\newcommand{\pr}[1]{#1^{\prime}}
\newcommand{\del}{\partial}
\newcommand{\mfrak}[1]{\mathfrak{#1}}
\newcommand{\mcal}[1]{\mathcal{#1}}
\newcommand{\mbb}[1]{\mathbb{#1}}
\newcommand{\mrm}[1]{\mathrm{#1}}
\newcommand{\xRightarrow}[2][]{\ext@arrow 0359\Rightarrowfill@{#1}{#2}}
\theoremstyle{plain}
\newtheorem{thm}{Theorem}[section]
\newtheorem*{thm*}{Theorem}
\newtheorem{lem}[thm]{Lemma}
\newtheorem{prop}[thm]{Proposition}
\theoremstyle{definition}
\newtheorem{defn}[thm]{Definition}
\newtheorem{prob}{Problem}
\theoremstyle{remark}
\newtheorem{rem}[thm]{Remark}
\newtheorem{exam}[thm]{Example}
\begin{document}

\title[Conformal welding problem]{Conformal welding problem, flow line problem, and multiple Schramm--Loewner evolution}
\author{Makoto Katori}
\address[Makoto KATORI]{Department of Physics, Faculty of Science and Engineering, Chuo University, Kasuga, Bunkyo-ku, Tokyo 112-8551, Japan}
\email{katori@phys.chuo-u.ac.jp}

\author{Shinji Koshida}
\address[Shinji KOSHIDA]{Department of Basic Science, The University of Tokyo, Komaba, Meguro, Tokyo 153-8902, Japan}
\email{koshida@vortex.c.u-tokyo.ac.jp}
\email{koshida@phys.chuo-u.ac.jp}

\begin{abstract}
A quantum surface (QS) is an equivalence class of pairs $(D,H)$ of simply connected domains $D\subsetneq\mbb{C}$
and random distributions $H$ on $D$ induced by the conformal equivalence for random metric spaces.
This distribution-valued random field is extended to a QS with $N+1$ marked boundary points (MBPs) with $N\in\mbb{Z}_{\ge 0}$.
We propose the conformal welding problem for it in the case of $N\in\mbb{Z}_{\ge 1}$.
If $N=1$, it is reduced to the problem introduced by Sheffield, who solved it by coupling the QS with the Schramm--Loewner evolution (SLE).
When $N \ge 3$, there naturally appears room of making the configuration of MBPs random,
and hence a new problem arises how to determine the probability law of the configuration.
We report that the multiple SLE in $\mathbb{H}$ driven by the Dyson model on $\mathbb{R}$ 
helps us to fix the problems and makes them solvable for any $N \ge 3$.
We also propose the flow line problem for an imaginary surface with boundary condition changing points (BCCPs).
In the case when the number of BCCPs is two, this problem was solved by Miller and Sheffield.
We address the general case with an arbitrary number of BCCPs in a similar manner to the conformal welding problem.
We again find that the multiple SLE driven by the Dyson model plays a key role
to solve the flow line problem.
\end{abstract}

\subjclass[2010]{60D05, 60J67, 82C22}
\keywords{Conformal welding problem, Flow line problem, Gaussian free field, Quantum surface with marked boundary points, Imaginary surface with boundary condition changing points, Multiple Schramm--Loewner evolution, Dyson model}

\maketitle


\section{Introduction}
\label{sect:intro}

{\it Gaussian free field} (GFF) \cite{Sheffield2007} in two dimensions gives a mathematically rigorous formulation 
of the free bose field, a model of two-dimensional conformal field theory (CFT) \cite{BelavinPolyakovZamolodchikov1984}.
It relies on the probability theory and, conceptually, realizes the path-integral over field configurations with weight defined through the action functional
\begin{equation*}
	S[h]=\int |\nabla h|^{2},
\end{equation*}
where $h$ is a real-valued field configuration on a two-dimensional domain.
A detailed formulation of GFF as CFT has been established in the booklet~\cite{KangMakarov2013}.

Besides a formulation of CFT, GFF also turns out to give a rich playground for random geometry.
In fact, an instance of GFF is regarded as a distribution on a domain, from which one extracts geometric data, typically, in two manners.
In the first one, we define a random metric on the domain by exponentiating the GFF.
Such defined random metric formulates the {\it Liouville quantum gravity} (LQG) \cite{DuplantierSheffield2011}, a model of two-dimensional quantum gravity,
the original idea of which was given by Polyakov \cite{Polyakov1981a,Polyakov1981b}.
In the other one, we consider a flow line of the vector field $e^{ih/\chi}$,
where we identify the two dimensional domain with one in the complex plane.
Compared to the LQG, this type of geometry is often called the {\it imaginary geometry} \cite{MillerSheffield2016a,MillerSheffield2016b,MillerSheffield2016c,MillerSheffield2017}.
Owing to the recent studies \cite{Dubedat2009,MillerSheffield2016a,Sheffield2016}, it has been clarified that both of these models of random geometry
are closely related to the theory of {\it Schramm--Loewner evolution} (SLE).

SLE was first introduced in ~\cite{Schramm2000} as a candidate for a stochastic model that describes a cluster interface
in a two-dimensional critical lattice model at a scaling limit, relying on the stochastic analogue of the Loewner theory \cite{Loewner1923, KufarevSobolevSporyseva1968}.
After its introduction, many authors studied SLE extensively revealing its properties \cite{RohdeSchramm2005} and the relation to lattice models \cite{Smirnov2001,ChelkakDuminil-CopinHonglerKemppainenSmirnov2014}.
Useful expositions of SLE can be found in ~\cite{Werner2004a,Lawler2009a}.

The relation between SLE and the LQG or the imaginary geometry is formulated by considering couplings of SLEs and GFFs.
Roughly speaking, a GFF perturbed by an appropriate harmonic function is shown to be stationary under the operation of cutting the domain along an SLE path.
Based on this fact, it can be argued \cite{Dubedat2009,MillerSheffield2016a,Sheffield2016} that an instance of GFF determines an SLE path compatibly to the LQG or the imaginary geometry.

Then it seems natural to ask how the coupling between an SLE and a GFF can be extended to the case of multiple SLE and how it can be interpreted in the context of the LQG or the imaginary geometry. In the present paper, we address this issue by considering the conformal welding problem for a quantum surface with multiple marked boundary points and the flow line problem for an imaginary surface with multiple boundary condition changing points. Consequently, we will find that the multiple SLE driven by a time change of the Dyson model plays an analogous role as the one that an SLE plays in the original works \cite{Sheffield2016,MillerSheffield2016a}.

The present paper is organized as follows:
In the following Sect.\ \ref{sect:preliminaries}, we make preliminaries on GFF and multiple SLE that are needed in the remaining part of the paper.
In Sect.\ \ref{sect:formulation_problem}, with a short review on the classical matter on the LQG and the imaginary geometry, we formulate our problems, the conformal welding problem and the flow line problem. We also present the main results Theorems \ref{thm:answer_welding} and \ref{thm:answer_flowline} there, which are proved in the succeeding Sects.\ \ref{sect:coupling_reverse} and \ref{sect:coupling_forward}, respectively.
In the final Sect.\ \ref{sect:concluding_remarks}, we discuss related topics and future directions.
In Appendix \ref{app:construction}, we give a construction of the spaces $\mcal{S}_{\gamma}$ and $\mcal{S}_{\gamma,N+1}$ of pre-quantum surfaces and ones with marked boundary points as orbifolds and investigate their structures in detail.
In Appendix \ref{app:driving_processes}, we summarize the approach in ~\cite{BauerBernardKytola2005} that formulated multiple SLE in relation to CFT and the analogous way of defining the reverse flow of multiple SLE.

\subsection*{Acknowledgments}
The authors would like to thank Takuya Murayama, Ikkei Hotta and Daiya Yamauchi for useful discussion.
The authors are also grateful to Sebastian Schleissinger for his valuable comments on the first version of the manuscript and Kalle Kyt{\"o}l{\"a} for discussions on multiple SLEs.
The present work was partially supported by the Research Institute for Mathematical Sciences (RIMS),
a Joint Usage/Research Center located in Kyoto University.
The authors thank Naotaka Kajino, Takashi Kumagai, and Daisuke Shiraishi for organizing the very fruitful workshop,
``RIMS Research Project: Gaussian Free Fields and Related Topics", held in 18-21 September 2018 at RIMS.
MK was supported by
the Grant-in-Aid for Scientific Research (C) (No.26400405),
(C) (No.19K03674),
(B) (No.18H01124), and
(S) (No.16H06338)
of Japan Society for the Promotion of Science (JSPS).
SK was supported by the Grant-in-Aid for JSPS Fellows (No. 17J09658, No. 19J01279).

\section{Preliminaries}
\label{sect:preliminaries}
This section is devoted to preliminaries on GFF and multiple SLE. In Subsect.\ \ref{subsect:GFF}, we review the notion of GFF both under Dirichlet boundary condition and free boundary condition, and in Subsect.\ \ref{subsect:multiple_SLE}, we fix a framework of multiple SLE starting from a Loewner chain for a family of multiple slit domains.
\subsection{Gaussian free field}
\label{subsect:GFF}
\subsubsection*{\bf Dirichlet boundary case}
Let $D \subsetneq\mbb{C}$ be a simply connected domain
and let $W^{\mrm{Dir}}(D)$ be the Hilbert space completion of the space $C_{0}^{\infty}(D)$ of smooth functions supported in $D$ with respect to the Dirichlet inner product
\begin{equation}
	\label{eq:Dirichlet_innter_product}
	(f,g)_{\nabla}=\frac{1}{2\pi}\int_{D} (\nabla f)(z)\cdot (\nabla g)(z)d\mu(z),\ \ f, g\in C^{\infty}_{0}(D),
\end{equation}
where $\mu$ is the Lebesgue measure on $D\subset\mbb{C}$; $d\mu(z)=\frac{\sqrt{-1}}{2}dzd\overline{z}$.
A GFF with Dirichlet boundary condition on $D$ is defined as an isometry 
$H^{\mrm{Dir}}_{D}:W^{\mrm{Dir}}(D)\to L^{2}(\Omega^{\mrm{Dir}}_{D},\mcal{F}^{\mrm{Dir}}_{D},\mbb{P}^{\mrm{Dir}}_{D})$
where $(\Omega^{\mrm{Dir}}_{D},\mcal{F}^{\mrm{Dir}}_{D},\mbb{P}^{\mrm{Dir}}_{D})$ is a probability space
such that each $(H^{\mrm{Dir}}_{D},\rho)_{\nabla}:=H^{\mrm{Dir}}_{D}(\rho)$, $\rho\in W^{\mrm{Dir}}(D)$, is a mean-zero Gaussian random variable \cite{Sheffield2007}.
One can construct such an isometry relying on the Bochner-Minlos theorem
that is an analogue of Bochner's theorem applicable to the case when the source Hilbert space is infinite dimensional ~\cite[Chapter 3]{Hida1980}.
It is also known that, in this construction, the sigma field $\mcal{F}^{\mrm{Dir}}_{D}$ is generated by
the image of $W^{\mrm{Dir}}(D)$ under $H^{\mrm{Dir}}_{D}$, i.e., $H^{\mrm{Dir}}_{D}$ is {\it full}.

Let us denote by $C^{\infty}_{0}(D)^{\prime}$ the space of distributions with test functions in $C^{\infty}_{0}(D)$ with respect to the topology of $W^{\mrm{Dir}}(D)$.
From a construction of a Dirichlet boundary GFF, for each $\omega\in \Omega^{\mrm{Dir}}_{D}$, the assignment
\begin{equation*}
	H^{\mrm{Dir}}_{D}(\omega):C^{\infty}_{0}(D)\to \mbb{C},\ \ \rho\mapsto (H^{\mrm{Dir}}_{D},\rho)_{\nabla}(\omega)
\end{equation*}
is continuous so that $H^{\mrm{Dir}}_{D}(\omega)\in C^{\infty}_{0}(D)^{\prime}$.
Therefore a GFF under Dirichlet boundary condition can be regarded as a random distribution
$H^{\mrm{Dir}}_{D}:\Omega^{\mrm{Dir}}_{D}\to C^{\infty}_{0}(D)^{\prime}$.

For $\rho\in C_{0}^{\infty}(D)$, we define $(H^{\mrm{Dir}}_{D},\rho):=2\pi(H^{\mrm{Dir}}_{D},(-\Delta)^{-1}\rho)_{\nabla}$.
Then we have
\begin{equation*}
	\mbb{E}[(H^{\mrm{Dir}}_{D},\rho_{1})(H^{\mrm{Dir}}_{D},\rho_{2})]=\int_{D\times D}\rho_{1}(z)G^{\mrm{Dir}}_{D}(z,w)\rho_{2}(w)d(\mu\otimes\mu)(z,w),
\end{equation*}
$\rho_{1},\rho_{2}\in C^{\infty}_{0}(D)$, where $G^{\mrm{Dir}}_{D}(z,w)$ is the Green function on $D$ with Dirichlet boundary condition.
For example, when $D$ is the upper half plane $\mbb{H}$,
$G^{\mrm{Dir}}_{\mbb{H}}(z,w)=-\log|z-w|+\log|z-\overline{w}|$.
When we regard the GFF as a random distribution on $D$,
it is reasonable to symbolically express $(H^{\mrm{Dir}}_{D},\rho)$ for $\rho\in C_{0}^{\infty}(D)$ as
\begin{equation*}
	(H^{\mrm{Dir}}_{D},\rho)=\int_{D}H^{\mrm{Dir}}_{D}(z)\rho(z)d\mu(z).
\end{equation*}
We understand the object $H^{\mrm{Dir}}_{D}(\cdot)=H^{\mrm{Dir}}_{D}(\cdot,\omega)$, $\omega\in \Omega^{\mrm{Dir}}_{D}$, in this sense.

\subsubsection*{\bf Free boundary case}
A GFF with free boundary condition is defined in a similar way, but on a different space of test functions.
Let $C_{\nabla}^{\infty}(D)$ be the space of smooth functions on $D$ whose gradients are compactly supported in $D$ and whose total-masses are zero: $\int f d\mu=0$, $f\in C_{\nabla}^{\infty}(D)$.
Then we denote the Hilbert space completion of $C_{\nabla}^{\infty}(D)$ with respect to the Dirichlet inner product defined by the same formula as (\ref{eq:Dirichlet_innter_product}) by $W^{\mrm{Fr}}(D)$.
A GFF with free boundary condition is defined as an isometry
$H^{\mrm{Fr}}_{D}:W^{\mrm{Fr}}(D)\to L^{2}(\Omega^{\mrm{Fr}}_{D},\mcal{F}^{\mrm{Fr}}_{D},\mbb{P}^{\mrm{Fr}}_{D})$,
where $(\Omega^{\mrm{Fr}}_{D},\mcal{F}^{\mrm{Fr}}_{D},\mbb{P}^{\mrm{Fr}}_{D})$ is a probability space.
This can also be regarded as a random distribution $H^{\mrm{Fr}}_{D}:\Omega^{\mrm{Fr}}_{D}\to C_{\nabla}^{\infty}(D)^{\prime}$,
where $C_{\nabla}^{\infty}(D)^{\prime}$ denote the space of distributions with test functions in $C_{\nabla}^{\infty}(D)$.
A significant difference of the free boundary GFF from the Dirichlet boundary one lies in the fact that the GFF with free boundary condition $H^{\mrm{Fr}}_{D}$ is regarded as a random distribution on $D$ modulo additive constants.
Since, in this paper, we only treat simply connected domains, it suffices to present
the GFF with free boundary condition on $\mbb{H}$ (see ~\cite{Sheffield2016,Berestycki2016,QianWerner2018}).
Let $\rho_{1}, \rho_{2}\in C_{\nabla}^{\infty}(\mbb{H})$ be test functions.
Then $(H^{\mrm{Fr}}_{\mbb{H}},\rho_{i})$, $i=1,2$ are mean-zero Gaussian variables with covariance
\begin{equation*}
	\mbb{E}[(H^{\mrm{Fr}}_{\mbb{H}},\rho_{1})(H^{\mrm{Fr}}_{\mbb{H}},\rho_{2})]=\int_{\mbb{H}\times\mbb{H}}\rho_{1}(z)G^{\mrm{Fr}}_{\mbb{H}}(z,w)\rho_{2}(w)d(\mu\otimes\mu)(z,w),
\end{equation*}
where $G^{\mrm{Fr}}_{\mbb{H}}(z,w)=-\log |z-w|-\log |z-\overline{w}|$.

\subsection{Multiple SLE}
\label{subsect:multiple_SLE}
We begin with recalling the result in ~\cite{RothSchleissinger2017} that dealt with the multiple-slit version of the Loewner theory \cite{Loewner1923,KufarevSobolevSporyseva1968}.
Let $\eta^{(i)}:(0,\infty) \to\mbb{H}$, $i=1,\dots, N$, be non-intersectiong curves in $\mbb{H}$
anchored on $\mbb{R}$: $\eta^{(i)}_{0}:=\lim_{t\to 0}\eta^{(i)}(t)\in \mbb{R}$, $i=1,\dots, N$.
We set $\mbb{H}^{\eta}_{t}:=\mbb{H}\backslash\left(\bigcup_{i=1}^{N}\eta^{(i)}(0,t]\right)$.
Then, at each time $t\in (0,\infty)$, there is a unique conformal mapping $g_{t}:\mbb{H}^{\eta}_{t}\to\mbb{H}$ under the hydrodynamic normalization (Fig.~\ref{fig:SLE_slits}):
\begin{equation*}
	g_{t}(z)=z+\frac{C(\mbb{H}^{\eta}_{t})}{z}+O(|z|^{-2}),\ \ z\to\infty.
\end{equation*}
The constant $C(\mbb{H}^{\eta}_{t})$ is called the half plane capacity of $\mbb{H}^{\eta}_{t}$.
Notice that, by changing the parametrization of the curves, we can take $C(\mbb{H}^{\eta}_{t})=2Nt$.
We call such a parametrization of curves a standard parametrization.

\begin{thm}[Alternative expression of {\cite[Theorem 1.1]{RothSchleissinger2017}}]
\label{thm:multple_Loewner}
Let $\eta^{(i)}:(0,\infty)\to\mbb{H}$, $i=1,\dots, N$, be non-intersectiong curves in $\mbb{H}$
anchored on $\mbb{R}$ with a standard parametrization.
There exists a unique set of continuous driving functions $\bm{X}_{t}=(X_{t}^{(1)},\dots, X_{t}^{(N)})\in \mbb{R}^{N}$, $t\in [0,\infty)$, such that 
the family of conformal mappings $g_{t}:\mbb{H}^{\eta}_{t}\to \mbb{H}$ solves the multiple Loewner equation:
\begin{equation}
\label{eq:multiple_Loewner}
	\frac{d}{dt}g_{t}(z)=\sum_{i=1}^{N}\frac{2}{g_{t}(z)-X^{(i)}_{t}},\ \ t\ge 0,\ \ \ g_{0}(z)=z\in\mbb{H},
\end{equation}
i.e., $\{g_{t}\}_{t\ge 0}$ is the Loewner chain driven by $\{\bm{X}_{t}:t\ge 0\}$.
Moreover, the driving functions are determined by $X^{(i)}_{t}=\lim_{z\to \eta^{(i)}(t), z\in\mbb{H}^{\eta}_{t}}g_{t}(z)$, $i=1,\dots, N$. 
\end{thm}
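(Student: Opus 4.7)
The plan is to derive the ODE and the driving functions by analyzing the infinitesimal evolution of $g_{t}$, following the standard philosophy of Loewner theory adapted to the multi-slit setting. Fix $t\ge 0$ and $h>0$ small, and consider the composition $\psi_{t,t+h}:=g_{t+h}\circ g_{t}^{-1}$. Then $\psi_{t,t+h}$ is the unique hydrodynamically normalized conformal map from $\mbb{H}\setminus K_{h}$ onto $\mbb{H}$, where $K_{h}=\bigcup_{i=1}^{N}g_{t}(\eta^{(i)}(t,t+h])$ consists of $N$ small slits emanating from the prospective driving points $X^{(i)}_{t}:=\lim_{z\to\eta^{(i)}(t),\, z\in\mbb{H}^{\eta}_{t}}g_{t}(z)$, which are well-defined by the conformal extension of $g_{t}$ to each slit tip. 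By additivity of the half-plane capacity along a composed Loewner chain together with the standard parametrization $C(\mbb{H}^{\eta}_{t})=2Nt$, the total capacity carried by $\psi_{t,t+h}$ equals $2Nh$.

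I would then invoke the Schwarz reflection principle together with the Herglotz representation of half-plane maps to write
\begin{equation*}
	\psi_{t,t+h}(z)-z = \int_{\mbb{R}}\frac{2\, d\mu^{(t,h)}(u)}{z-u}
\end{equation*}
for a positive finite measure $\mu^{(t,h)}$ on $\mbb{R}$ of total mass $Nh$, supported in a small neighborhood of the points $X^{(i)}_{t}$, $i=1,\dots,N$. As $h\downarrow 0$, the rescaled measures $\mu^{(t,h)}/h$ localize at the tips, and under the standard parametrization one obtains the weak limit $\sum_{i=1}^{N}\delta_{X^{(i)}_{t}}$. Dividing by $h$ and letting $h\to 0$ then yields the advertised ODE
\begin{equation*}
	\frac{d}{dt}g_{t}(z)=\sum_{i=1}^{N}\frac{2}{g_{t}(z)-X^{(i)}_{t}}.
\end{equation*}

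Continuity of $t\mapsto X^{(i)}_{t}$ follows from continuity of the curves $\eta^{(i)}$ combined with the Carath\'{e}odory convergence of the domains $\mbb{H}^{\eta}_{t}$, transported to the prime end representing the tip $\eta^{(i)}(t)$ via $g_{t}$. Uniqueness of the driving functions is then immediate: given any continuous $\bm{X}_{t}$ satisfying the multiple Loewner equation with initial condition $g_{0}(z)=z$, the Cauchy--Lipschitz theorem determines $g_{t}$ uniquely on its maximal domain, and the limit formula $X^{(i)}_{t}=\lim_{z\to\eta^{(i)}(t)}g_{t}(z)$ then fixes the individual components unambiguously.

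The main obstacle is the justification that the standard parametrization, specified only by the aggregate condition $C(\mbb{H}^{\eta}_{t})=2Nt$, forces each individual slit $g_{t}(\eta^{(i)}(t,t+h])$ to carry asymptotic capacity $2h$ as $h\downarrow 0$, equivalently that the limiting driving measure puts exactly unit mass at each tip rather than some time-dependent weight $w^{(i)}_{t}$ satisfying only $\sum_{i}w^{(i)}_{t}=N$. This requires a local-to-global capacity analysis together with a careful simultaneous reparametrization of the $N$ curves to equalize their instantaneous growth rates, and it constitutes the substantive content of the Roth--Schleissinger theorem cited in \cite{RothSchleissinger2017}; once it is granted, the reduction of the general integral Loewner equation to the point-mass form stated in the theorem is straightforward.
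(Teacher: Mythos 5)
The paper offers no proof of this theorem at all; the statement is labelled as an alternative expression of Roth and Schleissinger's result and imported directly from \cite{RothSchleissinger2017}. There is therefore no in-paper argument to compare your sketch against.

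Your outline follows the standard Loewner-theoretic route (compose $g_{t+h}\circ g_{t}^{-1}$, apply a Herglotz--Nevanlinna representation for the hydrodynamically normalized map of the multi-slit hull, localize the driving measure at the image tips, pass to the limit), and you correctly flag the crucial step as the one establishing that the limiting measure places exactly unit mass at each tip. However, one sentence inverts the logic in a way that matters. You write that the standard parametrization, specified ``only by the aggregate condition $C(\mbb{H}^{\eta}_{t})=2Nt$,'' \emph{forces} each slit $g_{t}(\eta^{(i)}(t,t+h])$ to carry asymptotic capacity $2h$. That is false as written: the aggregate condition alone leaves residual reparametrization freedom that redistributes capacity growth among the $N$ curves, and a generic parametrization satisfying only the aggregate condition produces a Loewner equation with time-dependent weights $w^{(i)}_{t}\ge 0$, $\sum_{i}w^{(i)}_{t}=N$, not constant weight $1$. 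What Roth and Schleissinger actually establish is that among all aggregate-normalized parametrizations there is a \emph{unique} one equalizing the instantaneous growth rates, and that one is what ``standard parametrization'' should be understood to mean. Your closing sentence, referring to ``a careful simultaneous reparametrization of the $N$ curves to equalize their instantaneous growth rates,'' says exactly this, so the inconsistency with the earlier ``forces'' claim is internal to your own write-up. Since the construction and uniqueness of that equalizing parametrization is precisely what your sketch defers to the cited theorem, what you have is a correct description of the proof's architecture, not a proof.
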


\begin{figure}[h]
\centering
\includegraphics[width=\hsize]{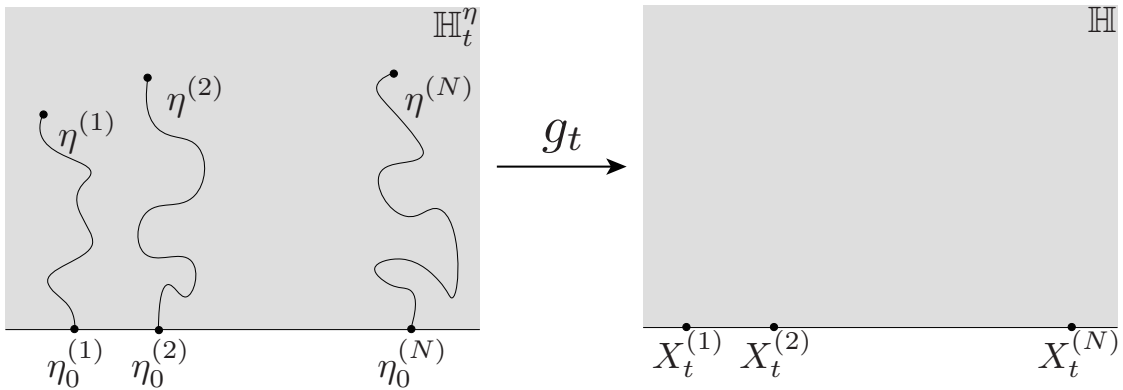}
\caption{Uniformization of the domain $\mbb{H}^{\eta}_{t}$ by the Loewner chain.}
\label{fig:SLE_slits}
\end{figure}

By virtue of the above theorem, an $N$-tuple of random slits in $\mbb{H}$ anchored on $\mbb{R}$
is converted to a stochastic process $\{\bm{X}_{t}\in\mbb{R}^{N}:t\ge 0\}$.
We assume that it solves a system of stochastic differential equations (SDEs)
\begin{equation}
	\label{eq:driving_process_Schramm}
	dX^{(i)}_{t}=\sqrt{\kappa}dB_{t}^{(i)}+F^{(i)}(\bm{X}_{t})dt,\ \  i=1,\dots, N,\ \ t\ge 0,
\end{equation}
where $\{ B_{t}^{(i)}:t\ge 0\}_{i=1}^{N}$ are mutually independent one-dimensional standard Brownian motions and $\{F^{(i)}(\bm{x})\}_{i=1}^{N}$
are suitable functions of $\bm{x}=(x_{1},\dots, x_{N})$ not explicitly dependent on $t$,
i.e., $\{\bm{X}_{t}:t\ge 0\}$ are of the Markovian type (see \cite[Eq. (2.11) in Chapter IV]{IkedaWatanabe1989}).
Then the solution $\{g_{t}\}_{t\ge 0}$ is just the multiple SLE considered in ~\cite{BauerBernardKytola2005,Graham2007}.
While, in ~\cite{BauerBernardKytola2005}, the set of driving processes was derived from a single {\it auxiliary function} in relation to CFT
as is summarized in Appendix \ref{app:driving_processes},
here we do not assume any CFT origin of the multiple SLE.
We write SLE$_{\kappa}$ if we need to specify the parameter $\kappa$.

\section{Formulation of problems and results}
\label{sect:formulation_problem}
In this section, we formulate the problems we address in this paper, the conformal welding problem and the flow line problem, and describe the results.
These two problems are, in fact, very similar in the sense that the only difference is the boundary condition of a GFF and solved analogously by means of multiple SLE, but have different origins in the random geometry. Thus we present them separately.
\subsection{Conformal welding problem}
\label{subsect:conformal_welding_problem}
\subsubsection*{\bf Liouville quantum gravity}
GFF plays a relevant role in constructing the LQG \cite{DuplantierSheffield2011} (see also ~\cite{RhodesVargas2017,DavidKupiainenRhodesVargas2016,HuangRhodesVargas2018}).
Following the original idea by Polyakov \cite{Polyakov1981a,Polyakov1981b}, it is expected that the object $e^{\gamma H^{\mrm{Fr}}_{D}(z)}d\mu(z)$
is the desired random area measure on $D$, where $\gamma\in (0,2)$.
This does not work, however, because each realization of $H^{\mrm{Fr}}_{D}$,
$h(\cdot)=H^{\mrm{Fr}}_{D}(\cdot,\omega)$, $\omega\in \Omega^{\mrm{Fr}}_{D}$, is not a function but a distribution on $D$,
thus its exponentiation has to be verified in some sense.
This difficulty is overcome by a certain regularization.
Let us fix a realization $h\in C_{\nabla}^{\infty}(D)^{\prime}$ and let $h_{\epsilon}(z)$ be the mean value of $h$ on the circle $\del B_{\epsilon}(z)$
of radius $\epsilon$ centered at $z \in D$ assuming that the distance from $z$ to the boundary is larger than $\epsilon$.
Then the required area measure is obtained by
\begin{equation*}
	d\mu^{\gamma}_{h}(z):=\lim_{\epsilon\to 0}\epsilon^{\gamma^{2}/2}e^{\gamma h_{\epsilon}(z)}d\mu(z),\ \ z\in D.
\end{equation*}
In a similar way, one can construct a linear measure on the boundary
\begin{equation*}
	d\nu^{\gamma}_{h}(x):=\lim_{\epsilon\to 0}\epsilon^{\gamma^{2}/4}e^{\gamma h_{\epsilon}(x)/2}d\nu(x),\ \ x\in \del D,
\end{equation*}
where $\nu$ is the Lebesgue measure on the boundary,
while, in this case, $h_{\epsilon}(x)$ is the average over the semi-circle centered at $x\in \del D$ of radius $\epsilon$ included in $D$.

Let $\tilde{D}\subsetneq\mbb{C}$ be another simply connected domain, and $\psi:\tilde{D}\to D$ be a conformal equivalence.
Then an area measure is induced on $\tilde{D}$ by pulling back the measure $\mu^{\gamma}_{h}$ on $D$.
That is, for a measurable set $A\subset \tilde{D}$, its area is computed as $\psi^{\ast}\mu^{\gamma}_{h}(A):=\mu_{h}^{\gamma}(\psi(A))$.
When we closely look at the pulled-back measure, we find that it can also be realized as $\mu^{\gamma}_{\tilde{h}}$
built from a distribution $\tilde{h}$ on $\tilde{D}$.
Indeed, by changing integration variables,
the area of $A\subset \tilde{D}$ with respect to the pulled-back measure $\psi^{\ast}\mu^{\gamma}_{h}$ becomes
\begin{equation*}
	\lim_{\epsilon\to 0}\int_{\psi(A)}\epsilon^{\gamma^{2}/2}e^{\gamma h_{\epsilon}(z)}d\mu(z)
	=\lim_{\epsilon \to 0}\int_{A}(|\pr{\psi}(w)|\epsilon)^{\gamma^{2}/2}e^{\gamma (h\circ \psi)_{\epsilon}(w)}|\pr{\psi}(w)|^{2}d\mu(w),
\end{equation*}
where $\pr{\psi}(w)=\frac{d\psi}{dw}(w)$.
Note that, in the right hand side, in which the integral is taken over $A\subset \tilde{D}$,
the regularization parameter $\epsilon$ has to be rescaled by $|\pr{\psi}(w)|$.
This implies that if we introduce a distribution on $\tilde{D}$ by $\tilde{h}=h\circ \psi +Q\log |\pr{\psi}|$
with the parametrization $Q=(\frac{\gamma^{2}}{2}+2)/\gamma=\frac{2}{\gamma}+\frac{\gamma}{2}$,
then the corresponding area measure $\mu^{\gamma}_{\tilde{h}}$ agrees with the pulled-back measure $\psi^{\ast}\mu^{\gamma}_{h}$.
It can be verified that the boundary measure also behaves correctly:
$\nu^{\gamma}_{\tilde{h}}(I)=\nu^{\gamma}_{h}(\psi(I))$ for a measurable $I\subset \del\tilde{D}$.
Motivated by this, we make the following definition:

\begin{defn}
Let $\gamma\in (0,2)$.
Pairs $(D_{1},h_{1})$ and $(D_{2},h_{2})$ of simply connected domains $D_{i}\subsetneq\mbb{C}$
and distributions $h_{i}\in C_{\nabla}^{\infty}(D_{i})^{\prime}$, $i=1,2$,
are said to be $\gamma$-equivalent if there exists a conformal equivalence  $\psi:D_{1}\to D_{2}$ such that
\begin{equation*}
	h_{1}=h_{2}\circ \psi + Q\log |\pr{\psi}|
\end{equation*}
holds, where $Q=\frac{2}{\gamma}+\frac{\gamma}{2}$.
\end{defn}

\begin{defn}[Pre-quantum surface]
Let $\gamma\in (0,2)$.
A {\it $\gamma$-pre-quantum surface} is an $\gamma$-equivalence class of pairs $(D,h)$ of simply connected domains $D\subsetneq\mbb{C}$
and distributions $h\in C_{\nabla}^{\infty}(D)^{\prime}$.
We denote the $\gamma$-equivalence class of $(D,h)$ by $[D,h]_{\gamma}$ and 
write the collection of $\gamma$-pre-quantum surfaces as
\begin{equation*}
	\mcal{S}_{\gamma}:=\left\{[D,h]_{\gamma}|D\subsetneq \mbb{C},h\in C_{\nabla}^{\infty}(D)^{\prime}\right\}.
\end{equation*}
\end{defn}

We will give the construction of $\mcal{S}_{\gamma}$ as an orbifold in Appendix \ref{app:construction}.

The quantization of $\gamma$-pre-quantum surfaces is carried out by randomizing them:
\begin{defn}[Quantum surface]
Let $\gamma\in (0,2)$.
A {\it $\gamma$-quantum surface} is a probability measure on $\mcal{S}_{\gamma}$.
Equivalently, a $\gamma$-quantum surface is a collection of pairs $(D,H_{D})$, where $D\subsetneq \mbb{C}$ is a simply connected domain
and $H_{D}$ is a $C_{\nabla}^{\infty}(D)^{\prime}$-valued random field subject to the condition
that, for all simply connected domains $D_{1}, D_{2}\subsetneq \mbb{C}$ and conformal equivalences $\psi:D_{1}\to D_{2}$,
the equality in probability law
\begin{equation}
\label{eq:quantum_equivalence}
	H_{D_{1}}\overset{\mrm{(law)}}{=}H_{D_{2}}\circ \psi+Q\log |\pr{\psi}|
\end{equation}
holds, where $Q=\frac{2}{\gamma}+\frac{\gamma}{2}$.
We write this collection as $[D,H_{D}]_{\gamma}$.
\end{defn}

\begin{rem}
If one has a pair $(D,H_{D})$ of a simply connected domain $D\subsetneq\mbb{C}$ and a $C_{\nabla}^{\infty}(D)^{\prime}$-valued
random field $H_{D}$, then it uniquely extends to a $\gamma$-quantum surface $[D,H_{D}]_{\gamma}$.
\end{rem}

\begin{exam}
A relevant example of a $\gamma$-quantum surface arises from the free boundary GFF.
A pair $(D,H_{D}^{\mrm{Fr}})$ of a simply connected domain $D\subsetneq\mbb{C}$
and the free boundary GFF $H_{D}^{\mrm{Fr}}$ on $D$ defines a quantum surface.
Indeed, the assignment $\omega\mapsto [D,H^{\mrm{Fr}}_{D}(\omega)]_{\gamma}$
gives an $\mcal{S}_{\gamma}$-valued random field on $\Omega^{\mrm{Fr}}_{D}$
and induces a probability measure on $\mcal{S}_{\gamma}$.
\end{exam}

\subsubsection*{\bf Quantum surface with marked boundary points}
In order to formulate and address the conformal welding problem, we define a {\it quantum surface with marked boundary points}, which has also been introduced by an earlier literature ~\cite{DuplantierMillerSheffield2014}.
It is a refined version of a quantum surface decorated by data of boundary points.
Let $D\subsetneq \mbb{C}$ be a simply connected domain.
For $N\in\mbb{Z}_{\ge 0}$, we define the configuration space for $N+1$ ordered boundary points of $D$ as
\begin{equation*}
	\mrm{Conf}^{<}_{N+1}(\del D)=\left\{(x_{1},\dots, x_{N+1})\in (\del D)^{N+1}|\substack{x_{i}\neq x_{j},\ i\neq j\\ \mbox{\tiny aligned counterclockwise}}\right\},
\end{equation*}
where $\del D=\overline{D}\backslash D$ is the boundary of $D$.

\begin{defn}
Let $\gamma\in (0,2)$, $N\in\mbb{Z}_{\ge 0}$ and
let $(D,h_{D},(x^{(1)}_{D},\dots, x^{(N+1)}_{D}))$ be a triple consisting of
\begin{itemize}
\item A simply connected domain $D\subsetneq \mbb{C}$.
\item A distribution $h_{D} \in C_{\nabla}^{\infty}(D)^{\prime}$.
\item An $(N+1)$-tuple of ordered boundary points $(x^{(1)}_{D},\dots, x^{(N+1)}_{D})\in\mrm{Conf}^{<}_{N+1}(\del D)$.
\end{itemize}
Triples $(D_{1},h_{D_{1}},(x^{(1)}_{D_{1}},\dots, x^{(N+1)}_{D_{1}}))$ and $(D_{2},h_{D_{2}},(x^{(1)}_{D_{2}},\dots, x^{(N+1)}_{D_{2}}))$
are said to be $\gamma$-equivalent if there exists a conformal equivalence $\psi:D_{1}\to D_{2}$ such that
$\psi(x^{(i)}_{D_{1}})=x^{(i)}_{D_{2}}$, $i=1,\dots, N+1$ and the following identity among distributions holds:
\begin{equation*}
	h_{D_{1}}=h_{D_{2}} \circ\psi +Q\log|\pr{\psi}|,
\end{equation*}
where $Q=\frac{2}{\gamma}+\frac{\gamma}{2}$.
\end{defn}

\begin{defn}[Pre-quantum surface with marked boundary points]
Let $\gamma\in (0,2)$, $N\in\mbb{Z}_{\ge 0}$.
A {\it $\gamma$-pre-quantum surface with $N+1$ marked boundary points} is a $\gamma$-equivalence class of triples
$(D,h_{D},(x^{(1)}_{D},\dots, x^{(N+1)}_{D}))$ of simply connected domains $D\subsetneq \mbb{C}$,
distributions $h_{D}\in C_{\nabla}^{\infty}(D)^{\prime}$,
and ordered boundary points $(x^{(1)}_{D},\dots, x^{(N+1)}_{D})\in\mrm{Conf}^{<}_{N+1}(\del D)$.
We denote the $\gamma$-equivalence class of $(D,h_{D},(x^{(1)}_{D},\dots, x^{(N+1)}_{D}))$ as $[D,h_{D},(x^{(1)}_{D},\dots, x^{(N+1)}_{D})]_{\gamma}$
and write the collection of $\gamma$-pre-quantum surfaces with $N+1$ marked boundary points as $\mcal{S}_{\gamma,N+1}$.
\end{defn}
We will give the construction of $\mcal{S}_{\gamma ,N+1}$ as an orbifold in Appendix \ref{app:construction}.

\begin{defn}[Quantum surface with marked boundary points]
\label{def:qsurf_boundary}
Let $\gamma\in (0,2)$ and $N\in\mbb{Z}_{\ge 0}$.
A $\gamma$-quantum surface with $N+1$ marked boundary points is a probability measure on $\mcal{S}_{\gamma,N+1}$.
Equivalently, a $\gamma$-quantum surface with $N+1$ marked boundary points is a collection of triples $(D,H_{D},(X_{D}^{(1)},\dots, X_{D}^{(N+1)}))$,
where $D\subsetneq\mbb{C}$ is a simply connected domain
and $(H_{D},(X^{(1)}_{D},\dots, X^{(N+1)}_{D}))$ is a $(C_{\nabla}^{\infty}(D)^{\prime}\times \mrm{Conf}^{<}_{N+1}(\del D))$-valued random field
subject to the condition that, for all simply connected domains $D_{1},D_{2}\subsetneq \mbb{C}$ and conformal equivalences $\psi:D_{1}\to D_{2}$,
\begin{equation*}
	(H_{D_{1}},(X^{(1)}_{D_{1}},\dots, X^{(N+1)}_{D_{1}}))\overset{\mrm{(law)}}{=}(H_{D_{2}}\circ \psi+Q\log |\pr{\psi}|,(\psi^{-1}(X^{(1)}_{D_{2}}),\dots,\psi^{-1}(X^{(N+1)}_{D_{2}})))
\end{equation*}
holds, where $Q=\frac{2}{\gamma}+\frac{\gamma}{2}$.
We write this collection as $[D,H_{D},(X^{(1)}_{D},\dots, X^{(N+1)}_{D})]_{\gamma}$.
\end{defn}

The relevant example of $\gamma$-quantum surfaces with marked boundary points in the present paper is of the {\it standard type} defined as follows:
We consider the following space:
\begin{equation}
\label{eq:def_rot_space}
	\widetilde{\mcal{S}^{\mrm{Rot}}_{\gamma,N+1}}(\mbb{H}):= C_{\nabla}^{\infty}(\mbb{H})^{\prime}\times \mrm{Conf}^{<}_{N}(\mbb{R}),
\end{equation}
where $\mrm{Conf}^{<}_{N}(\mbb{R}):=\{(x_{1},\dots, x_{N})\in \mbb{R}^{N}|x_{1}<\cdots <x_{N}\}$.
Another presentation of this space contained in Appendix \ref{app:construction} will motivate the superscript ``Rot".
Suppose that a probability space $\left(\mrm{Conf}^{<}_{N}(\mbb{R}),\mcal{F}_{N},\mbb{P}_{N}\right)$ is given.
It is equivalent to a random $N$-point configuration $\bm{X}=(X_{1},\dots, X_{N})$ on $\mbb{R}$
defined by $X_{i}:\mrm{Conf}^{<}_{N}(\mbb{R})\to \mbb{R}$; $(x_{1},\dots,x_{N})\mapsto x_{i}$, $i=1,\dots, N$.
Let $\alpha=(\alpha_{1},\cdots,\alpha_{N})$ be an $N$-tuple of real numbers.
For given $N$ points $\bm{x}=(x_{1},\dots ,x_{N})\in \mrm{Conf}^{<}_{N}(\mbb{R})$, we define a function on $\mbb{H}$
\begin{equation}
	\label{eq:function_standard}
	u_{\mbb{H}}^{\bm{x},\alpha}(z)=\sum_{i=1}^{N}\alpha_{i}\log|z-x_{i}|.	
\end{equation}
Here $z$ is the standard coordinate on $\mbb{H}$ embedded in $\mbb{C}$.
Then the assignment
\begin{equation}
\label{eq:pre-standard-field}
	(H^{\bm{X},\alpha}_{\mbb{H}},\bm{X}):\Omega^{\mrm{Fr}}_{\mbb{H}}\times \mrm{Conf}^{<}_{N}(\mbb{R})\ni(\omega,\bm{x})\mapsto (H^{\mrm{Fr}}_{\mbb{H}}(\omega)+u_{\mbb{H}}^{\bm{x},\alpha},\bm{x})\in\widetilde{\mcal{S}^{\mrm{Rot}}_{\gamma,N+1}}(\mbb{H})
\end{equation}
gives an $\widetilde{\mcal{S}^{\mrm{Rot}}_{\gamma,N+1}}(\mbb{H})$-valued random field on
$\Omega_{\mbb{H},N}^{\mrm{Fr}}:=\Omega^{\mrm{Fr}}_{\mbb{H}}\times\mrm{Conf}^{<}_{N}(\mbb{R})$
equipped with the product probability measure $\mbb{P}^{\mrm{Fr}}_{\mbb{H}}\otimes \mbb{P}_{N}$.
We denote the probability measure on $\mcal{S}_{\gamma,N+1}$ induced along the surjection
\begin{equation}
\label{eq:surj_rot_to_surf}
\pi^{\infty}_{\gamma,N+1}:\widetilde{\mcal{S}^{\mrm{Rot}}_{\gamma,N+1}}\twoheadrightarrow \mcal{S}_{\gamma,N+1},\ \ \  (h,\bm{x})\mapsto [\mbb{H},h,(\bm{x},\infty)]_{\gamma}
\end{equation}
 by $\mbb{P}_{\gamma,N+1}^{\bm{X},\alpha}$.

\begin{defn}[Standard type]
Let $\alpha=(\alpha_{1},\dots,\alpha_{N})\in\mbb{R}^{N}$
and let $\bm{X}=(X_{1},\dots, X_{N})$ be a $\mrm{Conf}^{<}_{N}(\mbb{R})$-valued random variable.
The associated probability measure $\mbb{P}^{\bm{X},\alpha}_{\gamma,N+1}$ on $\mcal{S}_{\gamma,N+1}$ constructed above
is called a $\gamma$-quantum surface with $N+1$ marked boundary points of the $(\bm{X},\alpha)$-standard type.
Equivalently, a $\gamma$-quantum surface with $N+1$ marked boundary points of the $(\bm{X},\alpha)$-standard type
is a collection of the form $[\mbb{H},H^{\bm{X},\alpha}_{\mbb{H}},(\bm{X},\infty)]_{\gamma}$, where $H^{\bm{X},\alpha}_{\mbb{H}}=H^{\mrm{Fr}}_{\mbb{H}}+u^{\bm{X},\alpha}_{\mbb{H}}$.
\end{defn}

By definition, the random variable $\bm{X}$ and the random field $H^{\mrm{Fr}}_{\mbb{H}}$ are independent.
They are combined when constructing the probability measure $\mbb{P}^{\bm{X},\alpha}_{\gamma,N+1}$
giving rise to a $C_{\nabla}^{\infty}(\mbb{H})^{\prime}$-valued random field 
$H^{\bm{X},\alpha}_{\mbb{H}}=H^{\mrm{Fr}}_{\mbb{H}}+u^{\bm{X},\alpha}_{\mbb{H}}$ depending on 
the random boundary points $\bm{X}$ taking values in $\mrm{Conf}^{<}_{N}(\mbb{R})$.

Note that the function $u_{\mbb{H}}^{\bm{x},\alpha}$ defined by Eq.\ (\ref{eq:function_standard}) is harmonic 
with logarithmic singularities at the $(N+1)$-st point $\infty$
as well as at points $x_{i}\in\mbb{R}$, $i=1,\dots, N$.
Indeed, introducing a coordinate $w=1/z$ vanishing at $\infty$,
we see that $u_{\mbb{H}}^{\bm{x},\alpha}(1/w)\sim -\sum_{i=}^{N}\alpha_{i}\log|w|$ as $w\to 0$.

In the rest of the present paper, we will abbreviate a $\gamma$-(pre-)quantum surface
with $N+1$ marked boundary points as a $\gamma$-(pre-)QS-$(N+1)$-MBPs.

\label{subsec:conformal_welding}
Now we propose the conformal welding problem for a $\gamma$-QS-$(N+1)$-MBPs of the $(\bm{X},\alpha)$-standard type
with $N\in\mbb{Z}_{\ge 1}$ as follows:
\begin{prob}
\label{prob:1}
Let $\mbb{P}^{\bm{X},\alpha}_{\gamma,N+1}$, $\gamma\in (0,2)$, $N\in\mbb{Z}_{\ge 1}$, be a $\gamma$-QS-$(N+1)$-MBPs
of the $(\bm{X},\alpha)$-standard type.
Then for each realization $(h,\bm{x})\in \widetilde{\mcal{S}_{\gamma,N+1}^{\mrm{Rot}}}(\mbb{H})$ of $(H^{\bm{X},\alpha}_{\mbb{H}},\bm{X})$, find a member
\begin{equation*}
	\left(\mbb{H}\Big\backslash\bigcup_{i=1}^{N}\eta^{(i)}(0,1],\tilde{h}, \left(\eta^{(1)}(1),\dots\eta^{(N)}(1),\infty\right)\right)
\end{equation*}
in the equivalence class $[\mbb{H},h,(\bm{x},\infty)]_{\gamma}$
such that $\eta^{(i)}:(0,1]\to\mbb{H}$, $i=1,\dots,N$ are non-intersecting slits in $\mbb{H}$
satisfying the following conditions:
\begin{enumerate}
\item 	The slits are {\it anchored} on the real axis: $\eta^{(i)}_{0}:=\lim_{t\to 0}\eta^{(i)}(t)\in\mbb{R}$, $i=1,\dots, N$.
\item	The slits are {\it seams}:
	\begin{equation*}
		\nu^{\gamma}_{\tilde{h}}(\eta^{(i)}(0,t]_{\mrm{L}})=\nu^{\gamma}_{\tilde{h}}(\eta^{(i)}(0,t]_{\mrm{R}}),\ \ t\in (0,1],\ \ i=1,\dots, N.
	\end{equation*}
	Here $\eta^{(i)}(0,t]_{\mrm{L}}$ (resp. $\eta^{(i)}(0,t]_{\mrm{R}}$) is the boundary segment lying on the left (resp. right) of
	the slit $\eta^{(i)}(0,t]$.
\end{enumerate}
\end{prob}

To explain its geometric meaning, suppose that Problem \ref{prob:1} was solved.
For each realization $H^{\bm{x},\alpha}_{\mbb{H}}(\omega)$ of $H^{\bm{X},\alpha}_{\mbb{H}}$,
let $\psi:\mbb{H}\to \mbb{H}\backslash\bigcup_{i=1}^{N}\eta^{(i)}(0,1]$ be a conformal equivalence.
Write $z_{i}^{-}$ and $z_{i}^{+}$ for the points on the real axis such that $z_{i}^{-}<x_{i}<z_{i}^{+}$ and $\psi (z_{i}^{\pm})=\eta^{(i)}_{0}$, $i=1,\dots, N$.
Then the conformal mapping $\psi$ glues the intervals $[z_{i}^{-},x_{i}]$ and $[x_{i},z_{i}^{+}]$
by means of the boundary measure $\nu^{\gamma}_{H^{\bm{x},\alpha}_{\mbb{H}}(\omega)}$ (see Fig.~\ref{fig:welding_problem}).

\begin{figure}[h]
\centering
\includegraphics[width=\hsize]{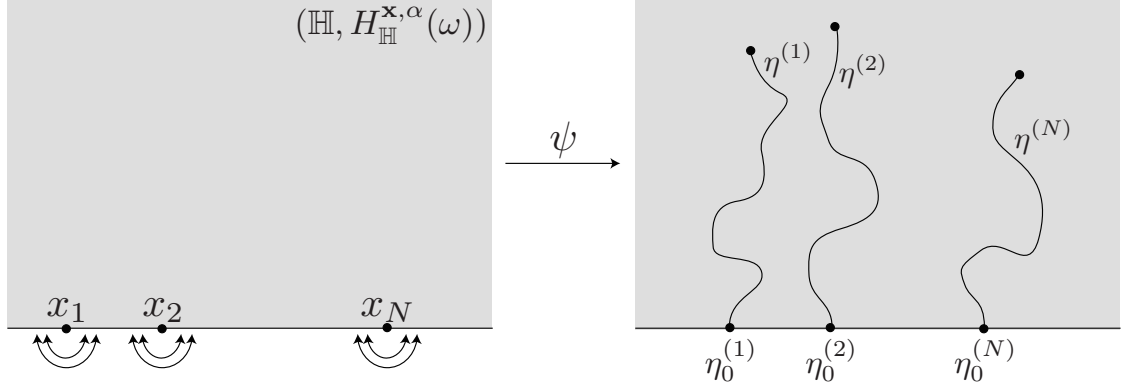}
\caption{The conformal welding problem.}
\label{fig:welding_problem}
\end{figure}

As a solution to Problem \ref{prob:1}, a statistical ensemble of slits $\{\eta^{(i)}\}_{i=1}^{N}$ is obtained.
Then, we would like to ask the probability law for the resulting curves:
\begin{prob}
\label{prob:2}
Determine the probability law for the slits $\{\eta^{(i)}\}_{i=1}^{N}$.
\end{prob}
Note that the anchor points of these curves on $\mbb{R}$ are also random variables: $\eta^{(i)}_{0}=\eta^{(i)}_{0}(\omega,\bm{x})$, $i=1,\dots, N$,
while this seems to prevent us from capturing the ensemble of slits.
Thus we also set the following problem as a sub-problem of Problem \ref{prob:2}:
\begin{prob}
\label{prob:3}
Find a $\gamma$-QS-$(N+1)$-MBPs of the $(\bm{X},\alpha)$-standard type such that the anchor points $\{\eta^{(i)}_{0}\}_{i=1}^{N}$ are deterministic.
\end{prob}

In the case of $N=1$, this problem reduces to the one addressed by Sheffield \cite{Sheffield2016}.
The space $\mcal{S}_{\gamma,N+1}$ is fibered over $\mrm{Aut}(\mbb{H})\backslash\mrm{Conf}^{<}_{N+1}(\del\mbb{H})$
by forgetting the distributions (see Appendix \ref{app:construction}).
Owing to the fact that the space $\mrm{Aut}(\mbb{H})\backslash\mrm{Conf}^{<}_{2}(\del\mbb{H})=\{[0,\infty]\}$ consists of a single element,
Problem \ref{prob:3} becomes trivial in this case.
We could say that Sheffield \cite{Sheffield2016} addressed Problems \ref{prob:1} and \ref{prob:2} for a $\gamma$-QS-$2$-MBP
of the $(\bm{X},\alpha)$-standard type with $\alpha_{1}=\frac{2}{\gamma}$
and that he found a one-parameter family of solutions to Problem \ref{prob:1};
the reverse flow of an SLE gives the required conformal equivalences.
Consequently, the resulting curve obeys the probability law of the one for the SLE of parameter $\kappa=\gamma^{2}$
as the solution to Problem \ref{prob:2}.

\subsubsection*{\bf Result}
To present a result, let us recall the Dyson model.
In the usual convention, the Dyson model of parameter $\beta >0 $ is the system of SDEs on
$\{\bm{X}^{\mrm{D}_{\beta}}_{t}=(X^{\mrm{D}_{\beta}(1)}_{t},\dots, X^{\mrm{D}_{\beta}(N)}_{t})\in\mbb{R}^{N}:t\ge 0\}$ \cite{Dyson1962,Katori2015}:
\begin{equation}
	\label{eq:Dyson_model}
	dX^{\mrm{D}_{\beta}(i)}_{t}=dB^{(i)}_{t}+\frac{\beta}{2}\sum_{\substack{j=1\\j\neq i}}^{N}\frac{1}{X^{\mrm{D}_{\beta}(i)}_{t}-X^{\mrm{D}_{\beta}(j)}_{t}}dt,\ \ t\ge 0,\ \ i=1,\dots, N.
\end{equation}
It is known that when $\beta \ge 1$, the Dyson model with an arbitrary finite number of particles $N \in\mbb{Z}_{\ge 2}$
has a strong and pathwise unique non-colliding solution for general initial conditions \cite{RogersShi1993,CepaLepingle1997,GraczykMalecki2013, GraczykMalecki2014}.
The non-colliding condition $\beta \ge 1$ for the Dyson model corresponds to $\kappa \in (0, 8 ] $ through the relation $\beta=8/\kappa$.
See Section \ref{subsec:dyson_model} for more detail on the relations among parameters.

We obtain a solution to Problems \ref{prob:1} to \ref{prob:3} for $\gamma$-QSs-$(N+1)$-MBPs as follows:
\begin{thm}
\label{thm:answer_welding}
Let $\gamma=\sqrt{\kappa}\in (0,2)$ and $N\in\mbb{Z}_{\ge 1}$.
Suppose that $\{\bm{X}_{t}=(X_{t}^{(1)},\dots, X_{t}^{(N)}):t\ge 0\}$ is a time change of the Dyson model
$\{\bm{X}^{\mrm{D}_{8/\kappa}}_{\kappa t}:t\ge 0\}$
starting at a deterministic initial state $\bm{X}_{0}=\bm{x}\in\mrm{Conf}^{<}_{N}(\mbb{R})$.
Then, at each time $T\in (0,\infty)$, the conformal welding problem for $\left[\mbb{H}, H^{\bm{X}_{T},\alpha}_{\mbb{H}},(\bm{X}_{T},\infty)\right]_{\gamma}$
with $(\alpha_{1},\dots,\alpha_{N})=(\frac{2}{\gamma},\dots,\frac{2}{\gamma})$ is solved as follows:
\begin{enumerate}
\item 	The solution of the Loewner equation (\ref{eq:multiple_Loewner}) driven by the time change of the Dyson model 
		$\{\bm{X}^{\mrm{D}_{8/\kappa}}_{\kappa t}:0\le t \le T\}$ gives a solution to Problem \ref{prob:1}.
		In other words, $g_{T}^{-1}:\mbb{H} \to \mbb{H}^{\eta}_{T}$ is the desired conformal equivalence.
\item 	The probability law for resulting slits $\{\eta^{(i)}\}_{i=1}^{N}$ is the one for the multiple SLE$_{\kappa}$.
		This gives a solution to Problem \ref{prob:2}.
\item 	Problem \ref{prob:3} is answered positively with $\bm{\eta}_{0}=\bm{x}$ a.s.
\end{enumerate}
\end{thm}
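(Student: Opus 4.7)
The plan is to extend Sheffield's reverse-flow coupling argument for $N=1$ in \cite{Sheffield2016} to the multi-slit setting. The key is that the Dyson interaction with $\beta = 8/\kappa$ supplies precisely the extra drift needed to make the field $H_{\mbb{H}}^{\bm{X}_t,\alpha}$ stationary, in an appropriate sense, under the multiple Loewner flow driven by $\bm{X}_t$. This stationarity will translate directly into the welding (i.e.\ seam) identity.

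By Theorem \ref{thm:multple_Loewner}, the multiple Loewner chain $\{g_t\}$ driven by the time-changed Dyson process $\{\bm{X}_t\}$ produces non-intersecting slits $\{\eta^{(i)}\}_{i=1}^N$ anchored at the deterministic initial configuration $\bm{x}$, which is item (3). Item (2) follows directly from the construction: the slits are by definition traced by the Loewner chain driven by the time-changed Dyson process, which we take as the definition of multiple SLE$_\kappa$ (cf.\ Subsect.\ \ref{subsect:multiple_SLE}). The substantive content is item (1). The natural candidate for the welding map is $g_T^{-1}:\mbb{H}\to\mbb{H}_T^\eta$ with
\[ \tilde{h} := H_{\mbb{H}}^{\bm{X}_T,\alpha}\circ g_T + Q\log|\pr{g_T}|, \]
so that $\left[\mbb{H}, H_{\mbb{H}}^{\bm{X}_T,\alpha},(\bm{X}_T,\infty)\right]_\gamma = \left[\mbb{H}_T^\eta,\tilde{h},(\bm{\eta}(T),\infty)\right]_\gamma$ is automatic. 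What remains is condition (2) of Problem \ref{prob:1}, the equality of $\nu^\gamma_{\tilde h}$-lengths on the two sides of each slit.

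To establish the seam condition I would exploit a coupling induced by Itô calculus. Consider the time-dependent distribution
\[ h_t(z) := H_{\mbb{H}}^{\mrm{Fr}}\bigl(g_t(z)\bigr) + Q\log|\pr{g_t}(z)| + u_{\mbb{H}}^{\bm{X}_t,\alpha}\bigl(g_t(z)\bigr),\qquad z\in\mbb{H}_t^\eta, \]
and apply Itô's formula to the pairing $(h_t,\varphi)$ with a fixed smooth test function $\varphi$ supported in $\mbb{H}_T^\eta$. The deterministic drift receives contributions from (i) differentiating $Q\log|\pr{g_t}|$ along the Loewner equation (\ref{eq:multiple_Loewner}), (ii) differentiating $u_{\mbb{H}}^{\bm{X}_t,\alpha}\circ g_t$ in $z$ (through $g_t$) and in $\bm{X}_t$ (through the Dyson SDE (\ref{eq:driving_process_Schramm})), and (iii) the Itô correction from the quadratic variations $d\langle X^{(i)},X^{(i)}\rangle_t = \kappa\,dt$. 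A direct calculation should show that all these drifts cancel exactly when $\alpha_i=2/\gamma$ for all $i$, $Q = 2/\gamma + \gamma/2$, and the driving process is the time-rescaled Dyson model with $\beta = 8/\kappa$, leaving a pure martingale. The resulting stationarity of the associated quantum surface on $\mcal{S}_{\gamma,N+1}$, combined with the fact that on each slit both boundary sides project under $g_t$ to the single point $X_t^{(i)}\in\mbb{R}$, then forces the two pre-image intervals on $\mbb{R}$ to carry equal $\nu^\gamma_{\tilde h}$-length, which is the seam condition.

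The principal obstacle will be the Itô drift computation. In Sheffield's $N=1$ setting the driving is a plain scalar Brownian motion, no cross-slit interactions arise, and the cancellation reduces to matching $Q$ with $\alpha$ and $\gamma$. For $N\ge 2$, however, the Loewner equation (\ref{eq:multiple_Loewner}) introduces cross-terms $\frac{2}{g_t(z)-X_t^{(j)}}$ in $d\log|g_t(z)-X_t^{(i)}|$ for every $j\ne i$, and these would violate stationarity unless absorbed into an extra deterministic drift of $X_t^{(i)}$. The Dyson repulsion with $\beta=8/\kappa$ is precisely what achieves this absorption after the time rescaling, and verifying the cancellation — while simultaneously keeping track of the distinction between Dirichlet and free boundary inner products and justifying all manipulations at the level of distributions — constitutes the technical core of the argument.
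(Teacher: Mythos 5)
Your high-level strategy — interpolate between the initial field and the welded field, apply It\^o calculus, and let the Dyson drift cancel the extra terms — matches the paper's, and your disposition of items (2) and (3) agrees with the paper's treatment via the cutting operation. But there is a concrete gap in the It\^o computation underlying item (1): you propose to differentiate
\begin{equation*}
h_t(z)=H^{\mrm{Fr}}_{\mbb{H}}(g_t(z))+Q\log|\pr{g_t}(z)|+u^{\bm{X}_t,\alpha}_{\mbb{H}}(g_t(z)),\qquad z\in\mbb{H}^\eta_t,
\end{equation*}
along the \emph{forward} Loewner flow (\ref{eq:multiple_Loewner}). This does not give drift cancellation with $\alpha_i=2/\gamma$. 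Indeed, in forward time one has $d\log g_t'(z)=-\sum_i\frac{2\,dt}{(g_t(z)-X^{(i)}_t)^2}$, and collecting the coefficient of $(g_t(z)-X^{(i)}_t)^{-2}$ in the drift of the harmonic part yields $\tfrac{2}{\gamma}\cdot 2-\tfrac{2}{\gamma}\cdot\tfrac{\kappa}{2}-2Q=\tfrac{4}{\gamma}-\gamma-2Q=-2\gamma\neq 0$ (using $\kappa=\gamma^2$, $Q=\tfrac{2}{\gamma}+\tfrac{\gamma}{2}$); this $i$-diagonal residue cannot be absorbed by the Dyson repulsion, which only contributes off-diagonal $(g_t(z)-X^{(i)}_t)^{-1}(X^{(i)}_t-X^{(j)}_t)^{-1}$ terms. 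The paper instead works with the \emph{reverse} flow $f^T_t$ of (\ref{eq:reverse_multiple_SLE}) driven by the time-reversed processes $Y^{(i)}_{T;t}=X^{(i)}_{T-t}$, in which the sign of the Loewner vector field flips; then $d\log f^{T\prime}_t(z)=+\sum_i\frac{2\,dt}{(f^T_t(z)-Y^{(i)}_{T;t})^2}$ and the diagonal coefficient becomes $-\tfrac{4}{\gamma}-\gamma+2Q=0$, giving Lemma \ref{lem:key_martingale}. (In the forward direction the martingale constraint would force $\alpha_i=\tfrac{4Q}{4-\kappa}\neq\tfrac{2}{\gamma}$, which is not what the theorem asserts.)

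The reverse flow also fixes a second issue implicit in your sketch. The martingale $\mfrak{h}_t$ can only \emph{gain} variance, so its quadratic variation must be compensated by a \emph{decrease} in the GFF variance $E^{\mrm{Fr}}_t(\rho)$. Under the reverse flow, $G^{\mrm{Fr}}_{\mbb{H}}(f^T_t(z),f^T_t(w))$ is non-increasing (the points move away from $\mbb{R}$), which is exactly what the paper uses to conclude that $(\mfrak{p}_T,\rho)$ and $(\mfrak{p}_0,\rho)$ are both centred at $(\mfrak{h}_0,\rho)$ with variance $E^{\mrm{Fr}}_0(\rho)$; under the forward flow the free-boundary Green-function energy of $\varphi$ on the shrinking domain $\mbb{H}^\eta_t$ increases, running the bookkeeping the wrong way. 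Finally, your justification of the seam condition — that ``both boundary sides project under $g_t$ to the single point $X_t^{(i)}$'' — is inaccurate (only the tip maps to $X^{(i)}_t$; the two banks map to adjacent intervals of $\mbb{R}$); the correct observation, used in the paper's cutting-operation argument, is that both banks of $\eta^{(i)}(0,t]$ are the \emph{same} arc in $\mbb{H}$, so they trivially carry equal $\nu^\gamma_{H}$-measure before uniformization. You should reorganize around the reverse flow and the cutting operation as in Proposition \ref{prop:stationarity_welding}.
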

This theorem will be proved in Sect.\ \ref{sect:coupling_reverse}.

\subsection{Flow line problem}
Another topic in random geometry that stems from GFF is the imaginary geometry \cite{MillerSheffield2016a,MillerSheffield2016b,MillerSheffield2016c,MillerSheffield2017},
which sees the flow line of the vector field $e^{\sqrt{-1}H/\chi}$, where $H$ is a $C^{\infty}_{0}(D)^{\prime}$-valued random field
and $\chi>0$ is a parameter.
Let us temporarily suppose that $h$ were a smooth function on $D\subsetneq \mbb{C}$. 
Then $e^{\sqrt{-1}h/\chi}$ is a smooth vector field and its flow line $\eta:[0,\infty)\to D$ starting at $x_{0}\in D$
is defined as the solution of the ordinary differential equation
\begin{equation*}
	\frac{d\eta(t)}{dt}=e^{\sqrt{-1}h(\eta(t))/\chi},\ \ t\ge 0,\ \ \ \eta(0)=x_{0}\in D.
\end{equation*}
For another simply connected domain $\tilde{D}$ and a conformal equivalence $\psi:\tilde{D}\to D$,
we can consider the pull-back $\tilde{\eta}=\psi^{-1}\circ \eta$ of the flow line $\eta$ by $\psi$.
Then $\tilde{\eta}$ satisfies the following differential equation:
\begin{equation*}
	\frac{d\tilde{\eta}(t)}{dt}=\frac{1}{|\pr{\psi}(\tilde{\eta}(t))|}e^{\sqrt{-1}(h\circ \psi-\chi\arg \pr{\psi})(\tilde{\eta}(t))/\chi},\ \ t\ge 0.
\end{equation*}
When we adopt a time change $\tilde{\eta}_{f}(t)=\tilde{\eta}(f(t))$ with $f(t)=\int_{0}^{t}|\pr{\psi}(\tilde{\eta}(s))|ds$,
we see that
\begin{equation*}
	\frac{d\tilde{\eta}_{f}(t)}{dt}=e^{\sqrt{-1}(h\circ\psi-\chi\arg\pr{\psi})(\tilde{\eta}_{f}(t))/\chi},\ \ t\ge 0.
\end{equation*}
Since a time reparametrization does not change the whole curve,
we can say that the domains $D$ with a smooth function $h$
and $\tilde{D}$ with $h\circ\psi-\chi \arg\pr{\psi}$ are
equivalent as long as flow lines of vector fields $e^{\sqrt{-1}h/\chi}$ and $e^{\sqrt{-1}(h\circ\psi-\chi\arg\pr{\psi})/\chi}$ are concerned.
Interestingly, this equivalence relation also makes sense even when we work with a $C^{\infty}_{0}(D)^{\prime}$-valued random field $H$
instead of a smooth function \cite{MillerSheffield2016a,MillerSheffield2016b,MillerSheffield2016c,MillerSheffield2017}.
\begin{defn}
A {\it $\chi$-imaginary surface} ($\chi$-IS) is an equivalence class of pairs $(D,H)$ of simply connected domains $D\subsetneq\mbb{C}$
and $C^{\infty}_{0}(D)^{\prime}$-valued random fields $H$ under the equivalence relation
\begin{equation*}
	(D,H)\sim_{\chi} (\tilde{D},\tilde{H}):=(\psi^{-1}(D),H\circ \psi -\chi\arg \pr{\psi}),
\end{equation*}
where $\tilde{D}\subsetneq\mbb{C}$ is another simply connected domain 
and $\psi:\tilde{D}\to D$ is a conformal equivalence.
\end{defn}

It has been shown in ~\cite{MillerSheffield2016a,Sheffield2016} that  for a $\chi$-IS
$(\mbb{H},H^{\mrm{Dir}}_{\mbb{H}}-\frac{2}{\sqrt{\kappa}} \arg(\cdot))$
with $\chi=\frac{2}{\sqrt{\kappa}}-\frac{\sqrt{\kappa}}{2}$,
the flow line starting at the origin can be identified with the curve for the SLE$_{\kappa}$.
Note that the $\arg$ function is a harmonic function on $\mbb{H}$ with the boundary value
changes at the origin and infinity by $\pi$.
In this sense, above imaginary surface is said to have {\it boundary condition changing points} (BCCPs) at the origin and infinity.

We settle the {\it flow line problem} for a $\chi$-IS with more BCCPs than two as follows:
Let $\bm{x}=(x_{1},\dots, x_{N})\in\mrm{Conf}^{<}_{N}(\mbb{R})$ and $\beta_{1},\dots,\beta_{N}$ be real numbers.
The $C^{\infty}_{0}(\mbb{H})^{\prime}$-valued random field
\begin{equation*}
	H_{\mbb{H}}^{\bm{x},\beta,\mfrak{I}}=H^{\mrm{Dir}}_{\mbb{H}}-\sum_{i=1}^{N}\beta_{i}\arg (\cdot-x_{i})
\end{equation*}
on $\mbb{H}$ defines a $\chi$-IS with BCCPs $(x_{1},\dots,x_{N},\infty)$,
whose boundary value has discontinuity at $x_{i}$ by $\pi\beta_{i}$, $i=1,\dots, N$
and at $\infty$ by $-\pi\sum_{i=1}^{N}\beta_{i}$.
Here $\mfrak{I}$ stands for {\it imaginary}.
\begin{prob}
\label{prob:4}
Determine the probability law for multiple flow lines of a $\chi$-IS with $N+1$ BCCPs 
($\chi$-IS-$(N+1)$-BCCPs) $H_{\mbb{H}}^{\bm{x},\beta,\mfrak{I}}$
starting at boundary points $\bm{x}\in\mrm{Conf}^{<}_{N}(\mbb{R})$.
\end{prob}

This problem is solved as follows:
\begin{thm}
\label{thm:answer_flowline}
Let $\kappa\in (0,4]$ and $N\in\mbb{Z}_{\ge 1}$.
The flow line problem for the $\chi$-IS-$(N+1)$-BCCPs $(\mbb{H},H^{\bm{x},\beta,\mfrak{I}}_{\mbb{H}})$ 
with $\chi=\frac{2}{\sqrt{\kappa}}-\frac{\sqrt{\kappa}}{2}$
is solved for any boundary points $\bm{x}=(x_{1},\dots, x_{N})\in \mrm{Conf}^{<}_{N}(\mbb{R})$
if $\beta=(\beta_{1},\dots,\beta_{N})$ is given by $\beta_{i}=\frac{2}{\sqrt{\kappa}}$, $i=1,\dots, N$.
The probability law of the flow lines is given by the multiple SLE$_{\kappa}$ driven by
the time change of the Dyson model $\set{\bm{X}^{\mrm{D}_{8/\kappa}}_{\kappa t}:t\ge 0}$.
\end{thm}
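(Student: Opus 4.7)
The plan is to adapt the Miller--Sheffield forward coupling between a GFF-based imaginary surface and an SLE$_\kappa$ flow line to the $N$-slit setting furnished by Theorem \ref{thm:multple_Loewner}. I would begin with the Loewner chain $\{g_t\}_{t\ge 0}$ driven by the time-changed Dyson process $\bm{X}_t=\bm{X}^{\mrm{D}_{8/\kappa}}_{\kappa t}$ started at $\bm{x}\in\mrm{Conf}^<_N(\mbb{R})$, which generates $N$ non-intersecting slits $\eta^{(i)}$ anchored at $x_i$ (the non-collision property being guaranteed by $8/\kappa\ge 2$, i.e.\ $\kappa\le 4$). On the slit domain $\mbb{H}^\eta_t$ I would introduce the harmonic function
\begin{equation*}
\mfrak{h}_t(z)=-\sum_{i=1}^N\beta_i\arg\bigl(g_t(z)-X^{(i)}_t\bigr)+\chi\arg g_t'(z),
\end{equation*}
whose initial value is precisely the harmonic part of $H^{\bm{x},\beta,\mfrak{I}}_{\mbb{H}}$. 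Interpreting $g_t^{-1}:\mbb{H}\to \mbb{H}^\eta_t$ as the conformal equivalence underlying the IS relation, $\mfrak{h}_t$ is the pullback of $-\sum_i\beta_i\arg(\cdot-X^{(i)}_t)$ from the uniformized half-plane. The candidate coupling states that for every $t\ge 0$,
\begin{equation*}
H^{\bm{x},\beta,\mfrak{I}}_{\mbb{H}}\overset{\mathrm{(law)}}{=}\widetilde H_t+\mfrak{h}_t,
\end{equation*}
where $\widetilde H_t$ is, conditionally on $\mcal{F}_t:=\sigma(\bm{X}_s:0\le s\le t)$, a zero-boundary Dirichlet GFF on $\mbb{H}^\eta_t$ independent of $\mcal{F}_t$.

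The verification would proceed by the standard coupling-martingale argument of Sheffield. For each compactly supported smooth test function $\rho$ on $\mbb{H}$, I would consider the generating functional $\mbb{E}\bigl[\exp\bigl(\sqrt{-1}(H^{\bm{x},\beta,\mfrak{I}}_{\mbb{H}},\rho)\bigr)\mid\mcal{F}_t\bigr]$; under the proposed decomposition it factorizes into $\exp\bigl(\sqrt{-1}(\mfrak{h}_t,\rho)\bigr)$ times a Gaussian involving the conformally covariant Dirichlet Green's function $G^{\mrm{Dir}}_{\mbb{H}^\eta_t}$. Applying It\^o's formula to both factors using the multiple Loewner equation \eqref{eq:multiple_Loewner} and the SDE~\eqref{eq:driving_process_Schramm} for $\bm{X}_t$, the $\kappa$ quadratic variation of the $\arg\bigl(g_t(z)-X^{(i)}_t\bigr)$ terms combines with the Loewner infinitesimal transport to cancel the Hadamard variation of $G^{\mrm{Dir}}_{\mbb{H}^\eta_t}$. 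The finite-variation part then collects the Loewner drifts $2/(g_t(z)-X^{(i)}_t)$, the yet-unknown drifts $F^{(i)}(\bm{X}_t)$ of the driving process, and the contribution of $\chi\arg g_t'$; these cancel identically precisely when $\chi=\tfrac{2}{\sqrt\kappa}-\tfrac{\sqrt\kappa}{2}$, $\beta_i=\tfrac{2}{\sqrt\kappa}$ and $F^{(i)}(\bm{x})=\sum_{j\ne i}\tfrac{4}{x_i-x_j}$, which is exactly the drift of the time-changed Dyson model with parameter $8/\kappa$. The resulting process is then a continuous local martingale and, after a standard uniform-integrability argument, the distributional coupling follows.

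Once the coupling is established at the Gaussian level, I would conclude by invoking the uniqueness theorem of Dub\'edat and Miller--Sheffield: under the imaginary-geometry parameters $\chi=\tfrac{2}{\sqrt\kappa}-\tfrac{\sqrt\kappa}{2}$ and $\beta_i=\tfrac{2}{\sqrt\kappa}$, the flow line of $H^{\bm{x},\beta,\mfrak{I}}_{\mbb{H}}$ started at each BCCP $x_i$ is a.s.\ determined by the field, so the curves $\eta^{(i)}$ produced by the Loewner chain must coincide with these flow lines, and hence the joint law of the flow lines is that of the multiple SLE$_\kappa$ driven by $\bm{X}^{\mrm{D}_{8/\kappa}}_{\kappa t}$. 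I expect the principal obstacle to lie in the It\^o bookkeeping in the middle step: with $N$ simultaneously evolving driving functions, the cross-terms arising from $\arg\bigl(g_t(z)-X^{(i)}_t\bigr)$ against $\arg\bigl(g_t(z)-X^{(j)}_t\bigr)$ for $i\ne j$, as well as the mixed martingale parts of $\mfrak{h}_t$ and of the Dirichlet Green's function expansion, must organize themselves into exactly the pairwise repulsion $4/(X^{(i)}_t-X^{(j)}_t)$ demanded by the Dyson model, with no leftover drifts nor non-vanishing covariations between distinct test-function evaluations of $\widetilde H_t$.
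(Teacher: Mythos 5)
Your proposal follows essentially the same route as the paper: define the interpolating harmonic field on the slit domain, show via It\^o's formula that it is a local martingale whose quadratic variation cancels the Hadamard variation of the Dirichlet Green's function precisely when $\beta_i=2/\sqrt{\kappa}$, $\chi=2/\sqrt{\kappa}-\sqrt{\kappa}/2$ and the drift is the Dyson repulsion $F^{(i)}(\bm{x})=\sum_{j\neq i}4/(x_i-x_j)$, then conclude stationarity in law by Gaussian independence and identify the slits as flow lines. One small slip: under the imaginary-surface equivalence $(D,H)\sim_\chi(\psi^{-1}(D),H\circ\psi-\chi\arg\psi')$ with $\psi=g_t:\mbb{H}^\eta_t\to\mbb{H}$, the correction term should be $-\chi\arg g_t'(z)$, not $+\chi\arg g_t'(z)$ (your final parameter identification is nonetheless correct, so this is evidently a typo); and note that the appeal to pathwise uniqueness of the flow lines in the multi-curve setting is at the same heuristic level as the paper, which explicitly defers a rigorous multi-curve generalization of the Dub\'edat/Miller--Sheffield uniqueness argument to future work.
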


This theorem will be proved in Sect.\ \ref{sect:coupling_forward}.
In the case when $\kappa=4$, a related problem was addressed in ~\cite{PeltolaWu2019},
where the connection probability of the multiple level lines for the GFF was investigated.

\section{Proof of Theorem \ref{thm:answer_welding}}
\label{sect:coupling_reverse}

\subsection{Reduction of Theorem \ref{thm:answer_welding}}
To solve Problems \ref{prob:1} to \ref{prob:3},  we define the {\it cutting operation} on 
$\widetilde{\mcal{S}_{\gamma,N+1}^{\mrm{Rot}}}(\mbb{H})$-valued random fields associated with a multiple SLE.
Let $(H,\bm{X})$ be an $\widetilde{\mcal{S}_{\gamma,N+1}^{\mrm{Rot}}}(\mbb{H})$-valued random field (see the right picture in Fig.~\ref{fig:solution_Loewner}).
Suppose that we have an $N$-tuple of interacting stochastic processes $\{\bm{X}_{t}=(X^{(1)}_{t},\dots, X^{(N)}_{t})\in\mbb{R}^{N}:t\ge 0\}$ 
with the initial conditions $X^{(i)}_{0}=X_{i}$, $i=1,\dots, N$, which is conditionally independent of $(H,\bm{X})$.
We assume that it determines random slits $\{\eta^{(i)}\}_{i=1}^{N}$ through the correspondence in Theorem \ref{thm:multple_Loewner}.
Then these slits are anchored at the initial marked boundary points $\bm{X}$, i.e., $\lim_{t\to 0}\eta^{(i)}(t)=X_{i}$, $i=1,\dots, N$, a.s.
The configuration space for $\{\bm{X}_{t}:t\ge 0\}$ is identified with $\mrm{Conf}^{<}_{N}(\mbb{R})\times [0,\infty)$.
We denote the probability law on the space $\mrm{Conf}^{<}_{N}(\mbb{R})\times [0,\infty)$
which governs $\{\bm{X}_{t}:t\ge 0\}$ by $\mbb{P}^{\mrm{SLE}}$, since it also governs the multiple SLE given in the form (\ref{eq:multiple_Loewner}).
Then at each time $T\in [0,\infty)$, $\mbb{P}^{\mrm{SLE}}$ induces a probability measure on $\mrm{Conf}^{<}_{N}(\mbb{R})$ by
\begin{equation*}
	\mbb{P}^{\mrm{SLE}}_{T}(dx_{1},\dots,dx_{N})=\mbb{P}^{\mrm{SLE}}(X^{(1)}_{T}\in dx_{1},\dots,X^{(N)}_{T}\in dx_{N}).
\end{equation*}

\begin{figure}
\centering
\includegraphics[width=\hsize]{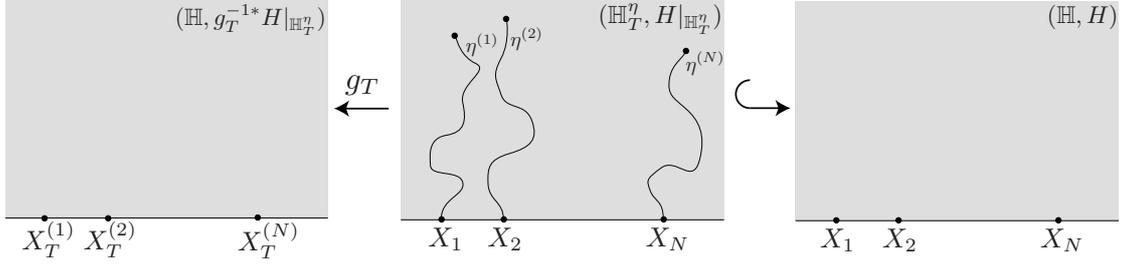}
\caption{Cutting operation by multiple SLE.}
\label{fig:solution_Loewner}
\end{figure}

We fix a time $T\in [0,\infty)$ and restrict the random distribution $H$ on the domain $\mbb{H}^{\eta}_{T}\subsetneq \mbb{H}$
to obtain a new $\gamma$-QS-$(N+1)$-MBPs (see the middle picture in Fig.~\ref{fig:solution_Loewner})
\begin{equation}
	\label{eq:quantum_surface_restricted}
	\left[\mbb{H}^{\eta}_{T},(\eta^{(1)}(T),\dots, \eta^{(N)}(T),\infty),H|_{\mbb{H}^{\eta}_{T}}\right]_{\gamma}.
\end{equation}
It is manifest from this construction that we have
\begin{equation*}
	\nu^{\gamma}_{H|_{\mbb{H}^{\eta}_{T}}}(\eta^{(i)}(0,t]_{\mrm{L}})=\nu^{\gamma}_{H|_{\mbb{H}^{\eta}_{T}}}(\eta^{(i)}(0,t]_{\mrm{R}}),\ \ t\in [0,T],\ \ i=1,\dots, N,\ \ \mrm{a.s.}
\end{equation*}

We define an $\widetilde{\mcal{S}_{\gamma,N+1}^{\mrm{Rot}}}(\mbb{H})$-valued random field (see the left picture in Fig.~\ref{fig:solution_Loewner})
\begin{equation}
	\label{eq:qsurf_obvious}
	\mfrak{A}^{(\bm{X}_{t}:0\le t\le T)}(H,\bm{X}):=\left(g_{T}^{-1 \ast}H|_{\mbb{H}^{\eta}_{T}}, (X^{(1)}_{T},\dots, X^{(N)}_{T})\right),
\end{equation}
where we wrote $g_{T}^{-1\ast}H|_{\mbb{H}^{\eta}_{T}}=H|_{\mbb{H}^{\eta}_{T}}\circ g_{T}^{-1}+Q\log |g_{T}^{-1\prime}|$.
Then, uniformizing the domain $\mbb{H}^{\eta}_{T}$ to $\mbb{H}$ by the conformal mapping $g_{T}$,
we can find that the $\gamma$-QS-$(N+1)$-MBPs (\ref{eq:quantum_surface_restricted}) coincides with
$\pi^{\infty}_{\gamma,N+1}(\mfrak{A}^{(\bm{X}_{t}:0\le t\le T)}(H,\bm{X}))$.
We call this assignment $\mfrak{A}^{(\bm{X}_{t}:0\le t\le T)}$ of the $\widetilde{\mcal{S}_{\gamma,N+1}^{\mrm{Rot}}}(\mbb{H})$-valued random field (\ref{eq:qsurf_obvious})
to a given $\widetilde{\mcal{S}_{\gamma,N+1}^{\mrm{Rot}}}(\mbb{H})$-valued random field $(H,\bm{X})$ the {\it cutting operation} associated with the multiple SLE driven by $\{\bm{X}_{t}:0\le t\le T\}$.

Notice that for a quantum surface $\pi^{\infty}_{\gamma,N+1}(\mfrak{A}^{(\bm{X}_{t}:0\le t\le T)}(H,\bm{X}))$ 
obtained by the cutting operation associated with the multiple SLE, the versions of Problems \ref{prob:1} to \ref{prob:3} are easily solved.
Indeed, the mapping $g_{T}$ gives the desired conformal equivalence to solve Problem \ref{prob:1}.
Problem \ref{prob:3} can also be answered:
Since the Loewner theory ensures that the slits $\{\eta^{(i)}\}_{i=1}^{N}$ are anchored at $\bm{X}$,
i.e., $\bm{\eta}_{0}:=(\eta^{(1)}_{0},\dots, \eta^{(N)}_{0})=\bm{X}$, a.s.,
the anchor points $\bm{\eta}_{0}$ are deterministic if and only if the initial configuration $\bm{X}$ is deterministic.
In this case, the probability law of the resulting slits $\{\eta^{(i)}\}_{i=1}^{N}$ is given by the law of the multiple SLE,
answering Problem \ref{prob:2}.

Therefore, if a $\gamma$-QS-$(N+1)$-MBPs obtained by the composition of the cutting operation and the surjection $\pi^{\infty}_{\gamma,N+1}$
is of the $(\widetilde{\bm{X}},\alpha)$-standard type for some $\mrm{Conf}^{<}_{N}(\mbb{R})$-valued random variable $\widetilde{\bm{X}}$,
the conformal welding problem for it is solved in the above arguments.
The following proposition gives such examples.
\begin{prop}
\label{prop:stationarity_welding}
Let $(H_{\mbb{H}}^{\bm{X},\alpha},\bm{X})$ be an $\widetilde{\mcal{S}_{\gamma,N+1}^{\mrm{Rot}}}(\mbb{H})$-valued random field
introduced in (\ref{eq:pre-standard-field}) with $(\alpha_{1},\dots, \alpha_{N})=(\frac{2}{\gamma},\dots, \frac{2}{\gamma})$
and assume that $\{\bm{X}_{t}=(X^{(1)}_{t},\dots, X^{(N)}_{t}):t\ge 0\}$ solves the set of SDEs (\ref{eq:driving_process_Schramm})
with $\kappa=\gamma^{2}$ and
\begin{equation}
\label{eq:function_canonical}
	F^{(i)}(\bm{x})=\sum_{\substack{j=1\\j\neq i}}^{N}\frac{4}{x_{i}-x_{j}},\ \ i=1,\dots, N,
\end{equation}
starting at $\bm{X}$.
Then, for an arbitrary $T\in [0,\infty)$, we have
\begin{equation}
\label{eq:qsurf_T_standard}
	\mfrak{A}^{(\bm{X}_{t}:0\le t\le T)}(H_{\mbb{H}}^{\bm{X},\alpha},\bm{X})\overset{\mrm{(law)}}{=}\left(H^{\bm{X}_{T},\alpha}_{\mbb{H}},\bm{X}_{T}\right)
\end{equation}
as $\widetilde{\mcal{S}_{\gamma,N+1}^{\mrm{Rot}}}(\mbb{H})$-valued random fields.
In particular, $\pi^{\infty}_{\gamma,N+1}(\mfrak{A}^{(\bm{X}_{t}:0\le t\le T)}(H,\bm{X}))$
is a $\gamma$-QS-$(N+1)$-MBPs of the $(\bm{X}_{T},\alpha)$-standard type.
\end{prop}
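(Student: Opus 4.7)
The plan is to reduce the equality in joint law of~(\ref{eq:qsurf_T_standard}) to a comparison of conditional Gaussian moments. Since the second component $\bm X_T$ is identical on both sides as a random variable, it suffices to prove that the first components agree in law jointly with $\bm X_T$. Conditioning on the full driving path $\{\bm X_t\}_{0\le t\le T}$ renders the Loewner chain $\{g_t\}_{t\le T}$ deterministic and leaves $H^{\mrm{Fr}}_{\mbb H}$ an independent free-boundary GFF, so that
$$g_T^{-1\ast}H^{\bm X,\alpha}_{\mbb H}|_{\mbb H^\eta_T} = H^{\mrm{Fr}}_{\mbb H}|_{\mbb H^\eta_T}\circ g_T^{-1} + u^{\bm X,\alpha}_{\mbb H}\circ g_T^{-1} + Q\log|g_T^{-1\prime}|$$
is a Gaussian field; on the right, $H^{\bm X_T,\alpha}_{\mbb H}$ conditional on $\bm X_T$ is also Gaussian, with mean $u^{\bm X_T,\alpha}_{\mbb H}$ and covariance $G^{\mrm{Fr}}_{\mbb H}$. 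Hence the proposition will follow from matching the conditional means and covariances, modulo the additive-constant ambiguity intrinsic to the free-boundary GFF.

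For the covariance, I would use a Markov-type decomposition of $H^{\mrm{Fr}}_{\mbb H}|_{\mbb H^\eta_T}$ as the independent sum of a GFF on $\mbb H^\eta_T$ with Dirichlet conditions along the slits and Neumann conditions on the $\mbb R$-part of the boundary, plus a random harmonic extension matching the trace of $H^{\mrm{Fr}}_{\mbb H}$ across the slits. Pulling back through $g_T^{-1}$ and adding the $Q\log|g_T^{-1\prime}|$ correction dictated by the $\gamma$-equivalence~(\ref{eq:quantum_equivalence}), the mixed-boundary GFF and the harmonic extension reassemble into a free-boundary GFF on $\mbb H$ plus a deterministic harmonic term, by the conformal invariance of free-boundary GFFs together with Schwarz reflection across the real intervals away from the slits. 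The harmonic correction is absorbed into the additive-constant ambiguity of the free-boundary GFF.

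For the mean, I would introduce, for each fixed $z\in\mbb H$, the real-valued process
$$M_t(z) := \sum_{i=1}^N\tfrac{2}{\gamma}\log|f_t(z)-X_0^{(i)}| + Q\log|f_t'(z)| - \sum_{i=1}^N\tfrac{2}{\gamma}\log|z-X_t^{(i)}|, \quad f_t := g_t^{-1},$$
and show that $M_t(z)$ is a continuous local martingale starting at $0$. The inverse Loewner equation $\partial_t f_t(z) = -f_t'(z)\sum_j 2/(z-X_t^{(j)})$, obtained by differentiating $g_t(f_t(z))=z$ in $t$ and invoking~(\ref{eq:multiple_Loewner}), combined with the SDE~(\ref{eq:driving_process_Schramm}) under the drift~(\ref{eq:function_canonical}), yields a direct It\^o computation producing three types of $dt$-contributions: (i) the Loewner pushforward of the logarithmic singularities through the chain rule, (ii) the It\^o correction $\tfrac{\kappa}{2}\alpha_i^2$ from the Brownian part of $\bm X_t$, and (iii) the Dyson pair interaction $4/(X_t^{(i)}-X_t^{(j)})$. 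These cancel identically precisely when $\alpha_i = 2/\gamma$, $Q = 2/\gamma + \gamma/2$, and $\kappa = \gamma^2$. The martingale part of $M_t(z)$ is harmonic in $z$ and absorbs into the additive-constant ambiguity of the free-boundary GFF, yielding the mean identity.

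The main obstacle will be the drift-cancellation step: it is a genuinely $N$-body calculation in which the cross terms $4/(X_t^{(i)}-X_t^{(j)})$ generated by the Loewner pushforward of log-singularities must match the Dyson interaction $F^{(i)}$ term-by-term. This matching is what forces the coupling $\beta = 8/\kappa$ between the Dyson parameter and the SLE parameter, an interaction absent from the $N=1$ setting of~\cite{Sheffield2016}. A secondary technical point is the treatment of the singularity at the $(N+1)$-st marked point $\infty$, which requires the hydrodynamic normalization $g_t(z)=z+2Nt/z+O(|z|^{-2})$ so that the identity persists in the local coordinate $w = 1/z$ at $\infty$.
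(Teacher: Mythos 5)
Your proposal has a fundamental structural error that invalidates both the mean and the covariance arguments, and the two halves of your argument are in fact mutually incompatible.

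The key object you introduce, $M_t(z)$ with $f_t := g_t^{-1}$, is \emph{not} a local martingale. Since $g_t$ solves an ODE in $t$ for each fixed point, the paths $t\mapsto f_t(z)$ and $t\mapsto f'_t(z)$ have bounded variation: the terms $\sum_i\frac{2}{\gamma}\log|f_t(z)-X_0^{(i)}|+Q\log|f'_t(z)|$ carry no Brownian increment and produce a drift of the form $\operatorname{Re}\sum_{i,j}\frac{-f'_t(z)}{(f_t(z)-X_0^{(i)})(z-X_t^{(j)})}$ plus the Schwarzian-type correction $-Q\operatorname{Re}\frac{f''_t(z)}{f'_t(z)}\sum_j\frac{2}{z-X_t^{(j)}}$, both of which depend on $f_t(z)$, $f'_t(z)$, $f''_t(z)$ and the initial points $X_0^{(i)}$. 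These cannot cancel against the $dt$-contributions from $-\frac{2}{\gamma}\sum_i\log|z-X_t^{(i)}|$, which involve only $z$ and $X_t^{(i)}$. Even the self-interaction terms fail to cancel: the net coefficient of $(z-X_t^{(j)})^{-2}$ in the drift is $\frac{4}{\gamma}+2\gamma\neq 0$. There is no parameter choice that makes $M_t(z)$ a local martingale. The paper avoids this by working with the \emph{reverse} flow $f^T_t$ of (\ref{eq:reverse_multiple_SLE}) driven by the time-reversed process $Y_{T;t}^{(i)}=X^{(i)}_{T-t}$, which \emph{is} a diffusion (by time reversibility of Brownian motion). Crucially, the reverse-flow ODE has $f^T_t(z)-Y_{T;t}^{(i)}$, not $z-X^{(j)}_t$, in its denominators, so the It\^o correction and the Dyson drift can annihilate the Loewner drift pointwise; see Lemma \ref{lem:key_martingale}. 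Your $M_T(z)$ does coincide with the paper's $\mathfrak h_T(z)-\mathfrak h_0(z)$ at the endpoint, but the two interpolating paths in $t$ are entirely different, and only the reverse-time path is a martingale.

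The covariance argument is also flawed, and in a way that exposes a contradiction with your mean argument. You condition on the full driving path $\{\bm X_t\}_{t\le T}$, which makes $g_T$ and hence $M_t(z)$ deterministic — so $M_t(z)$ cannot be a nontrivial martingale under the conditional law, contradicting your mean argument. More importantly, conditionally on the path, the covariance of $H^{\mrm{Fr}}_{\mbb H}|_{\mbb H^\eta_T}\circ g_T^{-1}$ is $G^{\mrm{Fr}}_{\mbb H}(g_T^{-1}(z),g_T^{-1}(w))$, which is \emph{strictly smaller} than $G^{\mrm{Fr}}_{\mbb H}(z,w)$ because $g_T^{-1}$ maps $\mbb H$ into the proper subdomain $\mbb H^\eta_T$. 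The free-boundary Green's function is conformally covariant, not invariant under such inclusions, so no amount of Schwarz reflection on the real-line segments will restore the full covariance. The deficit $G^{\mrm{Fr}}_{\mbb H}(z,w)-G^{\mrm{Fr}}_{\mbb H}(g_T^{-1}(z),g_T^{-1}(w))$ is precisely what the paper's martingale $\mathfrak h_t$ supplies through its quadratic variation $d\langle\mathfrak h(z),\mathfrak h(w)\rangle_t=-dG^{\mrm{Fr}}_{\mbb H^\eta_t}(z,w)$, and this is exactly the randomness your conditioning discards. The equality in law (\ref{eq:qsurf_T_standard}) holds only \emph{unconditionally}; conditioned on the path, the two sides have different covariances. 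You need the unified reverse-flow interpolation $\mathfrak p_t=\mathfrak h_t+H^{\mrm{Fr}}_{\mbb H}\circ f^T_t$, in which the Brownian fluctuation of the harmonic correction supplies the missing Gaussian mass.
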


Note that Theorem \ref{thm:answer_welding} is concluded from of Proposition \ref{prop:stationarity_welding} owing to the arguments followed by Proposition \ref{prop:stationarity_welding} above. Indeed the stochastic process $\set{\bm{X}_{t}:t\ge 0}$ solving (\ref{eq:driving_process_Schramm}) with (\ref{eq:function_canonical}) is just the desired time change of the Dyson model. To make the argument more precise, one only has to notice that a multiple SLE driven by such a Dyson model is absolutely continuous with respect to multiple of independent SLEs \cite{Graham2007}, and therefore, the argument in \cite{Sheffield2016} can be applied.

In the subsequent subsections, we prove Proposition \ref{prop:stationarity_welding}.
It suffices to prove that $H^{\bm{X}_{T},\alpha}_{\mbb{H}}$ 
and $g_{T}^{-1\ast}H^{\bm{X},\alpha}_{\mbb{H}}|_{\mbb{H}^{\eta}_{T}}$ obey the same probability law.
The central idea is to interpolate these two $C_{\nabla}^{\infty}(\mbb{H})^{\prime}$-valued random fields by a single stochastic process and show its stationarity.
The cutting operation $\mfrak{A}^{(\bm{X}_{t}:0\le t\le T)}$ indeed defines a candidate of such an interpolating stochastic process,
but it turns out that the reverse flow behaves much better.
Before proceeding, let us introduce the reverse flow of a multiple Loewner chain needed in our proof.

\subsection{Reverse flow of a multiple Loewner chain}
Let $\{\bm{X}_{t}=(X^{(1)}_{t},\dots, X^{(N)}_{t}):t\ge 0\}$ be a set of continuous functions of $t$ that drives the Loewner equation (\ref{eq:multiple_Loewner}).
We assume that given $\{\bm{X}_{t}:t\ge 0\}$, the Lowener equation (\ref{eq:multiple_Loewner}) has a unique solution
and determines an $N$-tuple of slits $\{\eta^{(i)}\}_{i=1}^{N}$.
We fix a time $T\in (0,\infty)$ and set $Y_{T;t}^{(i)}:=X^{(i)}_{T-t}$, $t\in [0,T]$, $i=1,\dots, N$.
The reverse flow of the Loewner chain is defined as the solution of
\begin{equation}
\label{eq:reverse_multiple_SLE}
	\frac{d}{dt}f^{T}_{t}(z)=-\sum_{i=1}^{N}\frac{2}{f^{T}_{t}(z)-Y_{T;t}^{(i)}},\ \ t\in [0,T],\ \ \ f^{T}_{0}(z)=z\in\mbb{H}.
\end{equation}

\begin{lem}
\label{lem:reverse_flow_is_inverse_map}
The identity $f^{T}_{T}=g_{T}^{-1}$ holds,
where $g_{T}^{-1}:\mbb{H}\to\mbb{H}^{\eta}_{T}$ is the inverse map of the uniformizing map $g_{T}:\mbb{H}^{\eta}_{T}\to\mbb{H}$
that solves the Loewner equation (\ref{eq:multiple_Loewner}).
\end{lem}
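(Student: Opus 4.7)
The plan is to exhibit an explicit time-reparametrized composition of the forward Loewner chain that (i) satisfies the reverse Loewner equation (\ref{eq:reverse_multiple_SLE}) and (ii) agrees at $t=T$ with $g_T^{-1}$, and then conclude by uniqueness of ODE solutions.

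Concretely, I would set
\begin{equation*}
  h_t(z) := g_{T-t}\bigl(g_T^{-1}(z)\bigr),\ \ t\in[0,T],\ \ z\in\mbb{H}.
\end{equation*}
The composition makes sense because $g_T^{-1}$ maps $\mbb{H}$ onto $\mbb{H}^{\eta}_T$, and since $\bigcup_{i=1}^N \eta^{(i)}(0,T-t]\subset\bigcup_{i=1}^N \eta^{(i)}(0,T]$ we have $\mbb{H}^{\eta}_T\subset\mbb{H}^{\eta}_{T-t}$, so $g_{T-t}$ is defined on the range of $g_T^{-1}$. Clearly $h_0(z)=g_T(g_T^{-1}(z))=z$, and $h_T(z)=g_0(g_T^{-1}(z))=g_T^{-1}(z)$, so the target identity amounts to $f^T_T=h_T$.

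Next, differentiating in $t$ and using the chain rule together with the forward Loewner equation (\ref{eq:multiple_Loewner}) evaluated at time $s=T-t$ and point $g_T^{-1}(z)\in\mbb{H}^{\eta}_{T-t}$, I get
\begin{equation*}
  \frac{d}{dt}h_t(z)
  = -\left.\frac{d}{ds}g_s\bigl(g_T^{-1}(z)\bigr)\right|_{s=T-t}
  = -\sum_{i=1}^{N}\frac{2}{g_{T-t}(g_T^{-1}(z))-X^{(i)}_{T-t}}
  = -\sum_{i=1}^{N}\frac{2}{h_t(z)-Y^{(i)}_{T;t}},
\end{equation*}
where the last equality uses the definition $Y^{(i)}_{T;t}=X^{(i)}_{T-t}$. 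Thus $\{h_t\}_{t\in[0,T]}$ satisfies exactly the reverse Loewner equation (\ref{eq:reverse_multiple_SLE}) with the initial condition $h_0=\mrm{id}_{\mbb{H}}$.

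Finally, for each fixed $z\in\mbb{H}$, equation (\ref{eq:reverse_multiple_SLE}) is a non-autonomous ODE in $t$ whose right-hand side is locally Lipschitz in the state variable away from the singular set $\{Y^{(i)}_{T;t}\}_{i=1}^N\subset\mbb{R}$. Since both $h_t(z)$ and $f^T_t(z)$ stay in $\mbb{H}$ (the trajectories do not hit $\mbb{R}$), standard uniqueness for ODEs forces $h_t(z)=f^T_t(z)$ for all $t\in[0,T]$. Setting $t=T$ yields $f^T_T(z)=h_T(z)=g_T^{-1}(z)$, as claimed. I do not foresee a serious obstacle: the only point requiring care is verifying that $g_T^{-1}(z)$ lies in the domain of $g_{T-t}$ for all $t\in[0,T]$, which is immediate from the monotonicity $\mbb{H}^{\eta}_T\subset\mbb{H}^{\eta}_{T-t}$.
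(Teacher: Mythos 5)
Your argument is correct, and it is the mirror image of the paper's. The paper sets $u^{T}_{t} := f^{T}_{T-t}$, shows that this satisfies the \emph{forward} Loewner equation (\ref{eq:multiple_Loewner}) with initial condition $u^{T}_{0} = f^{T}_{T}$, and invokes the (already assumed) uniqueness of the forward chain to identify $u^{T}_{t}(z) = g_{t}(f^{T}_{T}(z))$; evaluating at $t=T$ gives $z = g_{T}(f^{T}_{T}(z))$. You instead set $h_{t} := g_{T-t}\circ g_{T}^{-1}$, show it satisfies the \emph{reverse} equation (\ref{eq:reverse_multiple_SLE}) with $h_{0}=\mathrm{id}$, and appeal to uniqueness of the reverse ODE to identify $h_{t} = f^{T}_{t}$. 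The two routes are dual: the paper pulls the reverse flow back into the forward picture and only needs the uniqueness hypothesis that is already built into its setup, whereas your route pulls the forward flow into the reverse picture and so must supply well-posedness of the reverse ODE separately — which you do, correctly, by noting that $z\mapsto -\sum_{i}2/(z-Y^{(i)}_{T;t})$ is locally Lipschitz (indeed holomorphic) away from $\mathbb{R}$ and that both $h_{t}(z)$ and $f^{T}_{t}(z)$ remain in $\mathbb{H}$ (for $h_{t}$ this uses the monotonicity $\mathbb{H}^{\eta}_{T}\subset\mathbb{H}^{\eta}_{T-t}$ you point out; for $f^{T}_{t}$ it follows because the reverse flow increases imaginary parts). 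Both proofs are of essentially the same length; the paper's is marginally leaner because it leans on an already-stated assumption, while yours is a touch more self-contained in making the well-posedness of the reverse flow explicit.
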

\begin{proof}
Set $u^{T}_{t}(z):=f^{T}_{T-t}(z)$, $z\in\mbb{H}$, $t\in [0,T]$.
Then it satisfies
\begin{equation*}
	\frac{d}{dt}u^{T}_{t}(z)=\sum_{i=1}^{N}\frac{2}{u^{T}_{t}(z)-X^{(i)}_{t}},\ \ t\in [0,T],\ \ \ u^{T}_{0}(z)=f^{T}_{T}(z),\ \ z\in \mbb{H}.
\end{equation*}
Since we have assumed that the multiple Loewner equation (\ref{eq:multiple_Loewner}) has a unique solution,
this implies that $u^{T}_{t}(z)=g_{t}(f^{T}_{T}(z))$.
Indeed, both sides satisfy the same differential equation with the same initial condition.
In particular, at time $t=T$, $u^{T}_{T}(z)=f^{T}_{0}(z)=z=g_{T}(f^{T}_{T}(z))$, $z\in\mbb{H}$,
implying that $f^{T}_{T}=g_{T}^{-1}$.
\end{proof}

The reverse flow of the multiple SLE can also be formulated in connection to CFT as described in Appendix \ref{app:driving_processes}.

\subsection{Interpolation of random fields}
We assume that the set of driving processes $\{\bm{X}_{t}=(X^{(1)}_{t},\dots, X^{(N)}_{t}):t\ge 0\}$ is given by the system of SDEs (\ref{eq:driving_process_Schramm}).
For a fixed $T\in (0,\infty)$, we set the time reversed process $Y^{(i)}_{T;t}=X^{(i)}_{T-t}$, $t\in [0,T]$, $i=1,\dots, N$,
and let $\{f^{T}_{t}(\cdot):t\in [0,T]\}$ be the reverse flow defined in Eq.\ (\ref{eq:reverse_multiple_SLE}) driven by $\{\bm{Y}_{T;t}=(Y^{(1)}_{T:t},\dots, Y^{(N)}_{T;t}): t\in [0,T]\}$.
From Lemma \ref{lem:reverse_flow_is_inverse_map}, we can conclude that
$g_{T}^{-1\ast}H|_{\mbb{H}^{\eta}_{T}}=f_{T}^{T\ast}H|_{\mbb{H}^{\eta}_{T}}$ a.s.

Let us define a stochastic process
\begin{equation*}
	\mfrak{n}_{t}(z)=u^{\bm{Y}_{T;t},(2/\gamma,\dots, 2/\gamma)}_{\mbb{H}}(z)=\frac{2}{\gamma}\sum_{i=1}^{N}\log|z-Y_{T;t}^{(i)}|,\ \ t\in [0,T],\ \ z\in \mbb{H}.
\end{equation*}
We also consider $H^{\mrm{Fr}}_{\mbb{H}}$ which is independent of $\{B_{t}^{(i)}:t\ge 0\}_{i=1}^{N}$.
Then we see that, at each time $t\in [0,T]$, $[\mbb{H},H^{\mrm{Fr}}_{\mbb{H}}+\mfrak{n}_{t},(\bm{Y}_{T;t},\infty)]_{\gamma}$ is a $\gamma$-QS-$(N+1)$-MBPs
of the $(\bm{Y}_{T;t},\alpha)$-standard type with $(\alpha_{1},\dots, \alpha_{N})=(\frac{2}{\gamma},\dots, \frac{2}{\gamma})$.
We set
\begin{equation*}
	\mfrak{h}_{t}(z):=\mfrak{n}_{t}(f^{T}_{t}(z))+Q\log |f^{T\prime}_{t}(z)|,\ \ z\in \mbb{H},\ \ t\in [0,T]
\end{equation*}
with $Q=\frac{2}{\gamma}+\frac{\gamma}{2}$
and set
\begin{equation*}
	\mfrak{p}_{t}=\mfrak{h}_{t}+H_{\mbb{H}}^{\mrm{Fr}}\circ f^{T}_{t},\ \  t\in [0,T].
\end{equation*}
Then we can see that the stochastic process $\mfrak{p}_{t}$, $t\in [0,T]$ interpolates two $C_{\nabla}^{\infty}(H)^{\prime}$-valued random fields so that
 $\mfrak{p}_{0}=H^{\bm{X}_{T},\alpha}_{\mbb{H}}$ and $\mfrak{p}_{T}=f_{T}^{T\ast}H^{\bm{X},\alpha}_{\mbb{H}}|_{\mbb{H}^{\eta}_{T}}$.

\subsection{Stationarity of the stochastic process}
We claim that $\mfrak{p}_{0}$ and $\mfrak{p}_{T}$ obey the same probability law.
The proof relies on the following key lemmas:
\begin{lem}
\label{lem:key_martingale}
The stochastic process $\mfrak{h}_{t}(z)$, $z\in\mbb{H}$, $t\in [0,T]$ is a local martingale with increment
\begin{equation}
	\label{eq:interpolating_increment_welding}
	d\mfrak{h}_{t}(z)=\sum_{i=1}^{N}\mrm{Re}\frac{-2}{f^{T}_{t}(z)-Y_{T;t}^{(i)}}dB_{t}^{(i)},\ \ z\in\mbb{H},\ \ t\in [0,T],
\end{equation}
 if $\kappa=\gamma^{2}$ and the functions $\{F^{(i)}(\bm{x})\}_{i=1}^{N}$ are chosen as (\ref{eq:function_canonical}).
\end{lem}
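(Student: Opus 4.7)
The plan is to apply Itô's formula to each piece of $\mfrak{h}_t(z) = \mfrak{n}_t(f_t^T(z)) + Q\log|f_t^{T\prime}(z)|$ and verify that, under the hypotheses $\kappa = \gamma^2$, $Q = 2/\gamma + \gamma/2$ and the Dyson choice $F^{(i)}(\bm{x}) = \sum_{j\neq i} 4/(x_i - x_j)$, every finite-variation term cancels, leaving only the martingale increment in \eqref{eq:interpolating_increment_welding}.

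Writing $\log|w| = \mathrm{Re}\log w$ so that complex Itô calculus applies, I would first compute $d\log(f_t^T(z) - Y_{T;t}^{(i)})$. The reverse Loewner ODE contributes a pure drift $-\sum_j 2/(f_t^T(z) - Y_{T;t}^{(j)})\,dt$ via $df_t^T(z)$; the SDE for $Y_{T;t}^{(i)}$, driven by $\sqrt{\kappa}\,dB_t^{(i)}$ together with the reverse-time Dyson drift, contributes both its own drift term and an Itô correction $-(\kappa/2)/(f_t^T(z) - Y_{T;t}^{(i)})^2\,dt$, plus the martingale piece $-\sqrt{\kappa}\,dB_t^{(i)}/(f_t^T(z) - Y_{T;t}^{(i)})$. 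Separately, differentiating the reverse Loewner equation in $z$ yields the purely deterministic identity $d\log|f_t^{T\prime}(z)| = \mathrm{Re}\sum_i 2/(f_t^T(z) - Y_{T;t}^{(i)})^2\,dt$.

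Assembling $d\mfrak{h}_t(z)$ with the prefactors $2/\gamma$ and $Q$ and grouping the drift by pole structure in $f_t^T(z) - Y_{T;t}^{(i)}$, two types of terms appear. The double-pole ($j = i$) terms combine into $[(2/\gamma)(-2 - \kappa/2) + 2Q]\cdot\mathrm{Re}\sum_i (f_t^T(z) - Y_{T;t}^{(i)})^{-2}$, whose coefficient vanishes identically when $\kappa = \gamma^2$ and $Q = 2/\gamma + \gamma/2$. The off-diagonal simple-pole terms $-(4/\gamma)\sum_{i\neq j}\mathrm{Re}\bigl[(f_t^T(z) - Y_{T;t}^{(i)})(f_t^T(z) - Y_{T;t}^{(j)})\bigr]^{-1}$, after applying the partial-fraction identity $[(a-b)(a-c)]^{-1} = (b-c)^{-1}[(a-b)^{-1} - (a-c)^{-1}]$ and symmetrising the sum in $(i,j)$, collapse to $-(2/\gamma)\sum_i F^{(i)}(\bm{Y}_{T;t})/(f_t^T(z) - Y_{T;t}^{(i)})$; this is exactly cancelled by the drift-of-$Y$ contribution, thanks to the Dyson choice of $F^{(i)}$ together with the sign flip induced by the time-reversal. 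What survives is then $-(2\sqrt{\kappa}/\gamma)\sum_i \mathrm{Re}[1/(f_t^T(z) - Y_{T;t}^{(i)})]\,dB_t^{(i)}$, which reduces to \eqref{eq:interpolating_increment_welding} using $\sqrt{\kappa} = \gamma$.

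The main obstacle is the off-diagonal bookkeeping: one has to reorganise the double sum via partial fractions and confirm that the resulting coefficient of $1/(f_t^T(z) - Y_{T;t}^{(i)})$ is exactly the Dyson drift $\sum_{j\neq i} 4/(Y_{T;t}^{(i)} - Y_{T;t}^{(j)}) = F^{(i)}(\bm{Y}_{T;t})$, with a sign that aligns with the drift of the time-reversed driving process $Y_{T;t}^{(i)}$. A subsidiary technical point is specifying unambiguously the SDE satisfied by $Y_{T;t}^{(i)}$ in the reverse-time filtration so that the Itô calculus is well-defined; once this is fixed, the remainder of the proof is a mechanical combination of the Itô pieces above.
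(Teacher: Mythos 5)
Your proposal is correct and takes essentially the same route as the paper: pass to the complex logarithm $\mfrak{h}^{\ast}_t(z)=\frac{2}{\gamma}\sum_i\log(f^T_t(z)-Y^{(i)}_{T;t})+Q\log f^{T\prime}_t(z)$, apply It\^o's formula together with the reverse Loewner ODE and the time-reversed SDE $dY^{(i)}_{T;t}=\sqrt{\kappa}\,dB^{(i)}_t-F^{(i)}(\bm{Y}_{T;t})\,dt$, observe that the double-pole drift coefficient $\frac{2}{\gamma}(-2-\kappa/2)+2Q$ vanishes under $\kappa=\gamma^2$, $Q=\frac{2}{\gamma}+\frac{\gamma}{2}$, and collapse the off-diagonal double sum via the symmetrisation identity $\sum_{i\neq j}\frac{1}{(f-Y^{(i)})(f-Y^{(j)})}=2\sum_{i\neq j}\frac{1}{(f-Y^{(i)})(Y^{(i)}-Y^{(j)})}$ so that the Dyson drift cancels it. The only point of caution, which you rightly flag, is that one must pin down the SDE satisfied by the time-reversed driving process; the paper invokes time reversibility of Brownian motions to assert (\ref{eq:driving_process_reverse}) without further elaboration, exactly as you do.
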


\begin{proof}
Note that $\mfrak{h}_{t}(z)$ is the real part of
\begin{equation*}
	\mfrak{h}^{\ast}_{t}(z)=\frac{2}{\gamma}\sum_{i=1}^{N}\log (f^{T}_{t}(z)-Y_{T;t}^{(i)})+Q \log f^{T\prime}_{t}(z),\ \ z\in\mbb{H},\ \ t\in [0,T].
\end{equation*}
We will show that $\mfrak{h}^{\ast}_{t}(z)$, $z\in\mbb{H}$, $t\in [0,T]$ is a local martingale if $\kappa=\gamma^{2}$ and 
the functions $\{F^{(i)}(\bm{x})\}_{i=1}^{N}$ are chosen as (\ref{eq:function_canonical}).
Owing to the time reversibility of the Brownian motions, the set of time reversed driving processes $\{\bm{Y}_{T;t}:0\le t\le T\}$
solves the following system of SDEs:
\begin{equation}
	\label{eq:driving_process_reverse}
	dY^{(i)}_{T;t}=\sqrt{\kappa}dB^{(i)}_{t}-F^{(i)}(\bm{Y}_{T;t})dt,\ \ t\in [0,T],\ \ i=1,\dots, N.
\end{equation}
By Eqs.\ (\ref{eq:reverse_multiple_SLE}) and (\ref{eq:driving_process_reverse}), It{\^o}'s formula gives
\begin{align*}
	d\log (f^{T}_{t}(z)-Y_{T;t}^{(i)})
	&=\frac{1}{f^{T}_{t}(z)-Y_{T;t}^{(i)}}\left(\sum_{j=1}^{N}\frac{-2dt}{f^{T}_{t}(z)-Y_{T;t}^{(j)}}-\sqrt{\kappa}dB_{t}^{(i)}+F^{(i)}(\bm{Y}_{T;t})dt\right)\\
	&\hspace{15pt}-\frac{1}{2}\frac{\kappa dt}{(f^{T}_{t}(z)-Y_{T;t}^{(i)})^{2}},\\
	d\log f^{T\prime}_{t}(z)
	&=\sum_{i=1}^{N}\frac{2dt}{(f^{T}_{t}(z)-Y_{T;t}^{(i)})^{2}}.
\end{align*}
They are assembled to give the increment of $\mfrak{h}^{\ast}_{t}(z)$, $z\in\mbb{H}$,
\begin{align*}
	d\mfrak{h}^{\ast}_{t}(z)
	&=\frac{-4}{\sqrt{\kappa}}\left(\sum_{i=1}^{N}\frac{1}{f^{T}_{t}(z)-Y_{T;t}^{(i)}}\right)^{2}dt
		-\sum_{i=1}^{N}\frac{2dB_{t}^{(i)}}{f^{T}_{t}(z)-Y_{T;t}^{(i)}}
	+\frac{2}{\sqrt{\kappa}}\sum_{i=1}^{N}\frac{F^{(i)}(\bm{Y}_{T;t})dt}{f^{T}_{t}(z)-Y_{T;t}^{(i)}}\\
		&\hspace{15pt}-\sum_{i=1}^{N}\frac{\sqrt{\kappa}dt}{(f^{T}_{t}(z)-Y_{T;t}^{(i)})^{2}}
	+2Q\sum_{i=1}^{N}\frac{dt}{(f^{T}_{t}(z)-Y_{T;t}^{(i)})^{2}},\ \ t\in [0,T],
\end{align*}
where we have used the relation $\kappa=\gamma^{2}$.
It can be verified that
\begin{equation*}
		\sum_{\substack{i,j=1\\i\neq j}}^{N}\frac{1}{f^{T}_{t}(z)-Y_{T;t}^{(i)}}\frac{1}{Y_{T;t}^{(i)}-Y_{T;t}^{(j)}}
		=\frac{1}{2}\sum_{\substack{i,j=1\\i\neq j}}^{N}\frac{1}{(f^{T}_{t}(z)-Y_{T;t}^{(i)})(f^{T}_{t}(z)-Y_{T;t}^{(j)})}.
\end{equation*}
Using this, we see that the increment of $\mfrak{h}^{\ast}_{t}(z)$, $z\in\mbb{H}$ becomes
\begin{align*}
	d\mfrak{h}^{\ast}_{t}(z)=&\frac{2}{\sqrt{\kappa}}\sum_{i=1}^{N}\frac{1}{f^{T}_{t}(z)-Y^{(i)}_{T;t}}\left(F^{(i)}(\bm{Y}_{T;t})-\sum_{\substack{j=1\\ j\neq i}}^{N}\frac{4}{Y^{(i)}_{T;t}-Y^{(j)}_{T;t}}\right)dt \\
	&-\sum_{i=1}^{N}\frac{2dB_{t}^{(i)}}{f^{T}_{t}(z)-Y_{T;t}^{(i)}},\ \ t\in [0,T]
\end{align*}
and conclude that the stochastic process $\mfrak{h}^{\ast}_{t}(z)$ is a local martingale
if the functions $\{F^{(i)}(\bm{x})\}_{i=1}^{N}$ are chosen as (\ref{eq:function_canonical}).
Moreover, under such a choice of the functions $\{F^{(i)}(\bm{x})\}_{i=1}^{N}$, Eq.\ (\ref{eq:interpolating_increment_welding}) is obtained.
\end{proof}

Thus, at each $z\in\mbb{H}$, the stochastic process $\{\mfrak{h}_{t}(z):t\in [0,T]\}$ can be regarded as a Brownian motion after an appropriate time change.
In the following, we assume that the functions $\{F^{(i)}(\bm{x})\}_{i=1}^{N}$ are as (\ref{eq:function_canonical}).
By Eq.\ (\ref{eq:interpolating_increment_welding}), the cross variation between $\mfrak{h}_{t}(z)$ and $\mfrak{h}_{t}(w)$, $z,w\in\mbb{H}$ is given by
\begin{equation*}
	d\braket{\mfrak{h}(z),\mfrak{h}(w)}_{t}=\sum_{i=1}^{N}\left(\mrm{Re}\frac{2}{f^{T}_{t}(z)-Y_{T;t}^{(i)}}\right)\left( \mrm{Re}\frac{2}{f^{T}_{t}(w)-Y_{T;t}^{(i)}}\right)dt,\ \ z, w\in\mbb{H}.
\end{equation*}

\begin{lem}
Define $G^{\mrm{Fr}}_{\mbb{H}^{\eta}_{t}}(z,w):=G^{\mrm{Fr}}_{\mbb{H}}(f^{T}_{t}(z),f^{T}_{t}(w))$, $t\in [0,T]$, $z,w\in\mbb{H}$.
Then
\begin{equation*}
	d\braket{\mfrak{h}(z),\mfrak{h}(w)}_{t}=-dG^{\mrm{Fr}}_{\mbb{H}^{\eta}_{t}}(z,w),\ \ t\in [0,T],\ \ z, w\in \mbb{H}.
\end{equation*}
\end{lem}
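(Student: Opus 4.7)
The plan is to prove the identity by direct computation, using the key observation that the reverse flow $f^{T}_{t}(z)$ satisfies the ODE (\ref{eq:reverse_multiple_SLE}) in $t$ which contains no Brownian increment. Hence $t\mapsto f^{T}_{t}(z)$ is a finite-variation process, as is $G^{\mrm{Fr}}_{\mbb{H}^{\eta}_{t}}(z,w)$, and the ordinary chain rule (rather than the full It{\^o} formula) suffices to compute its differential. In particular, no quadratic-variation correction appears on the right-hand side, which is what will make the matching between the two sides exact.

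First, I would compute $dG^{\mrm{Fr}}_{\mbb{H}^{\eta}_{t}}(z,w)$ by writing $G^{\mrm{Fr}}_{\mbb{H}^{\eta}_{t}}(z,w)=-\mrm{Re}\log(f^{T}_{t}(z)-f^{T}_{t}(w))-\mrm{Re}\log(f^{T}_{t}(z)-\overline{f^{T}_{t}(w)})$, then differentiating each summand using (\ref{eq:reverse_multiple_SLE}) and its complex conjugate (valid because $Y^{(i)}_{T;t}\in\mbb{R}$). The partial-fraction identity $\frac{1}{A-B}\bigl(\frac{-2}{A-Y}+\frac{2}{B-Y}\bigr)=\frac{2}{(A-Y)(B-Y)}$ produces $d\log(f^{T}_{t}(z)-f^{T}_{t}(w))=\sum_{i=1}^{N}\frac{2\,dt}{(f^{T}_{t}(z)-Y^{(i)}_{T;t})(f^{T}_{t}(w)-Y^{(i)}_{T;t})}$, and the analogous expression holds with $f^{T}_{t}(w)$ replaced by $\overline{f^{T}_{t}(w)}$. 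Thus $-dG^{\mrm{Fr}}_{\mbb{H}^{\eta}_{t}}(z,w)$ is a sum over $i$ of $\mrm{Re}$ applied to these two rational functions, times $dt$.

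Second, I would expand $d\braket{\mfrak{h}(z),\mfrak{h}(w)}_{t}$ as given right after Lemma \ref{lem:key_martingale} via the polarization identity $\mrm{Re}(\alpha)\mrm{Re}(\beta)=\tfrac{1}{2}\mrm{Re}(\alpha\beta)+\tfrac{1}{2}\mrm{Re}(\alpha\overline{\beta})$. Because the $Y^{(i)}_{T;t}$ are real, $\overline{2/(f^{T}_{t}(w)-Y^{(i)}_{T;t})}=2/(\overline{f^{T}_{t}(w)}-Y^{(i)}_{T;t})$, so applying the polarization identity with $\alpha=2/(f^{T}_{t}(z)-Y^{(i)}_{T;t})$ and $\beta=2/(f^{T}_{t}(w)-Y^{(i)}_{T;t})$ yields precisely the same two rational functions as in the first step, with the same coefficients. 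Matching term-by-term concludes the proof.

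The main obstacle is purely bookkeeping: keeping the complex conjugations aligned between the holomorphic and antiholomorphic pieces of the Green function and confirming that the reality of each $Y^{(i)}_{T;t}$ is used correctly when moving conjugation across $2/(f^{T}_{t}(w)-Y^{(i)}_{T;t})$. No analytic subtlety arises, precisely because the reverse flow has no martingale component, so $dG^{\mrm{Fr}}_{\mbb{H}^{\eta}_{t}}(z,w)$ contains no It{\^o} correction and the two sides match identically.
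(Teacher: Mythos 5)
Your proposal is correct and follows essentially the same route as the paper: differentiate the two logarithmic pieces of $G^{\mrm{Fr}}_{\mbb{H}^{\eta}_{t}}(z,w)$ along the reverse-flow ODE, obtain the two families of rational terms, and identify them with the cross-variation via the polarization identity $\mrm{Re}(\alpha)\mrm{Re}(\beta)=\tfrac{1}{2}\mrm{Re}(\alpha\beta)+\tfrac{1}{2}\mrm{Re}(\alpha\overline{\beta})$ together with the reality of $Y^{(i)}_{T;t}$. Your explicit observation that $f^{T}_{t}(z)$ has finite variation (so no It\^{o} correction enters the chain rule for $G^{\mrm{Fr}}_{\mbb{H}^{\eta}_{t}}$) is implicit in the paper's direct computation but is a welcome clarification.
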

\begin{proof}
This can be verified by direct computation.
By definition, we have
\begin{equation*}
	G_{\mbb{H}^{\eta}_{t}}^{\mrm{Fr}}(z,w)=-\log |f^{T}_{t}(z)-f^{T}_{t}(w)|-\log |f^{T}_{t}(z)-\overline{f^{T}_{t}(w)}|.
\end{equation*}
Thus its increment is computed as
\begin{align*}
	dG^{\mrm{Fr}}_{\mbb{H}^{\eta}_{t}}(z,w)
	&=-\mrm{Re}\frac{df^{T}_{t}(z)-df^{T}_{t}(w)}{f^{T}_{t}(z)-f^{T}_{t}(w)}
		-\mrm{Re}\frac{df^{T}_{t}(z)-d\overline{f^{T}_{t}(w)}}{f^{T}_{t}(z)-\overline{f^{T}_{t}(w)}}\\
	&=\sum_{i=1}^{N}\mrm{Re}\frac{-2dt}{(f^{T}_{t}(z)-Y_{T;t}^{(i)})(f^{T}_{t}(w)-Y_{T;t}^{(i)})}\\
	&\hspace{15pt}+\sum_{i=1}^{N}\mrm{Re}\frac{-2dt}{(f^{T}_{t}(z)-Y_{T;t}^{(i)})(\overline{f^{T}_{t}(w)}-Y_{T;t}^{(i)})}\\
	&=-\sum_{i=1}^{N}\left(\mrm{Re}\frac{2}{f^{T}_{t}(z)-Y_{T;t}^{(i)}}\right)\left(\mrm{Re}\frac{2}{f^{T}_{t}(w)-Y_{T;t}^{(i)}}\right)dt
\end{align*}
which is the same as $-d\braket{\mfrak{h}(z),\mfrak{h}(w)}_{t}$, $t\in [0,T]$, $z,w\in\mbb{H}$.
\end{proof}

For a test function $\rho\in C_{\nabla}^{\infty}(\mbb{H})$, we have
\begin{equation*}
	d\braket{(\mfrak{h},\rho),(\mfrak{h},\rho)}_{t}=-dE^{\mrm{Fr}}_{t}(\rho),	
\end{equation*}
where
\begin{equation*}
	E^{\mrm{Fr}}_{t}(\rho)=\int_{\mbb{H}\times\mbb{H}}\rho(z)G^{\mrm{Fr}}_{\mbb{H}^{\eta}_{t}}(z,w)\rho(w)d(\mu\otimes\mu)(z,w)	
\end{equation*}
is non-increasing in the time variable $t\in [0,T]$.
This implies that $(\mfrak{h}_{t},\rho)$, $t\in [0,T]$, is a Brownian motion such that we can regard $-E^{\mrm{Fr}}_{t}(\rho)$ as a time variable.
Thus $(\mfrak{h}_{T},\rho)$ is normally distributed with mean $(\mfrak{h}_{0},\rho)$ and variance $-E^{\mrm{Fr}}_{T}(\rho)-(-E^{\mrm{Fr}}_{0}(\rho))$.
The random variable $(H^{\mrm{Fr}}_{\mbb{H}}\circ f^{T}_{T},\rho)$ is also normally distributed with mean zero and variance $E^{\mrm{Fr}}_{T}(\rho)$
by the conformal invariance of the GFF.
Since the random variable $(H^{\mrm{Fr}}_{\mbb{H}}\circ f^{T}_{T},\rho)$ is conditionally independent of $(\mfrak{h}_{T},\rho)$,
their sum $(\mfrak{p}_{T},\rho)$ is a normal random variable with mean $(\mfrak{h}_{0},\rho)$ and variance $E^{\mrm{Fr}}_{0}(\rho)$
coinciding with $(\mfrak{h}_{0}+H^{\mrm{Fr}}_{\mbb{H}},\rho)=(\mfrak{p}_{0},\rho)$ in probability law.
This implies $\mfrak{p}_{T}\overset{\mrm{(law)}}{=}\mfrak{p}_{0}$ as $C_{\nabla}^{\infty}(\mbb{H})^{\prime}$-valued random fields.
The proof of Proposition \ref{prop:stationarity_welding} is complete.

\section{Proof of Theorem \ref{thm:answer_flowline}}
\label{sect:coupling_forward}
This section is devoted to a proof of another main result Theorem \ref{thm:answer_flowline} in the present paper.
Again, we let $\{\bm{X}_{t}=(X_{t}^{(1)},\dots, X_{t}^{(N)}):t\ge 0\}$ be a set of driving processes satisfying (\ref{eq:driving_process_Schramm})
associated with parameter $\kappa>0$ and functions $\{F^{(i)}(\bm{x})\}_{i=1}^{N}$
with initial conditions $(X^{(1)}_{0},\dots, X^{(N)}_{0})=(x_{1},\dots, x_{N})\in\mrm{Conf}^{<}_{N}(\mbb{R})$.
We also assume that the Loewner equation (\ref{eq:multiple_Loewner}) driven by $\{\bm{X}_{t};t\ge 0\}$ has the unique solution $g_{t}$, $t\ge 0$
and determines $N$ non-intersecting slits $\{\eta^{(i)}\}_{i=1}^{N}$ in $\mbb{H}$ anchored on $\mbb{R}$.

\subsection{Key statement}
At each $z\in\mbb{H}$, consider a stochastic process
\begin{equation*}
	\mfrak{n}^{\mfrak{I}}_{t}(z)=\frac{-2}{\sqrt{\kappa}}\sum_{i=1}^{N}\arg (z-X_{t}^{(i)}),\ \ \ t\ge 0.
\end{equation*}
At each time $t\in [0,\infty)$, the random field $\mfrak{n}^{\mfrak{I}}_{t}$ is harmonic on $\mbb{H}$ and its boundary value changes at $N$ points $X^{(i)}_{t}\in \mbb{R}$, $i=1,\dots, N$.
Let $H^{\mrm{Dir}}_{\mbb{H}}$ be independent of $\{B^{(i)}_{t}:t\ge 0\}_{i=1}^{N}$.
Put $H^{\bm{X}_{t},\beta,\mfrak{I}}_{\mbb{H}}=\mfrak{n}^{\mfrak{I}}_{t}+H^{\mrm{Dir}}_{\mbb{H}}$, 
where $(\beta_{1},\dots, \beta_{N})=(\frac{2}{\sqrt{\kappa}},\dots,\frac{2}{\sqrt{\kappa}})$ are fixed here and in the sequel.
We also define
\begin{equation*}
	\mfrak{h}^{\mfrak{I}}_{t}(z):=\mfrak{n}^{\mfrak{I}}_{t}(g_{t}(z))-\chi \arg\pr{g}_{t}(z)
\end{equation*}
with $\chi\in\mbb{R}$ and
\begin{equation*}
	\mfrak{p}^{\mfrak{I}}_{t}:=H^{\bm{X}_{t},\beta,\mfrak{I}}_{\mbb{H}}\circ g_{t}-\chi \arg\pr{g}_{t}=\mfrak{h}^{\mfrak{I}}_{t}+H_{\mbb{H}}^{\mrm{Dir}}\circ g_{t}.
\end{equation*}
Note that $(\mbb{H}^{\eta}_{t},H^{\bm{X}_{t},\beta,\mfrak{I}}_{\mbb{H}}\circ g_{t}-\chi \arg\pr{g}_{t})\sim_{\chi}(\mbb{H},H^{\bm{X}_{t},\beta,\mfrak{I}}_{\mbb{H}})$.
Due to the initial condition $g_{0}(z)=z\in\mbb{H}$, we can see that $\mfrak{p}^{\mfrak{I}}_{0}=H^{\bm{x},\beta,\mfrak{I}}_{\mbb{H}}$,
where $\bm{x}=(x_{1},\dots, x_{N})\in\mrm{Conf}^{<}_{N}(\mbb{R})$ are the initial values of the driving processes.

\begin{prop}
\label{prop:stationarity_flowline}
Let $\kappa\in (0,4]$, $N\in\mbb{Z}_{\ge 1}$ and $\chi\in\mbb{R}$.
Suppose that $\{\bm{X}_{t}=(X^{(1)}_{t},\dots, X^{(N)}_{t}):t \ge 0\}$ is the solution of the system of SDEs (\ref{eq:driving_process_Schramm})
starting at $\bm{x}=(x_{1},\dots, x_{N})\in\mrm{Conf}^{<}_{N}(\mbb{R})$.
If $\chi=\frac{2}{\sqrt{\kappa}}-\frac{\sqrt{\kappa}}{2}$ and the functions $\{F^{(i)}(\bm{x})\}_{i=1}^{N}$ are given by (\ref{eq:function_canonical}),
then at each time $T\in [0,\infty)$, two $C^{\infty}_{0}(\mbb{H})^{\prime}$-valued random fields 
$\mfrak{p}^{\mfrak{I}}_{0}$ and $\mfrak{p}^{\mfrak{I}}_{T}$ obey the same probability law.
\end{prop}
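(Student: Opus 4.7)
The plan is to closely parallel the proof of Proposition \ref{prop:stationarity_welding}: exhibit $\mfrak{h}^{\mfrak{I}}_t$ as the imaginary part of a complex local martingale whose quadratic variation matches the increment of the pulled-back Dirichlet Green function, and then combine it with an independent Dirichlet GFF term via conformal invariance. The only real differences from the welding proof are that we work with the forward flow $g_t$ rather than the reverse flow $f^T_t$ and that we take imaginary (rather than real) parts; this is exactly what turns $Q$ into $\chi$ and $G^{\mrm{Fr}}$ into $G^{\mrm{Dir}}$. Concretely, introduce the holomorphic lift
\begin{equation*}
\mfrak{h}^{\mfrak{I}\ast}_t(z):=-\frac{2}{\sqrt{\kappa}}\sum_{i=1}^{N}\log\bigl(g_t(z)-X^{(i)}_t\bigr)-\chi\log \pr{g}_t(z),
\end{equation*}
whose imaginary part is $\mfrak{h}^{\mfrak{I}}_t(z)$. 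Applying It\^o's formula to $\mfrak{h}^{\mfrak{I}\ast}_t(z)$ via (\ref{eq:multiple_Loewner}) and (\ref{eq:driving_process_Schramm}), the Brownian contribution assembles into $\sum_i 2(g_t(z)-X^{(i)}_t)^{-1}dB^{(i)}_t$, while the finite-variation part is a linear combination of $\bigl(\sum_i(g_t(z)-X^{(i)}_t)^{-1}\bigr)^2$, $\sum_i F^{(i)}(\bm{X}_t)(g_t(z)-X^{(i)}_t)^{-1}$ and $\sum_i(g_t(z)-X^{(i)}_t)^{-2}$. Using the partial-fraction identity from the proof of Lemma \ref{lem:key_martingale}, the Dyson drift (\ref{eq:function_canonical}) kills the off-diagonal and $F^{(i)}$ contributions, after which the coefficient of $\sum_i(g_t(z)-X^{(i)}_t)^{-2}$ reduces to $-\tfrac{4}{\sqrt{\kappa}}+\sqrt{\kappa}+2\chi$, which vanishes precisely when $\chi=\tfrac{2}{\sqrt{\kappa}}-\tfrac{\sqrt{\kappa}}{2}$. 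Taking imaginary parts then gives
\begin{equation*}
d\mfrak{h}^{\mfrak{I}}_t(z)=\sum_{i=1}^{N}\mrm{Im}\,\frac{2}{g_t(z)-X^{(i)}_t}\,dB^{(i)}_t.
\end{equation*}

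Next, using the identity $\mrm{Im}(a)\mrm{Im}(b)=\tfrac{1}{2}\bigl(\mrm{Re}(a\bar b)-\mrm{Re}(ab)\bigr)$ together with the Loewner expressions for $d\bigl(g_t(z)-g_t(w)\bigr)$ and $d\bigl(g_t(z)-\overline{g_t(w)}\bigr)$, a direct computation parallel to the welding case gives
\begin{equation*}
d\braket{\mfrak{h}^{\mfrak{I}}(z),\mfrak{h}^{\mfrak{I}}(w)}_t=-dG^{\mrm{Dir}}_{\mbb{H}^{\eta}_t}(z,w),\quad G^{\mrm{Dir}}_{\mbb{H}^{\eta}_t}(z,w):=G^{\mrm{Dir}}_{\mbb{H}}\bigl(g_t(z),g_t(w)\bigr),
\end{equation*}
which is the exact Dirichlet analogue of the cross-variation formula used in Sect.\ \ref{sect:coupling_reverse}. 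Pairing with a test function $\rho\in C^{\infty}_{0}(\mbb{H})$ (localizing, if necessary, at the first time the slits touch $\mrm{supp}(\rho)$), one concludes that $(\mfrak{h}^{\mfrak{I}}_T,\rho)$ is Gaussian with mean $(\mfrak{h}^{\mfrak{I}}_0,\rho)$ and variance $E^{\mrm{Dir}}_0(\rho)-E^{\mrm{Dir}}_T(\rho)$, where $E^{\mrm{Dir}}_t(\rho)$ is defined analogously to $E^{\mrm{Fr}}_t(\rho)$ with $G^{\mrm{Dir}}$ in place of $G^{\mrm{Fr}}$. By conformal invariance of the Dirichlet GFF, $(H^{\mrm{Dir}}_{\mbb{H}}\circ g_T,\rho)$ is an independent centered Gaussian of variance $E^{\mrm{Dir}}_T(\rho)$, so $(\mfrak{p}^{\mfrak{I}}_T,\rho)$ is Gaussian with mean $(\mfrak{h}^{\mfrak{I}}_0,\rho)$ and variance $E^{\mrm{Dir}}_0(\rho)$, which matches the law of $(\mfrak{p}^{\mfrak{I}}_0,\rho)=(\mfrak{h}^{\mfrak{I}}_0+H^{\mrm{Dir}}_{\mbb{H}},\rho)$; hence $\mfrak{p}^{\mfrak{I}}_0\overset{\mrm{(law)}}{=}\mfrak{p}^{\mfrak{I}}_T$.

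The main obstacle is the sign bookkeeping in the It\^o calculation of the first paragraph: compared with the welding proof, the logarithm enters with the opposite sign and the Loewner flow runs forward rather than in reverse, so the coefficients of the three types of drift terms are rearranged, and arranging for the diagonal coefficient $-\tfrac{4}{\sqrt{\kappa}}+\sqrt{\kappa}+2\chi$ to vanish simultaneously with the cancellation of the $F^{(i)}$ terms is what forces the specific value $\chi=\tfrac{2}{\sqrt{\kappa}}-\tfrac{\sqrt{\kappa}}{2}$. A secondary technicality is that $\mfrak{p}^{\mfrak{I}}_t$ is a priori defined only on the random domain $\mbb{H}^{\eta}_t$, so a stopping-time localization is needed to make sense of $(\mfrak{h}^{\mfrak{I}}_t,\rho)$ for $\rho\in C^{\infty}_{0}(\mbb{H})$; the hypothesis $\kappa\in(0,4]$ ensures that the multiple SLE$_{\kappa}$ produces simple, non-boundary-hitting curves, for which this localization is standard.
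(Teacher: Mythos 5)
Your proof is correct and follows the same route as the paper: exhibit $\mfrak{h}^{\mfrak{I}}_t$ as the imaginary part of a holomorphic local martingale, identify its cross-variation with $-dG^{\mrm{Dir}}_{\mbb{H}^{\eta}_t}$, and combine with the independent $H^{\mrm{Dir}}_{\mbb{H}}\circ g_T$ via conformal invariance. The paper itself omits the It\^o computation, deferring to the welding case; your explicit derivation of the diagonal coefficient $-\tfrac{4}{\sqrt{\kappa}}+\sqrt{\kappa}+2\chi$ correctly pins down the constraint $\chi=\tfrac{2}{\sqrt{\kappa}}-\tfrac{\sqrt{\kappa}}{2}$, which the paper's intermediate lemma leaves implicit, and your cross-variation identity $\mrm{Im}(a)\mrm{Im}(b)=\tfrac12(\mrm{Re}(a\bar b)-\mrm{Re}(ab))$ is exactly what yields the Dirichlet (rather than free-boundary) Green function.
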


\subsection{Proof of Proposition \ref{prop:stationarity_flowline}}
The proof is very similar to that of Proposition \ref{prop:stationarity_welding}.
Thus we omit the computational details.
The following lemmas play key roles:

\begin{lem}
At each $z\in \mbb{H}$,
the stochastic process $\mfrak{h}^{\mfrak{I}}_{t}(z)$, $t\ge 0$ is a local martingale with increment
\begin{equation*}
	d\mfrak{h}^{\mfrak{I}}_{t}(z)=\sum_{i=1}^{N}\mrm{Im}\frac{2}{g_{t}(z)-X_{t}^{(i)}}dB_{t}^{(i)},\ \ t\ge 0,
\end{equation*}
if the functions $\{F^{(i)}(\bm{x})\}_{i=1}^{N}$ are chosen as (\ref{eq:function_canonical}).
\end{lem}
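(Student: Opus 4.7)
\medskip

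The plan is to mimic the proof of Lemma~\ref{lem:key_martingale} in Section~\ref{sect:coupling_reverse}, with the forward flow $g_{t}$ in place of the reverse flow $f^{T}_{t}$, and with the imaginary part (i.e.\ $\arg$) of a holomorphic lift in place of its real part. First I would introduce the complex-valued process
\begin{equation*}
	\mfrak{h}^{\mfrak{I},\ast}_{t}(z)=-\frac{2}{\sqrt{\kappa}}\sum_{i=1}^{N}\log (g_{t}(z)-X^{(i)}_{t})-\chi\log \pr{g}_{t}(z),\ \ z\in\mbb{H},\ \ t\ge 0,
\end{equation*}
where we fix continuous branches along the flow (possible since $g_{t}(z)\in\mbb{H}$, $X^{(i)}_{t}\in\mbb{R}$ and $\pr{g}_{t}(z)\neq 0$). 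Then $\mfrak{h}^{\mfrak{I}}_{t}(z)=\mrm{Im}\,\mfrak{h}^{\mfrak{I},\ast}_{t}(z)$, so it suffices to show that $\mfrak{h}^{\mfrak{I},\ast}_{t}(z)$ is a complex local martingale under the stated choices of $\chi$ and $\{F^{(i)}\}_{i=1}^{N}$.

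Next I would apply It{\^o}'s formula, using the multiple Loewner equation (\ref{eq:multiple_Loewner}) and its $z$-derivative
\begin{equation*}
	\frac{d}{dt}\log \pr{g}_{t}(z)=-\sum_{j=1}^{N}\frac{2}{(g_{t}(z)-X_{t}^{(j)})^{2}},
\end{equation*}
together with the SDE (\ref{eq:driving_process_Schramm}) for $X^{(i)}_{t}$ (only $X^{(i)}_{t}$ carries Brownian noise, so the quadratic-variation contribution to $d\log(g_{t}(z)-X^{(i)}_{t})$ is $-\tfrac{\kappa}{2}(g_{t}(z)-X^{(i)}_{t})^{-2}dt$). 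Collecting terms, the drift of $d\mfrak{h}^{\mfrak{I},\ast}_{t}(z)$ splits into a sum of ``single-pole'' contributions proportional to $(g_{t}(z)-X^{(i)}_{t})^{-1}$ and ``double-pole'' contributions proportional to $(g_{t}(z)-X^{(i)}_{t})^{-2}$.

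For the single-pole terms, exactly as in the proof of Lemma~\ref{lem:key_martingale}, I would invoke the partial-fraction identity
\begin{equation*}
	\sum_{\substack{i,j=1\\ i\neq j}}^{N}\frac{1}{(u-X^{(i)}_{t})(u-X^{(j)}_{t})}
	=2\sum_{i=1}^{N}\frac{1}{u-X^{(i)}_{t}}\sum_{\substack{j=1\\ j\neq i}}^{N}\frac{1}{X^{(i)}_{t}-X^{(j)}_{t}}
\end{equation*}
(with $u=g_{t}(z)$), which ensures that the drift coefficient of $(g_{t}(z)-X^{(i)}_{t})^{-1}$ becomes $\tfrac{2}{\sqrt{\kappa}}(F^{(i)}(\bm{X}_{t})-\sum_{j\neq i}\tfrac{4}{X^{(i)}_{t}-X^{(j)}_{t}})$ and therefore vanishes for the choice (\ref{eq:function_canonical}) of $F^{(i)}$. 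For the double-pole terms, the remaining coefficient equals $-\tfrac{4}{\sqrt{\kappa}}+\sqrt{\kappa}+2\chi$ times $\sum_{i}(g_{t}(z)-X^{(i)}_{t})^{-2}$, which vanishes precisely when $\chi=\frac{2}{\sqrt{\kappa}}-\frac{\sqrt{\kappa}}{2}$. This kills the entire drift of $\mfrak{h}^{\mfrak{I},\ast}_{t}(z)$, and the surviving martingale part, coming solely from $-\tfrac{2}{\sqrt{\kappa}}\sum_{i}d\log(g_{t}(z)-X^{(i)}_{t})$, reads $\sum_{i=1}^{N}\tfrac{2\,dB^{(i)}_{t}}{g_{t}(z)-X^{(i)}_{t}}$; taking imaginary parts yields the claimed formula.

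The main obstacle is purely bookkeeping: organizing the multiple contributions arising from the It{\^o} expansion so that the partial-fraction identity can be applied cleanly. Conceptually nothing new beyond Lemma~\ref{lem:key_martingale} is needed; one only has to observe that the opposite sign in the forward Loewner equation (\ref{eq:multiple_Loewner}) compared to the reverse flow (\ref{eq:reverse_multiple_SLE}), combined with the replacement of $Q\log|\pr{f}^{T}_{t}|$ by $-\chi\arg \pr{g}_{t}$, conspires to yield the same algebraic cancellation under the condition $\chi=\frac{2}{\sqrt{\kappa}}-\frac{\sqrt{\kappa}}{2}$.
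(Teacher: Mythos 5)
Your proof is correct and mirrors precisely the argument the paper gives for Lemma~\ref{lem:key_martingale} (which the paper itself says it is omitting "because the proof is very similar"): you pass to the holomorphic lift $\mfrak{h}^{\mfrak{I},\ast}_{t}(z)$, apply It\^o's formula with the forward Loewner flow and the SDE for $X^{(i)}_{t}$, use the same partial-fraction identity to reorganize the cross terms, and observe that the single-pole coefficients cancel iff $F^{(i)}$ is as in (\ref{eq:function_canonical}) while the double-pole coefficient $-\tfrac{4}{\sqrt{\kappa}}+\sqrt{\kappa}+2\chi$ vanishes iff $\chi=\tfrac{2}{\sqrt{\kappa}}-\tfrac{\sqrt{\kappa}}{2}$. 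The bookkeeping is right, the surviving martingale part $2\sum_i (g_t(z)-X^{(i)}_t)^{-1}dB^{(i)}_t$ has the correct sign, and taking $\mrm{Im}$ recovers the claimed increment.
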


Thus, at each $t\in [0,\infty)$, the cross variation between $\mfrak{h}^{\mfrak{I}}_{t}(z)$ and $\mfrak{h}^{\mfrak{I}}_{t}(w)$ is given by
\begin{equation*}
	d\braket{\mfrak{h}^{\mfrak{I}}(z),\mfrak{h}^{\mfrak{I}}(w)}_{t}=\sum_{i=1}^{N}\left(\mrm{Im}\frac{2}{g_{t}(z)-X^{(i)}_{t}}\right)\left(\mrm{Im}\frac{2}{g_{t}(w)-X^{(i)}_{t}}\right)dt,\ \ z,w\in\mbb{H},
\end{equation*}
which turns out to be expressed using the Green function.

\begin{lem}
Let $G^{\mrm{Dir}}_{\mbb{H}^{\eta}_{t}}(z,w):=G^{\mrm{Dir}}_{\mbb{H}}(g_{t}(z),g_{t}(w))$, $t\ge 0$, $z,w\in\mbb{H}^{\eta}_{t}$.
Then we have
\begin{equation*}
	d\braket{\mfrak{h}^{\mfrak{I}}(z),\mfrak{h}^{\mfrak{I}}(w)}_{t}=-dG^{\mrm{Dir}}_{\mbb{H}^{\eta}_{t}}(z,w),\ \ t\ge 0,\ \ z,w\in\mbb{H}^{\eta}_{t}.
\end{equation*}
\end{lem}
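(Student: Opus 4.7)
The proof is parallel to the corresponding lemma in Section~\ref{sect:coupling_reverse} (the one identifying $d\langle \mfrak{h}(z),\mfrak{h}(w)\rangle_{t}$ with $-dG^{\mrm{Fr}}_{\mbb{H}^{\eta}_{t}}(z,w)$), but with the Dirichlet Green function replacing the free-boundary one and the forward flow $g_{t}$ replacing the reverse flow $f_{t}^{T}$. My plan is a direct computation: start from the explicit formula $G^{\mrm{Dir}}_{\mbb{H}}(z,w)=-\log|z-w|+\log|z-\overline{w}|$, so that
\begin{equation*}
G^{\mrm{Dir}}_{\mbb{H}^{\eta}_{t}}(z,w)=-\log|g_{t}(z)-g_{t}(w)|+\log|g_{t}(z)-\overline{g_{t}(w)}|,
\end{equation*}
and differentiate in $t$ using the Loewner equation~(\ref{eq:multiple_Loewner}). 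Since the $X^{(i)}_{t}$ are real, $\overline{g_{t}(w)}$ satisfies the complex-conjugate Loewner equation, so one gets
\begin{equation*}
\frac{d(g_{t}(z)-g_{t}(w))}{g_{t}(z)-g_{t}(w)}=-\sum_{i=1}^{N}\frac{2\,dt}{(g_{t}(z)-X^{(i)}_{t})(g_{t}(w)-X^{(i)}_{t})},
\end{equation*}
and the analogous identity with $g_{t}(w)$ replaced by $\overline{g_{t}(w)}$ in the second factor.

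Taking real parts then gives
\begin{equation*}
dG^{\mrm{Dir}}_{\mbb{H}^{\eta}_{t}}(z,w)=\sum_{i=1}^{N}\mrm{Re}\!\left[\frac{2}{(g_{t}(z)-X^{(i)}_{t})(g_{t}(w)-X^{(i)}_{t})}-\frac{2}{(g_{t}(z)-X^{(i)}_{t})(\overline{g_{t}(w)}-X^{(i)}_{t})}\right]\!dt.
\end{equation*}
Introducing the shorthand $a_{i}=\tfrac{2}{g_{t}(z)-X^{(i)}_{t}}$ and $b_{i}=\tfrac{2}{g_{t}(w)-X^{(i)}_{t}}$ (so that $\overline{b}_{i}=\tfrac{2}{\overline{g_{t}(w)}-X^{(i)}_{t}}$ since $X^{(i)}_{t}\in\mbb{R}$), this becomes $\tfrac{1}{2}\sum_{i}\bigl[\mrm{Re}(a_{i}b_{i})-\mrm{Re}(a_{i}\overline{b}_{i})\bigr]dt$.

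The final step is the elementary identity $\mrm{Re}(ab)-\mrm{Re}(a\overline{b})=-2\,\mrm{Im}(a)\mrm{Im}(b)$, which, applied termwise, turns the right-hand side into $-\sum_{i}\mrm{Im}(a_{i})\mrm{Im}(b_{i})\,dt$. Comparing with the expression for $d\langle \mfrak{h}^{\mfrak{I}}(z),\mfrak{h}^{\mfrak{I}}(w)\rangle_{t}$ furnished by the preceding lemma (obtained from the martingale part $d\mfrak{h}^{\mfrak{I}}_{t}(z)=\sum_{i}\mrm{Im}\tfrac{2}{g_{t}(z)-X^{(i)}_{t}}\,dB^{(i)}_{t}$ and independence of the $B^{(i)}$), we conclude the claimed relation $d\langle \mfrak{h}^{\mfrak{I}}(z),\mfrak{h}^{\mfrak{I}}(w)\rangle_{t}=-dG^{\mrm{Dir}}_{\mbb{H}^{\eta}_{t}}(z,w)$. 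There is no real obstacle in this argument; the only thing to watch is the sign flip relative to the welding proof, which traces back to the $+\log|z-\overline{w}|$ in the Dirichlet Green function (versus $-\log|z-\overline{w}|$ in the free-boundary one) and correspondingly to the appearance of $\mrm{Im}$ rather than $\mrm{Re}$ of the Loewner velocity in the martingale $\mfrak{h}^{\mfrak{I}}_{t}$.
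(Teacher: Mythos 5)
Your proof is correct and is precisely the direct computation the paper has in mind: the paper omits the details, pointing only to the parallel free-boundary lemma in Sect.\ \ref{sect:coupling_reverse}, and your argument reproduces that computation with the forward Loewner flow in place of the reverse one, the Dirichlet Green function in place of the free-boundary one, and the resulting sign/$\mrm{Im}$-vs-$\mrm{Re}$ changes correctly tracked via the identity $\mrm{Re}(ab)-\mrm{Re}(a\bar b)=-2\,\mrm{Im}(a)\mrm{Im}(b)$.
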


For $\rho\in C^{\infty}_{0}(\mbb{H})$, the quadratic variation of $(\mfrak{h}^{\mfrak{I}}_{t},\rho)$ becomes
\begin{equation*}
	d\braket{(\mfrak{h}^{\mfrak{I}},\rho), (\mfrak{h}^{\mfrak{I}},\rho)}=-dE^{\mrm{Dir}}_{t}(\rho),	
\end{equation*}
where
\begin{equation*}
	E^{\mrm{Dir}}_{t}(\rho)=\int_{\mbb{H}^{\eta}_{t}\times\mbb{H}^{\eta}_{t}}\rho(z)G_{\mbb{H}^{\eta}_{t}}(z,w)\rho(w)d(\mu\otimes\mu)(z,w)	
\end{equation*}
is the Dirichlet energy of $\rho$ in the domain $\mbb{H}^{\eta}_{t}$, $t\ge 0$.
Here $\rho$ is supposed to be a function on $\mbb{H}^{\eta}_{t}$ by restriction.
Since the process $\mbb{H}^{\eta}_{t}$, $t\ge 0$ is non-increasing, regarding $-E^{\mrm{Dir}}_{t}(\rho)$ as a new time variable,
the stochastic process $(\mfrak{h}^{\mfrak{I}}_{t},\rho)$ is a Brownian motion.
This implies that at any fixed time $T\in [0,\infty)$,
the random variable $(\mfrak{h}^{\mfrak{I}}_{T},\rho)$ is normal with mean $(\mfrak{h}^{\mfrak{I}}_{0},\rho)$ and variance $-E^{\mrm{Dir}}_{T}(\rho)-(-E^{\mrm{Dir}}_{0}(\rho))$.
The random variable $(H^{\mrm{Dir}}_{\mbb{H}}\circ g_{T},\rho)$ is also a mean-zero normal variable with variance $E^{\mrm{Dir}}_{T}(\rho)$.
Since $(\mfrak{h}^{\mfrak{I}}_{T},\rho)$ and $(H^{\mrm{Dir}}_{\mbb{H}}\circ g_{T},\rho)$ are conditionally independent,
their sum $(\mfrak{p}^{\mfrak{I}}_{T},\rho)$ is normally distributed with mean $(\mfrak{h}^{\mfrak{I}}_{0},\rho)$ and variance $E^{\mrm{Dir}}_{0}(\rho)$,
thus it coincides with $(\mfrak{p}^{\mfrak{I}}_{0},\rho)$ in probability law.

\subsection{Arguments}
\label{subsect:arguments_flow_lines}
\begin{figure}[h]
\centering
\includegraphics[width=\hsize]{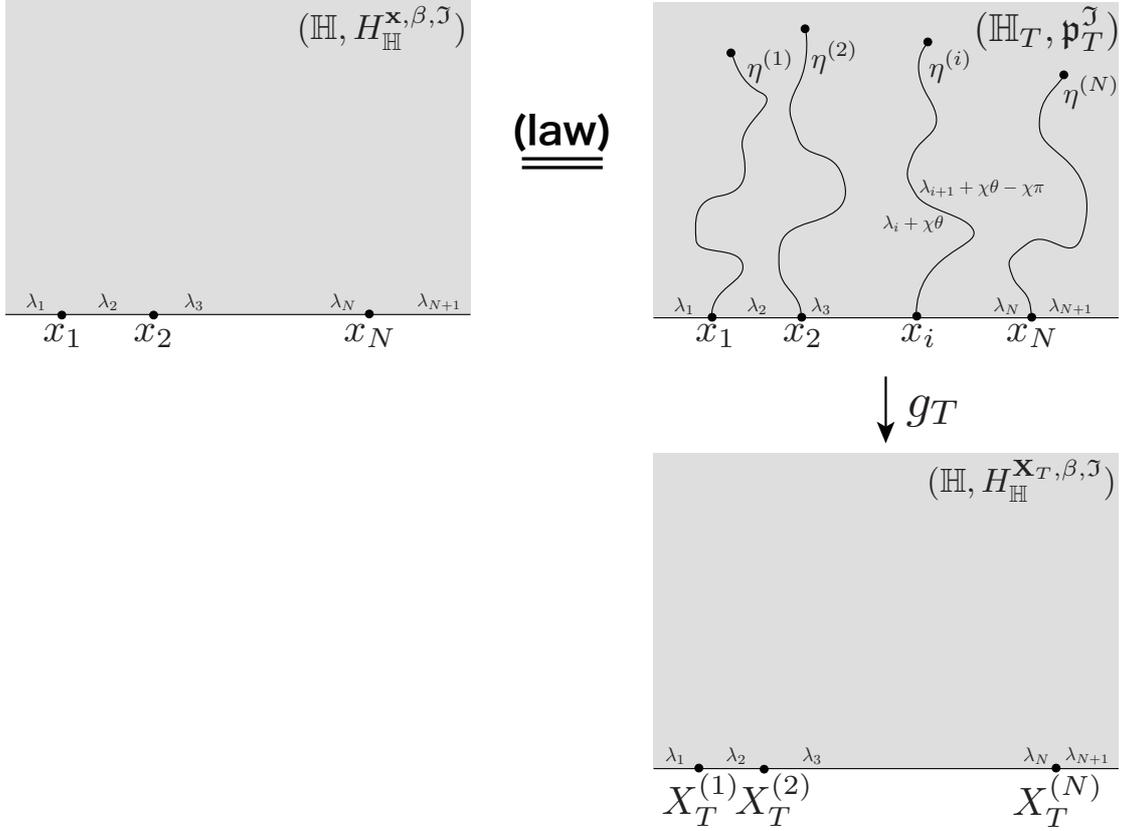}
\caption{Geometric interpretation of Proposition \ref{prop:stationarity_flowline}: The boundary values on the real axis are defined by $\lambda_{i}=-2\pi(N-i-1)/\sqrt{\kappa}$, $i=1,\dots, N+1$.}
\label{fig:coupling_forward}
\end{figure}
We here present a geometric interpretation of Proposition \ref{prop:stationarity_flowline} and see that it indeed proves Theorem \ref{thm:answer_flowline}.
That is, Proposition \ref{prop:stationarity_flowline} provides two distinct samplings of $C^{\infty}_{0}(\mbb{H})^{\prime}$-valued random fields
which obey the same probability law.
The first one is to directly sample the random field $\mfrak{p}^{\mfrak{I}}_{0}=H^{\bm{x},\beta,\mfrak{I}}_{\mbb{H}}$ (see the upper-left picture in Fig.~\ref{fig:coupling_forward}),
and the other one is first to sample multiple SLE paths $\{\eta^{(i)}\}_{i=1}^{N}$
up to time $T\in (0,\infty)$, sample the random field $\mfrak{p}^{\mfrak{I}}_{T}=\mfrak{h}^{\mfrak{I}}_{T}+H^{\mrm{Dir}}_{\mbb{H}}\circ g_{T}$
on the domain $\mbb{H}^{\eta}_{T}$, and then extend it to $\mbb{H}$ (see the upper-right picture of Fig.~\ref{fig:coupling_forward}).
Coincidence in probability law between these two samplings roughly means that there is a one-to-one correspondence
among instances of two samplings with the same weights.
In particular, with each instance $h$ of $\mfrak{p}^{\mfrak{I}}_{0}$, one can associate an $N$-tuple of multiple SLE paths $\{\eta^{(i)}\}_{i=1}^{N}$.
We describe this correspondence more concretely in the sequel.

Notice that $\mfrak{n}^{\mfrak{I}}_{t}$, $t\ge 0$, is the unique harmonic function with boundary conditions
\begin{equation*}
	\mfrak{n}^{\mfrak{I}}_{t}(x)=-\frac{2\pi}{\sqrt{\kappa}}(N-i),\ \ \mbox{if}\ x\in (X_{t}^{(i)},X_{t}^{(i+1)}),\ \ i=0,1,\dots, N.
\end{equation*}
Here we follow the convention that $X_{t}^{(0)}=-\infty$ and $X_{t}^{(N+1)}=+\infty$.
In particular, it has discontinuity at $X_{t}^{(i)}$ by $2\pi/\sqrt{\kappa}$ along $\mbb{R}$, $i=1,\dots, N$.
Note that $H^{\bm{X}_{t},\beta,\mfrak{I}}_{\mbb{H}}$, $t\ge 0$ is regarded as the GFF with the same boundary condition as $\mfrak{n}^{\mfrak{I}}_{t}$, $t\ge 0$.

\begin{figure}[h]
\centering
\includegraphics[width=.5\hsize]{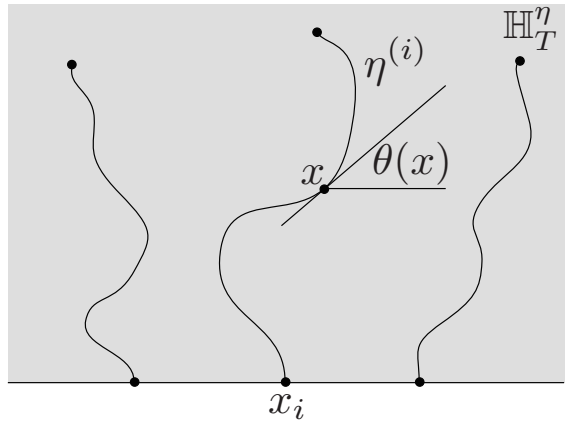}
\caption{The angle $\theta(x)$}
\label{fig:angle}
\end{figure}

Let us investigate the behavior of $\mfrak{p}^{\mfrak{I}}_{T}$, $T\in [0,\infty)$ near the boundary of $\mbb{H}^{\eta}_{T}$.
Near a point $x\in (x_{i},x_{i+1})$ on the real axis, we have $\lim_{z\to x+0\sqrt{-1}}\arg \pr{g}_{T}(z)= 0$.
Thus $\mfrak{p}^{\mfrak{I}}_{T}$, $T\in [0,\infty)$ has the same boundary value as $\mfrak{p}^{\mfrak{I}}_{0}$ on the real axis.
We next suppose that $x$ lies on the $i$-th strand of the $N$-tuple of SLE paths, i.e., $x \in \eta^{(i)}(0,T)$.
Although the strand $\eta^{(i)}(0,T)$ is not a smooth curve, we temporarily let $\theta(x)$ be
the angle of the tangent line of $\eta^{(i)}(0,T)$ at $x$ (see Fig.~\ref{fig:angle}).
Then we have $\lim_{z\to x +}\arg\pr{g}_{T}(z)=\pi-\theta(x)$ where $z$ approaches $x$ from the right,
while $\lim_{z\to x-}\arg\pr{g}_{T}(z)=-\theta(x)$ where $z$ approaches $x$ from the left.
Thus
\begin{align*}
	\lim_{z\to x+}\mfrak{p}^{\mfrak{I}}_{T}(z)&=-\frac{2\pi}{\sqrt{\kappa}}(N-i)+\chi\theta(x)-\chi\pi, \\
	\lim_{z\to x-}\mfrak{p}^{\mfrak{I}}_{T}(z)&=-\frac{2\pi}{\sqrt{\kappa}}(N-i+1)+\chi\theta(x)
\end{align*}
(see the upper- and lower-right pictures of Fig.~\ref{fig:coupling_forward}).
Although these two values themselves are not well-defined because $\theta(x)$ is not,
their difference does not depend on $\theta(x)$ yielding
\begin{equation*}
	\lim_{z\to x+}\mfrak{p}^{\mfrak{I}}_{T}(z)-\lim_{z\to x-}\mfrak{p}^{\mfrak{I}}_{T}(z)=\frac{\sqrt{\kappa}\pi}{2},
\end{equation*}
where we have used the relation $\chi=\frac{2}{\sqrt{\kappa}}-\frac{\sqrt{\kappa}}{2}$.
Thus we will see that $\mfrak{p}^{\mfrak{I}}_{T}$ has discontinuity by $\sqrt{\kappa}\pi/2$
from the left side to the right side of a strand.
Conversely, from Proposition \ref{prop:stationarity_flowline}, one can find in an instance $h$ of $\mfrak{p}^{\mfrak{I}}_{0}=H^{\bm{x},\beta,\mfrak{I}}_{\mbb{H}}$
strands evolving from $x_{i}$, $i=1,\dots, N$
so that the value of $h$ has discontinuity by $\sqrt{\kappa}\pi/2$ from the left side to the right side of a strand.
Following the argument in ~\cite{SchrammSheffield2013,MillerSheffield2016a,Sheffield2016},
these strands can be regarded as the flow lines of the vector field $e^{\sqrt{-1}H^{\bm{x},\beta,\mfrak{I}}_{\mbb{H}}/\chi}$ starting from $x_{i}$, $i=1,\dots, N$.
Moreover, the law of these strands agrees with the one of the slits $\{\eta^{(i)}\}_{i=1}^{N}$ determined by the multiple SLE$_{\kappa}$.
Therefore, Theorem \ref{thm:answer_flowline} is proved.

\section{Concluding remarks}
\label{sect:concluding_remarks}
As conclusion, we make some discussions and remarks on related topics and future directions.
\subsection{Pathwise uniqueness of coupling}
In the present paper, we found couplings between GFFs and multiple SLEs in suitable senses. For both of the conformal welding problem and the flow line problem, we heuristically argued in Subsects. \ref{subsect:conformal_welding_problem} and \ref{subsect:arguments_flow_lines} that multiple SLEs are uniquely determined by the GFFs. In ~\cite{Dubedat2009,MillerSheffield2016a}, the authors rigorously proved this kind of pathwise uniqueness for a single curve with assistance of the dual SLE. It is an interesting and important future direction to consider how to generalize their arguments to our case, where we treat multiple curves at once.

\subsection{Dynamics of the Dyson model}
\label{subsec:dyson_model}
The conformal welding problem introduced in Section \ref{subsec:conformal_welding} requires precise description of the
statistical behavior of slits $\{\eta^{(i)}\}_{i=1}^N$ in $\mbb{H}$, which are non-intersecting.
Our strategy to solve this problem is to identify $\{\eta^{(i)}\}_{i=1}^N$ in $\mbb{H}$ with ``$N$-tuple of SLE curves."
Based on Theorem \ref{thm:multple_Loewner} from ~\cite{RothSchleissinger2017} for the deterministic multiple Loewner equation (\ref{eq:multiple_Loewner}),
we have reduced the problem for $\{\eta^{(i)}\}_{i=1}^N$ in $\mbb{H}$ to the problem to find a stochastic process
$\bm{X}_{t}=(X^{(1)}_{t}, \dots, X^{(N)}_{t}) \in \mathbb{R}^N$, $t \geq 0$, of $N$ particles on $\mathbb{R}$.
Then we have assumed that $\bm{X}_t$, $t \geq 0$ satisfies a system of SDEs in a general form (\ref{eq:driving_process_Schramm}). 
There, the drift terms $\int_0^t F^{(i)}(\bm{X}_s) ds$, $i=1, \dots, N$, $t \geq 0$, which determine the interaction among $N$ particles, are arbitrary. 

In the formulation of Problem \ref{prob:1}, we have assumed that the slits $\{\eta^{(i)}\}_{i=1}^{N}$ are non-intersecting,
which implies that the stochastic process $\bm{X}_{t}$, $t\ge 0$ must be non-colliding.
The main result in the present paper given by Theorem \ref{thm:answer_welding} states that the solution can be given by a proper time
change of the Dyson model, which may be the most studied process in non-colliding particle systems in probability theory and random matrix theory
(see, for instance ~\cite{Forrester2010, AndersonGuionnetZeitouni2010,Katori2015}).
As shown in (\ref{eq:Dyson_model}), in the Dyson model, the repulsive force acts between any pair of particles, whose strength is proportional
to the inverse of distances between the particles and the proportionality coefficient is given by $\beta/2,$ $\beta \in (0, \infty)$.
We note that the $\gamma$-QS-$(N+1)$-MBPs of the $(\bm{X}, \alpha)$- standard type has the set of parameters
$\gamma \in (0,2)$ and $\alpha_i$, $i=1, \dots, N$,
and the multiple SLE does one parameter $\kappa \in (0, \infty)$.
Theorem \ref{thm:answer_welding} determines the relations among them as
\begin{equation*}
\kappa = \gamma^2, \quad
\alpha_i=\frac{2}{\gamma} \quad (i=1, \dots, N), \quad
\beta=\frac{8}{\kappa}.
\end{equation*}
The equality $\kappa=\gamma^2$ is the same as that given in \cite{Sheffield2016} and $\alpha_i=2/\gamma$, $i=1, \dots, N$
are a simple $N$-variable extension of his result $\alpha_1=2/\gamma$ for the original conformal welding problem with two marked boundary points.
The equality $\beta=8/\kappa$ is found in the literatures \cite{Cardy2003a,Cardy2003b,Cardy2004,BauerBernardKytola2005},
but its derivations heavily depended on CFT and the so-called group theoretical formulation of SLE \cite{BauerBernard2003,BauerBernard2004,SK2018} (see Appendix \ref{app:driving_processes}).
Our derivation given in the proof of Lemma \ref{lem:key_martingale} is purely probability theoretical and simple.
Since $\gamma\in (0,2)$ for the LQG, we have $\beta> 2$.
Therefore, the resulting time change of the Dyson model indeed is non-colliding.

\begin{figure}[t]
\centering
\includegraphics[width=\hsize]{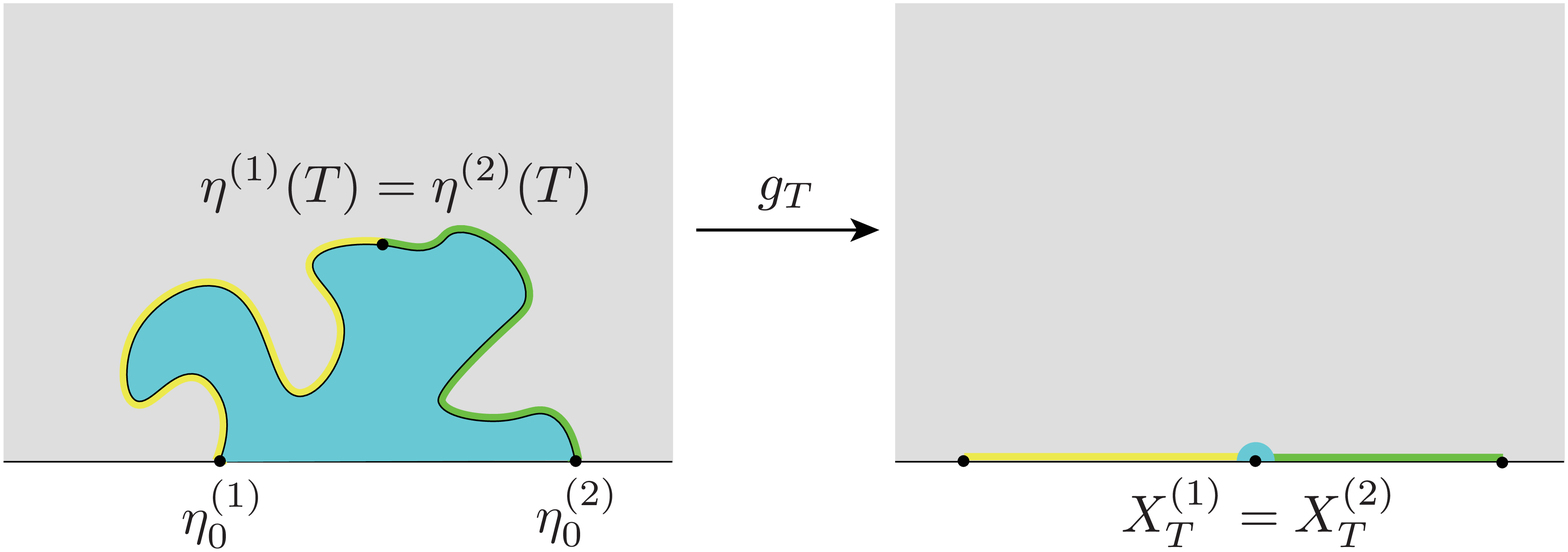}
\caption{(I) Tips collide}
\label{fig:tips_collide}
\vspace{1cm}
\includegraphics[width=\hsize]{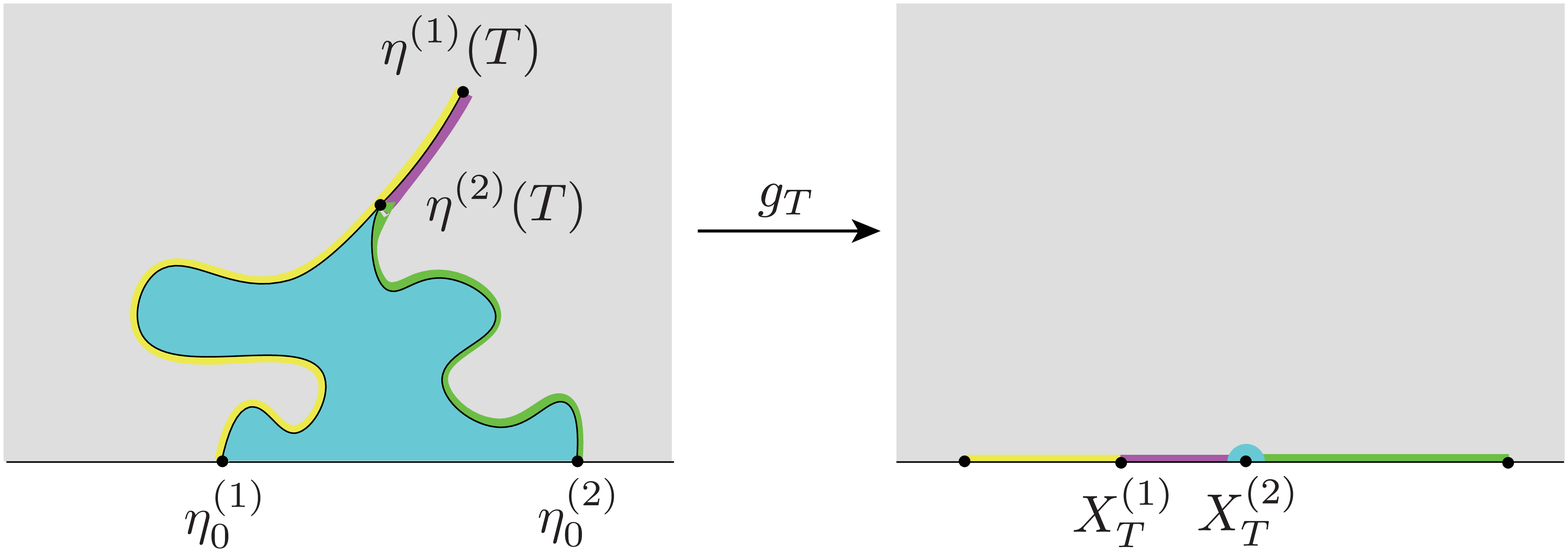}
\caption{(II) A tip collides with an already existing slit}
\label{fig:tip_collides_slit}
\end{figure}

Note that the non-colliding condition $\beta\ge 1$ for the Dyson model \cite{RogersShi1993,CepaLepingle1997,GraczykMalecki2013,GraczykMalecki2014} corresponds to $\kappa\in (0,8]$ for the multiple SLE,
while when $\kappa > 4$ the multiple SLE curves will collide with each other and become self-intersecting in $\overline{\mathbb{H}}$.
In the region $\kappa\in (4, 8]$, the correspondence between slits and driving processes will be explained as follows.
If two slits collide with each other, this event is classified into two cases.
(I) Two tips of slits collide with each other (see Fig.~\ref{fig:tips_collide}).
(II) A tip collides with an already existing slit (see Fig.~\ref{fig:tip_collides_slit}).
Since each of the driving processes is the image of a tip of a slit under the uniformization map,
two of the driving processes collide with each other when the event (I) occurs.
On the other hand, when the event (II) occurs, the driving processes are non-colliding even though the corresponding SLE slits are colliding.
Following this argument, we could expect that the Dyson model will fall into three classes.
When $\beta\ge 2$, the particles are non-colliding and the corresponding SLE slits are non-intersecting.
When $\beta\in [1,2)$, the particles are non-colliding, but the event (II) almost surely occurs.
When $\beta \in (0,1)$, the particles collide, and correspondingly, the event (I) almost surely occurs.
Though it is known \cite{RogersShi1993,CepaLepingle1997,GraczykMalecki2013,GraczykMalecki2014} that the colliding/non-colliding transition occurs at $\beta=1$,
the possible phenomenon that the characteristics of the Dyson model changes at $\beta=2$ has not been well studied so far.
It would be an interesting future direction to find a property that distinguishes the Dyson model of $\beta\in [1,2)$ and that of $\beta\ge 2$.
We will also have to make discussions analogous to that in ~\cite{RohdeSchramm2005} to settle this classification.

\subsection{Other driving processes}
As interacting particle systems related with random matrix theory,
a variety of non-colliding particle systems have been studied (see, for instance, ~\cite{KatoriTanemura2004}).
We hope that we can address the conformal welding problems in {\it other situations}
and, in solving them, interesting relations between non-colliding particle systems and multiple SLEs will be discovered.
We will depict examples of such {\it other situations}
for which the conformal welding problem is solvable and produces an another type of driving processes.

\subsubsection{Inhomogeneous systems}
The setting (\ref{eq:reverse_multiple_SLE}) and (\ref{eq:driving_process_reverse}) for the conformal welding problem can be generalized as follows:
\begin{align}
\label{eq:inhomo_Loewner}
	\frac{d}{dt}f^{T}_{t}(z)&=-\sum_{i=1}^{N}\frac{2\lambda_{i}}{f^{T}_{t}(z)-Y^{(i)}_{T;t}},\ \ t\in [0,T],\ \ f^{T}_{0}(z)=z\in\mbb{H},\\
\label{eq:inhomo_driving}
	dY^{(i)}_{T;t}&=\sqrt{\kappa_{i}}dB_{t}^{(i)}-F^{(i)}(\bm{Y}_{T;t})dt,\ \ t\in [0,T],\ \ i=1,\dots, N,
\end{align}
where $\lambda_{i}>0$, $\kappa_{i}>0$, $i=1,\dots, N$ with $\sum_{i=1}^{N}\lambda_{i}=N$ ~\cite{Schleissinger2013, RothSchleissinger2017,delMonacoHottaSchleissinger2018}.
Let
\begin{equation*}
	\mfrak{h}_{t}^{\ast}(z)=\sum_{i=1}^{N}\alpha_{i}\log (f^{T}_{t}(z)-Y^{(i)}_{T;t})+Q\log f^{T\prime}_{t}(z),\ \ z\in \mbb{H},\ \ t\in [0,T],
\end{equation*}
where $\alpha_{i}$, $i=1,\dots, N$ are indeterminate real numbers and $Q=\frac{2}{\gamma}+\frac{\gamma}{2}$.
By the similar calculation to that given in the proof of Lemma \ref{lem:key_martingale}, we can show that
\begin{align*}
	d\mfrak{h}^{\ast}_{t}(z)=
	&-\sum_{i=1}^{N}\frac{\alpha_{i}\kappa_{i}}{f^{T}_{t}(z)-Y^{(i)}_{T;t}}dB^{(i)}_{t}+\sum_{i=1}^{N}\frac{2C(\alpha_{i},\kappa_{i},\lambda_{i},\gamma)}{(f^{T}_{t}(z)-Y^{(i)}_{T;t})^{2}}dt\\
	&+\sum_{i=1}^{N}\frac{1}{f^{T}_{t}(z)-Y^{(i)}_{T;t}}\left(\alpha_{i}F^{(i)}(\bm{Y}_{T;t})-2\sum_{\substack{j=1\\j\neq i}}^{N}\frac{\alpha_{i}\lambda_{j}+\alpha_{j}\lambda_{i}}{Y^{(i)}_{T;t}-Y^{(j)}_{T;t}}\right)dt,\ \ t\in [0,T],
\end{align*}
with
\begin{equation}
\label{eq:inhomo_drift_const}
	C(\alpha_{i},\kappa_{i},\lambda_{i},\gamma)=-\left(\lambda_{i}+\frac{\kappa_{i}}{4}\right)\alpha_{i}+Q\lambda_{i},\ \ i=1,\dots, N.
\end{equation}
Hence if
\begin{equation}
\label{eq:inhomo_martingale_const}
	C(\alpha_{i},\kappa_{i},\lambda_{i},\gamma)=0,\ \ i=1,\dots, N
\end{equation}
and
\begin{align}
\label{eq:inhomo_martingale_function}
	F^{(i)}(\bm{x})=
	&\frac{2}{\alpha_{i}}\sum_{\substack{j=1\\ j\neq i}}^{N}\frac{\alpha_{i}\lambda_{j}+\alpha_{j}\lambda_{i}}{x_{i}-x_{j}}\\
	=&2\sum_{\substack{j=1\\ j\neq i}}^{N}\frac{\lambda_{j}}{x_{i}-x_{j}}+\frac{2\lambda_{i}}{\alpha_{i}}\sum_{\substack{j=1\\ j\neq i}}^{N}\frac{\alpha_{j}}{x_{i}-x_{j}},\ \ i=1,\dots, N, \notag
\end{align}
then $\mfrak{h}^{\ast}_{t}(z)$, $z\in\mbb{H}$, $t\in [0,T]$ becomes a local martingale.
When $\lambda_{i}=1$, $\kappa_{i}= \kappa$, $i=1,\dots, N$,
(\ref{eq:inhomo_martingale_const}) with (\ref{eq:inhomo_drift_const}) gives
\begin{equation*}
	\alpha_{i}=\frac{4Q}{4+\kappa}=\frac{2Q}{\sqrt{\kappa}(\frac{2}{\sqrt{\kappa}}+\frac{\sqrt{\kappa}}{2})},\ \ i=1,\dots, N.
\end{equation*}
That is, the weights to the marked boundary points are homogeneous.
When we further assume the relation $\kappa=\gamma^{2}$,
then $\alpha_{i}=\frac{2}{\gamma}$ as we have seen in one of our main theorems (Theorem \ref{thm:answer_welding}).

Inhomogeneous setting of the conformal welding problem with $\alpha_{i}\neq\alpha_{j}$, $i\neq j$ in general
(as well as the inhomogeneous flow line problem with $\beta_{i}\neq \beta_{j}$, $i\neq j$ in general)
will be studied in which inhomogeneous multiple SLE (\ref{eq:inhomo_Loewner})
driven by inhomogeneous interacting particles on $\mbb{R}$ (\ref{eq:inhomo_driving}) shall be analyzed
under the conditions (\ref{eq:inhomo_martingale_const}) with (\ref{eq:inhomo_drift_const}), and (\ref{eq:inhomo_martingale_function})
to solve the problems.

\subsubsection{Multiple quadrant SLE and the Wishart process}
In this paper, we have formulated the conformal welding problem for a $\gamma$-QS-$(N+1)$-MBPs
$[\mbb{H},H^{\bm{X},\alpha}_{\mbb{H}},(\bm{X},\infty)]_{\gamma}$ of the $(\bm{X},\alpha)$-standard type.
In solving this, we have adopted the form of multiple Loewner equation (\ref{eq:multiple_Loewner})
and assumed that the set of driving processes is determined by the system of SDEs (\ref{eq:driving_process_Schramm})
with drift functions $\{F^{(i)}(\bm{x})\}_{i=1}^{N}$
motivated by preceding works \cite{BauerBernardKytola2005,RothSchleissinger2017}.
As a result, we have found that the parameters $\kappa=\gamma^{2}$ and $\alpha_{i}=\frac{2}{\gamma}$ are determined
and the functions are chosen as (\ref{eq:function_canonical})
to obtain a one-parameter family $\{[\mbb{H},H^{\bm{X}_{T},\alpha}_{\mbb{H}},(\bm{X}_{T},\infty)]_{\gamma}:T\in (0,\infty)\}$ 
of $\gamma$-QSs-$(N+1)$-MBPs,
for each of which the conformal welding problem is solvable.

Notice that there is room for changing the model of uniformization maps.
As a generalized multiple Loewner equation for $N$ slits, we consider the following form
\begin{equation}
	\label{eq:multiple_Loewner_general}
	\frac{d}{dt}g_{t}(z)=\Psi(g_{t}(z),\bm{X}_{t}),\ \ t\ge 0,\ \ g_{0}(z)=z,
\end{equation}
where $\Psi(z,\bm{x})$ is a suitable functions of $z$ and $\bm{x}=(x_{1},\dots, x_{N})$,
and $\{\bm{X}_{t}=(X_{t}^{(1)},\dots, X_{t}^{(N)}):t\ge 0\}$ is a set of driving processes.
We do not specify the domain of definition for the function $\Psi(z,\bm{x})$ since it will depend on models.
When we take
\begin{equation*}
	\Psi(z,\bm{x})=\sum_{i=1}^{N}\frac{2}{z-x_{i}},\ \ z\in\mbb{H},\ \ \bm{x}\in\mbb{R}^{N},
\end{equation*}
the associated Loewner equation (\ref{eq:multiple_Loewner_general}) reduces to (\ref{eq:multiple_Loewner}).

Let us see the case for another choice of $\Psi(z,\bm{x})$.
Let $\mbb{O}:=\{z\in\mbb{C}|\mrm{Re}z>0,\ \mrm{Im}z>0\}$ be an orthant in $\mbb{C}$.
We adopt
\begin{equation*}
	\Psi(z,\bm{x})=\Psi_{\mbb{O}}(z,\bm{x}):=\sum_{i=1}^{N}\left(\frac{2}{z-x_{i}}+\frac{2}{z+x_{i}}\right)+\frac{4\delta}{z},\ \ z\in\mbb{O},\ \ \bm{x}\in (\mbb{R}_{>0})^{N}.
\end{equation*}
Here $\delta\in\mbb{R}$ is a parameter and $\mbb{R}_{>0}=\{x\in\mbb{R}|x>0\}$ is the set of positive real numbers.
The associated Loewner equation (\ref{eq:multiple_Loewner_general}) becomes
\begin{align}
\label{eq:quadrant_multiple_SLE}
	\frac{d}{dt}g_{t}(z)&=\sum_{i=1}^{N}\left(\frac{2}{g_{t}(z)-X^{(i)}_{t}}+\frac{2}{g_{t}(z)+X^{(i)}_{t}}\right)+\frac{4\delta}{g_{t}(z)},\ \ t\ge 0,\\
	g_{0}(z)&=z\in\mbb{O}. \notag
\end{align}
We again assume that the set of driving processes $\{\bm{X}_{t}=(X^{(1)}_{t},\dots, X^{(N)}_{t})\in (\mbb{R}_{>0})^{N}:t\ge 0\}$ solves the sytem of SDEs
\begin{equation*}
	dX^{(i)}_{t}=\sqrt{\kappa}dB^{(i)}_{t}+F^{(i)}(\bm{X}_{t})dt,\ \ t\ge 0,\ \ i=1,\dots, N,
\end{equation*}
where $\kappa>0$ is a parameter, 
$\{B^{(i)}_{t}:t\ge 0\}_{i=1}^{N}$ are mutually independent standard Brownian motions
and $\{F^{(i)}(\bm{x})\}_{i=1}^{N}$ are suitable functions of $\bm{x}=(x_{1},\dots, x_{N})$ 
so that $\bm{X}_{t}$ lies in $(\mbb{R}_{>0})^{N}$ for all $t\ge 0$.
The equation (\ref{eq:quadrant_multiple_SLE}) is the multiple version of the {\it quadrant Loewner equation}
considered in ~\cite{Takebe2014}.
We assume that, if the initial value of $\bm{X}_{t}$ satisfies $0<X^{(1)}_{0}<X^{(2)}_{0}<\cdots <X^{(N)}_{0}$,
each realization determines $N$ non-colliding and non-intersecting slits $\{\eta^{(i)}:(0,\infty)\to\mbb{O}\}_{i=1}^{N}$
anchored on $\mbb{R}_{>0}$: $\eta^{(i)}_{0}=X^{(i)}_{0}$, $i=1,\dots, N$,
i.e., $g_{t}(\cdot)$, $t\ge 0$, becomes a uniformization map
\begin{equation*}
	g_{t}:\mbb{O}^{\eta}_{t}:=\mbb{O}\Big\backslash \bigcup_{i=1}^{N}\eta^{(i)}(0,t]\to\mbb{O}.
\end{equation*}

Let $\gamma\in (0,2)$.
For $N$ points $\bm{x}=(x_{1},\dots, x_{N})$, where $0<x_{1}<x_{2}<\cdots <x_{N}$ and
an $N$-tuple of real numbers $\alpha=(\alpha_{1},\dots, \alpha_{N})$,
we define the following function on $\mbb{O}$:
\begin{equation*}
	w^{\bm{x},\alpha}_{\mbb{O}}(z)=\sum_{i=1}^{N}\alpha_{i}\left(\log |z-x_{i}|+\log |z+x_{i}|\right)+Q\log |z|,
\end{equation*}
where $Q=\frac{2}{\gamma}+\frac{\gamma}{2}$,
and a $C_{\nabla}^{\infty}(\mbb{O})^{\prime}$-valued random field 
$H^{\bm{x},\alpha}_{\mbb{O}}:=H^{\mrm{Fr}}_{\mbb{O}}+w^{\bm{x},\alpha}_{\mbb{O}}$.
For a random $N$-point configuration $\bm{X}=(X_{1},\dots, X_{N})$ valued in $\mrm{Conf}^{<}_{N}(\mbb{R}_{>0})$,
we can see that
\begin{equation*}
	\left[\mbb{O},H^{\bm{X},\alpha}_{\mbb{O}},(\bm{X},\infty)\right]_{\gamma}
	=\left[\mbb{H},H^{\bm{X}^{2},\alpha}_{\mbb{H}},(\bm{X}^{2},\infty)\right]_{\gamma},
\end{equation*}
where $\bm{X}^{2}=((X_{1})^{2},\dots, (X_{N})^{2})$, is of the $(\bm{X}^{2},\alpha)$-standard type.

We define the {\it Wishart process} on parameters $\beta>0$ and $\nu>-1$ as a solution of the system of SDEs on $X^{\mrm{W}_{\beta,\nu}(i)}_{t}$, $t\ge 0$, $i=1,\dots, N$ such that \cite{Bru1991,KatoriTanemura2004}
\begin{align*}
	dX^{\mrm{W}_{\beta,\nu}(i)}_{t}=&dB^{(i)}_{t}+\Biggl[\frac{\beta (\nu+1)-1}{2}\frac{1}{X_{t}^{\mrm{W}_{\beta,\nu}(i)}} \\
	&+\frac{\beta}{2}\sum_{\substack{j=1\\j\neq i}}^{N}\left(\frac{1}{X^{\mrm{W}_{\beta,\nu}(i)}_{t}-X^{\mrm{W}_{\beta,\nu}(j)}_{t}}+\frac{1}{X^{\mrm{W}_{\beta,\nu}(i)}_{t}+X^{\mrm{W}_{\beta,\nu}(j)}_{t}}\right)\Biggr]dt, \\
	&t\ge 0,\ \ i=1,\dots, N.
\end{align*}

Using the function $\Psi_{\mbb{O}}(z,\bm{x})$ as a model of uniformization maps,
we can obtain solutions to the conformal welding problem.
\begin{thm}
\label{thm:quadrand_coupling}
Let $\gamma\in (0,2)$ and $N\in\mbb{Z}_{\ge 1}$.
Suppose that $\{\bm{X}_{t}=(X_{t}^{(1)},\dots, X_{t}^{(N)}):t\ge 0\}$ is a time change of the Wishart process
$\{\bm{X}^{\mrm{W}_{8/\kappa,\delta}}_{\kappa t}:t\ge 0\}$
starting at a deterministic initial state $\bm{X}_{0}=\bm{x}\in\mrm{Conf}^{<}_{N}(\mbb{R}_{>0})$.
Then, at each time $T\in (0,\infty)$, the conformal welding problem for $\left[\mbb{O}, H^{\bm{X}_{T},\alpha}_{\mbb{O}},(\bm{X}_{T},\infty)\right]_{\gamma}$
with $(\alpha_{1},\dots,\alpha_{N})=(\frac{2}{\gamma},\dots,\frac{2}{\gamma})$ is solved as follows:
\begin{enumerate}
\item 	The solution of the Loewner equation (\ref{eq:quadrant_multiple_SLE}) driven by the time change of the Wishart process
		$\{\bm{X}^{\mrm{W}_{8/\kappa,\delta}}_{\kappa t}:0\le t \le T\}$ gives a solution to Problem \ref{prob:1}.
		In other words, $g_{T}^{-1}:\mbb{O} \to \mbb{O}^{\eta}_{T}$ is the desired conformal equivalence.
\item 	The probability law for resulting slits $\{\eta^{(i)}\}_{i=1}^{N}$ is the one for the quadrant multiple SLE$_{\kappa}$.
		This gives a solution to Problem \ref{prob:2}.
\item 	Problem \ref{prob:3} is answered positively with $\bm{\eta}_{0}=\bm{x}$ a.s.
\end{enumerate}
\end{thm}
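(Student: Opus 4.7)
The proof of Theorem \ref{thm:quadrand_coupling} should mirror the structure of the proof of Theorem \ref{thm:answer_welding}. The plan is to reduce the theorem to an analogue of Proposition \ref{prop:stationarity_welding}: namely, that the cutting operation $\mfrak{A}^{\mbb{O},(\bm{X}_{t}:0\le t\le T)}$ associated with the quadrant multiple SLE satisfies
\begin{equation*}
\mfrak{A}^{\mbb{O},(\bm{X}_{t}:0\le t\le T)}(H_{\mbb{O}}^{\bm{X},\alpha},\bm{X})\overset{\mrm{(law)}}{=}\left(H_{\mbb{O}}^{\bm{X}_{T},\alpha},\bm{X}_{T}\right)
\end{equation*}
when $\alpha_{i}=2/\gamma$, $\kappa=\gamma^{2}$, and $\{\bm{X}_{t}\}$ is the time change of the Wishart process $\{\bm{X}^{\mrm{W}_{8/\kappa,\delta}}_{\kappa t}\}$. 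Once this is proved, the rest of Theorem \ref{thm:quadrand_coupling} follows from the same general arguments that derive Theorem \ref{thm:answer_welding} from Proposition \ref{prop:stationarity_welding}, with the quadrant Loewner equation (\ref{eq:quadrant_multiple_SLE}) replacing (\ref{eq:multiple_Loewner}).

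To implement this, I would introduce the reverse flow $f^{T}_{t}$ on $\mbb{O}$ solving
\begin{equation*}
\frac{d}{dt}f^{T}_{t}(z)=-\sum_{i=1}^{N}\left(\frac{2}{f^{T}_{t}(z)-Y_{T;t}^{(i)}}+\frac{2}{f^{T}_{t}(z)+Y_{T;t}^{(i)}}\right)-\frac{4\delta}{f^{T}_{t}(z)},\ \ f^{T}_{0}(z)=z\in\mbb{O},
\end{equation*}
with $Y^{(i)}_{T;t}=X^{(i)}_{T-t}$, and verify $f^{T}_{T}=g_{T}^{-1}$ by the same uniqueness argument as in Lemma \ref{lem:reverse_flow_is_inverse_map}. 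The interpolating process is then $\mfrak{p}_{t}^{\mbb{O}}=\mfrak{h}_{t}^{\mbb{O}}+H^{\mrm{Fr}}_{\mbb{O}}\circ f^{T}_{t}$, where $\mfrak{h}_{t}^{\mbb{O}}:=\mrm{Re}\,\mfrak{h}_{t}^{\mbb{O},\ast}$ with
\begin{equation*}
\mfrak{h}_{t}^{\mbb{O},\ast}(z)=\frac{2}{\gamma}\sum_{i=1}^{N}\bigl[\log(f^{T}_{t}(z)-Y_{T;t}^{(i)})+\log(f^{T}_{t}(z)+Y_{T;t}^{(i)})\bigr]+Q\log f^{T}_{t}(z)+Q\log f^{T\prime}_{t}(z),
\end{equation*}
so that $\mfrak{p}^{\mbb{O}}_{0}=H^{\bm{X}_{T},\alpha}_{\mbb{O}}$ and $\mfrak{p}^{\mbb{O}}_{T}=f^{T\ast}_{T}H^{\bm{X},\alpha}_{\mbb{O}}|_{\mbb{O}^{\eta}_{T}}$. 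Applying It\^o's formula using the reverse SDE $dY^{(i)}_{T;t}=\sqrt{\kappa}dB^{(i)}_{t}-F^{(i)}(\bm{Y}_{T;t})dt$, one collects the drift and martingale contributions of $\mfrak{h}^{\mbb{O},\ast}_{t}(z)$.

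The main obstacle is the It\^o bookkeeping in this computation: compared with Lemma \ref{lem:key_martingale}, there are now cross terms of the form $1/(f^{T}_{t}(z)\pm Y^{(i)}_{T;t})\cdot 1/(Y^{(i)}_{T;t}\pm Y^{(j)}_{T;t})$ and singular terms at $f^{T}_{t}(z)=0$ coming from the $4\delta/z$ contribution in the Loewner drift. The partial fraction identities
\begin{equation*}
\frac{1}{(f-a)(f-b)}=\frac{1}{a-b}\left(\frac{1}{f-a}-\frac{1}{f-b}\right),\qquad \frac{1}{(f-a)(f+a)}=\frac{1}{2a}\left(\frac{1}{f-a}-\frac{1}{f+a}\right)
\end{equation*}
should reorganise these into a drift vanishing precisely when $Q=\frac{2}{\gamma}+\frac{\gamma}{2}$, $\kappa=\gamma^{2}$, and
\begin{equation*}
F^{(i)}(\bm{x})=\frac{8(\delta+1)-\kappa}{2x_{i}}+4\sum_{\substack{j=1\\j\neq i}}^{N}\left(\frac{1}{x_{i}-x_{j}}+\frac{1}{x_{i}+x_{j}}\right),
\end{equation*}
which is the drift of the time change $\{\bm{X}^{\mrm{W}_{8/\kappa,\delta}}_{\kappa t}\}$ of the Wishart process. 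Writing $\beta=8/\kappa$, $\nu=\delta$ reproduces the drift $\tfrac{\beta(\nu+1)-1}{2x_{i}}+\tfrac{\beta}{2}\sum_{j\neq i}\bigl(\tfrac{1}{x_{i}-x_{j}}+\tfrac{1}{x_{i}+x_{j}}\bigr)$ after the time rescaling.

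Finally, I would verify that the quadratic variation matches the free boundary Green function on $\mbb{O}$, which by the method of images takes the form
\begin{equation*}
G^{\mrm{Fr}}_{\mbb{O}}(z,w)=-\log|z-w|-\log|z-\overline{w}|-\log|z+w|-\log|z+\overline{w}|.
\end{equation*}
A direct calculation analogous to the proof of the Green-function identity in Section \ref{sect:coupling_reverse} yields $d\braket{\mfrak{h}^{\mbb{O}}(z),\mfrak{h}^{\mbb{O}}(w)}_{t}=-dG^{\mrm{Fr}}_{\mbb{O}^{\eta}_{t}}(z,w)$ with $G^{\mrm{Fr}}_{\mbb{O}^{\eta}_{t}}(z,w):=G^{\mrm{Fr}}_{\mbb{O}}(f^{T}_{t}(z),f^{T}_{t}(w))$; the four Green-function pieces are precisely the four products of $\mrm{Re}\,2/(f^{T}_{t}(z)\pm Y^{(i)}_{T;t})$ factors required. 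Pairing with a test function $\rho\in C^{\infty}_{\nabla}(\mbb{O})$ and using the Gaussianity of both $(\mfrak{h}^{\mbb{O}}_{T},\rho)$ (a time-changed Brownian motion conditional on $\bm{X}_{t}$) and the conformally invariant $(H^{\mrm{Fr}}_{\mbb{O}}\circ f^{T}_{T},\rho)$, and noting that these two are conditionally independent, gives $\mfrak{p}^{\mbb{O}}_{T}\overset{\mrm{(law)}}{=}\mfrak{p}^{\mbb{O}}_{0}$ as $C^{\infty}_{\nabla}(\mbb{O})^{\prime}$-valued random fields, completing the proof.
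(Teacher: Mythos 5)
Your strategy mirrors the paper's proof of Theorem \ref{thm:answer_welding} and is the natural one: introduce the reverse flow on $\mbb{O}$, interpolate via $\mfrak{p}^{\mbb{O}}_{t}=\mfrak{h}^{\mbb{O}}_{t}+H^{\mrm{Fr}}_{\mbb{O}}\circ f^{T}_{t}$, show $\mfrak{h}^{\mbb{O},\ast}_{t}(z)$ is a local martingale, match its quadratic variation with the Neumann Green function on $\mbb{O}$ (your image formula for $G^{\mrm{Fr}}_{\mbb{O}}$ is correct, and consistent with $G^{\mrm{Fr}}_{\mbb{O}}(z,w)=G^{\mrm{Fr}}_{\mbb{H}}(z^{2},w^{2})$), and conclude by Gaussianity and conditional independence. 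Note the paper itself states Theorem \ref{thm:quadrand_coupling} without proof and explicitly defers the details to a companion paper, so the only thing to check is whether your It\^o bookkeeping actually closes.

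It does not: the $1/x_{i}$ coefficient you announce for $F^{(i)}$ carries the wrong sign on $\kappa$. The drift of $\mfrak{h}^{\mbb{O},\ast}_{t}(z)$ vanishes exactly when
\begin{equation*}
F^{(i)}(\bm{x})=\frac{8(\delta+1)+\kappa}{2x_{i}}+4\sum_{\substack{j=1\\ j\neq i}}^{N}\left(\frac{1}{x_{i}-x_{j}}+\frac{1}{x_{i}+x_{j}}\right),
\end{equation*}
with $+\kappa$, not $-\kappa$. One sees this already for $N=1$, $\delta=0$: set $a=1/(f^{T}_{t}(z)-Y)$, $b=1/(f^{T}_{t}(z)+Y)$; after the $(a^{2}+b^{2})$ terms cancel via $2Q=\tfrac{4}{\gamma}+\gamma$, the drift reduces to $(a-b)\bigl[\tfrac{2F}{\gamma}-\tfrac{4}{\gamma Y}-\tfrac{2Q}{Y}\bigr]$, using $4ab=\tfrac{2}{Y}(a-b)$ and $\tfrac{a+b}{f^{T}_{t}(z)}=\tfrac{a-b}{Y}$; this vanishes iff $F=\tfrac{2}{Y}+\tfrac{\gamma Q}{Y}=\tfrac{8+\kappa}{2Y}$. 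For general $N$ and $\delta$ the coefficient of $\sum_{i}\tfrac{a_{i}-b_{i}}{Y_{i}}$ collects $-\tfrac{4}{\gamma}$ from $A^{2}$ (with $A=\sum_{i}(a_{i}+b_{i})$), $-\tfrac{8\delta}{\gamma}$ from the $4\delta/f$ Loewner drift, and $-2Q$ from the $-2QA/f$ piece, totaling $-\tfrac{8(1+\delta)}{\gamma}-\gamma$, hence $+\kappa$ in $F^{(i)}$. So the martingale condition selects the time-changed Wishart process with $\nu=\delta+\kappa/4$ (solving $8(\nu+1)-\kappa=8(\delta+1)+\kappa$), not $\nu=\delta$. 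As written your argument therefore does not establish the theorem with the stated parameter $\mrm{W}_{8/\kappa,\delta}$; you would need to carry $\nu=\delta+\kappa/4$ through the remainder of the argument, or flag that the theorem's notation requires this correction.
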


From the above observation, we could say that interacting particle systems such as the Dyson model and the Wishart process
are associated with models $\Psi(z,\bm{x})$ of uniformization maps.
Along this line, we could expect a new classification of interacting particle systems from a perspective of the Loewner theroy
and coupling with GFF.

A detail of this subject including the associated flow line problem will be published elsewhere~\cite{KatoriKoshida2020}.

\subsection{Other variants of SLE}
As we have noted above, there is room for variants of multiple SLEs in the form of (\ref{eq:multiple_Loewner_general}).
Indeed, the radial and dipole multiple SLEs are special cases of (\ref{eq:multiple_Loewner_general}).
It was shown in ~\cite{SchrammWilson2005} that, in the case of $N=1$, the chordal, radial and dipole SLEs are transformed one another by conformal mappings
in the framework of the SLE$_{\kappa;\rho}$, while the force points are allowed to be interior of the domain.
For example, the radial SLE$_{\kappa}$ is transformed to the chordal SLE$_{\kappa;\kappa-6}$ with an interior force point by a M{\"o}bius transformation.
In ~\cite{MillerSheffield2016a,Sheffield2016}, the coupling with an SLE$_{\kappa;\rho}$ and an GFF was formulated,
and our result in the present paper is also expected to be extended to the case concerning a multiple SLE$_{\kappa;\rho}$.
Then, it would be interesting to study how the transformation of these SLEs can be compatible to the coordinate transformation of
quantum surfaces in Eq.(\ref{eq:quantum_equivalence}) under the connection between the SLE and the LQG.

\subsection{The limit $N\to\infty$}
It would be interesting to consider the conformal welding problem and the flow line problem in the case with infinitely many boundary points.
In the present paper, the multiple SLE driven by an $N$-particle Dyson model arose as the solution to the conformal welding problem
for a $\gamma$-QS-$(N+1)$-MBPs of the $(\bm{X},\alpha)$-standard type.
If the method in the present paper is applicable at the limit $N\to\infty$,
it can be expected that the multiple SLE driven by an infinite dimensional Dyson model
\cite{KatoriTanemura2010, Osada2012,Osada2013,Tsai2016,OsadaTanemura2016,KawamotoOsada2018,OsadaTanemura2020} would appear in such systems.
Although the multiple SLE driven by infinitely many driving processes is not well-posed so far,
we hope that it is captured when the coupling with GFF is considered.

Another limit of $N\to\infty$ is the hydrodynamic limit of the multiple SLE \cite{delMonacoSchleissinger2016,HottaKatori2018}.
In this case, the ensemble of slits gets deterministic as $N\to\infty$.
Accordingly, a quantum surface must be subject to a boundary condition.
An interesting question is, then, how multiple slits condition the GFF on the domain
and finally impose a boundary condition.

\subsection{Discrete models converging to the present systems}
It has been reported that random planar maps converge to an SLE-decorated LQG in several topology (see ~\cite{GwynneMillerSheffield2019, HoldenSun2019, GwynneMillerSheffield2020} and references therein).
While a chordal SLE describes the scaling limit of a single interface in various critical lattice models, a multiple SLE describes
scaling limits of collections of interfaces in critical lattice models with alternating boundary conditions
(see ~\cite{BeffaraPeltolaWu2018} and references therein). 
In the present paper we introduced new kinds of continuous systems,
the {\it $\gamma$-quantum surface with $N+1$ marked boundary points} ($\gamma$-QS-$(N+1)$-MBPs) and
the {\it $\chi$-imaginary surface with $N+1$ boundary condition changing points} ($\chi$-IS-$(N+1)$-BCCPs) for $N \in \mbb{Z}_{\ge 1}$.
Both have been related with the multiple SLE driven by a Dyson model in solving the conformal welding problem and the flow line problem.
Discrete counterparts of these random systems and corresponding problems will be studied.

\appendix
\section{Construction of $\mcal{S}_{\gamma}$ and $\mcal{S}_{\gamma,N+1}$}
\label{app:construction}
In this appendix, we construct the spaces $\mcal{S}_{\gamma}$ and $\mcal{S}_{\gamma,N+1}$ as orbifolds and study their structures.

\subsection{Without marked boundary points}
Let us begin with $\mcal{S}_{\gamma}$, $\gamma\in (0,2)$.
Consider the following lift of $\mcal{S}_{\gamma}$:
\begin{equation*}
	\mcal{S}^{\mrm{univ}}:=\bigcup_{\substack{D\subsetneq \mbb{C}:\\ \mbox{\tiny 1-conn.}}}C_{\nabla}^{\infty}(D)^{\prime},
\end{equation*}
where $D\subsetneq \mbb{C}$ runs over all simply connected domains.
We consider the canonical surjection $\mcal{S}^{\mrm{univ}}\twoheadrightarrow \mcal{S}_{\gamma}$.
Notice that each component $C_{\nabla}^{\infty}(D)^{\prime}$ carries a right action
$a_{D,\gamma}:C_{\nabla}^{\infty}(D)^{\prime}\times \mrm{Aut}(D)\to C_{\nabla}^{\infty}(D)^{\prime}$ of $\mrm{Aut}(D)$
depending on the parameter $\gamma$ defined by
\begin{equation*}
	a_{D,\gamma}(h, \psi):=h\circ \psi +Q\log |\pr{\psi}|,\quad h\in C_{\nabla}^{\infty}(D),\quad \psi\in\mrm{Aut}(D),
\end{equation*}
where we set $Q=\frac{2}{\gamma}+\frac{\gamma}{2}$.
The chain rule ensures that $a_{D, \gamma}$ defines a right action of the group.
We write the quotient space $C_{\nabla}^{\infty}(D)^{\prime}/a_{D, \gamma}$ as $C_{\nabla}^{\infty}(D)^{\prime}_{\gamma-\mrm{red}}$
(we read ``$\gamma$-red" as ``$\gamma$-reduced"), and set
\begin{equation*}
	\widetilde{\mcal{S}_{\gamma}}:=\bigcup_{\substack{D\subsetneq \mbb{C}:\\ \mbox{\tiny 1-conn.}}}C_{\nabla}^{\infty}(D)^{\prime}_{\gamma-\mrm{red}}.
\end{equation*}

Let us introduce a groupoid $\mcal{G}=(\mcal{G}_{0},\mcal{G}_{1})$ whose objects are simply connected proper subdomains in $\mbb{C}$:
$\mcal{G}_{0}=\left\{D\subsetneq \mbb{C}:\mbox{simply connected}\right\}$
and the set of morphisms of which is given by
$\mcal{G}_{1}(D_{1},D_{2})=\mrm{Aut}(D_{2})\backslash \mrm{Iso}(D_{1},D_{2})/\mrm{Aut}(D_{1})$, $D_{1},D_{2}\in \mcal{G}_{0}$.
It is obvious that each set $\mcal{G}_{1}(D_{1},D_{2})$ consists of a single element, which we denote by the symbol $(D_{1}\to D_{2})$.
Then the anti-action of $\mcal{G}$ on $\widetilde{\mcal{S}_{\gamma}}$ is given as follows.
We consider a mapping
$c_{\gamma}:\mcal{G}_{1}\times \widetilde{\mcal{S}_{\gamma}}\to \widetilde{\mcal{S}_{\gamma}}$
which is defined for pairs $((D_{1}\to D_{2}),h\in C^{\infty}_{\nabla}(D_{2})^{\prime}_{\gamma-\mrm{red}})$, $D_{1},D_{2}\in\mcal{G}_{0}$ as
\begin{equation*}
	c_{\gamma}((D_{1}\to D_{2}),h):=h\circ \psi+Q\log|\pr{\psi}|\in C^{\infty}_{\nabla}(D_{1})^{\prime}_{\gamma-\mrm{red}},\ \ h\in C^{\infty}_{\nabla}(D_{2})^{\prime}_{\gamma-\mrm{red}},
\end{equation*}
where $Q=\frac{2}{\gamma}+\frac{\gamma}{2}$ and $\psi:D_{1}\to D_{2}$ is a conformal equivalence.
It can be verified that the above definition does not depend on the choice of a conformal equivalence $\psi$.
Then the quotient $\widetilde{\mcal{S}_{\gamma}}/c_{\gamma}$ is just the collection of $\gamma$-pre-quantum surfaces $\mcal{S}_{\gamma}$.

Consequently, the canonical quotient map $\mcal{S}^{\mrm{univ}}\to\mcal{S}_{\gamma}$ is the composition
\begin{equation*}
	\mcal{S}^{\mrm{univ}}\xrightarrow{/\bigcup_{D}a_{D,\gamma}} \widetilde{\mcal{S}_{\gamma}}\xrightarrow{/c_{\gamma}}\mcal{S}_{\gamma}.
\end{equation*}
By uniformizing any domain $D$ to the upper half plane, the collection $\mcal{S}_{\gamma}$ of
all $\gamma$-pre-quantum surfaces is identified with the space of $\gamma$-reduced distributions
$C^{\infty}_{\nabla}(\mbb{H})^{\prime}_{\gamma-\mrm{red}}$ on $\mbb{H}$.

\subsection{With marked boundary points}
Let us move on to $\mcal{S}_{\gamma ,N+1}$, $\gamma\in (0,2)$, $N\in\mbb{Z}_{\ge 0}$.
We consider the following space
\begin{equation*}
	\mcal{S}^{\mrm{univ}}_{N+1}=\bigcup_{\substack{D\subsetneq \mbb{C}:\\ 1-\mrm{conn.}}}C_{\nabla}^{\infty}(D)^{\prime}\times \mrm{Conf}_{N+1}^{<}(\del D),
\end{equation*}
where $D\subsetneq \mbb{C}$ runs over all simply connected domains.
As we have seen, for each $D\subsetneq \mbb{C}$, the component $C_{\nabla}^{\infty}(D)^{\prime}$ has a right action $a_{\gamma,D}$ of $\mrm{Aut}(D)$
depending on the parameter $\gamma$.
The same group also acts on $\mrm{Conf}_{N+1}^{<}(\del D)$ from the left.
We write the diagonal action of $\mrm{Aut}(D)$ on $C_{\nabla}^{\infty}(D)^{\prime}\times \mrm{Conf}^{<}_{N+1}(\del D)$ as $a_{D,\gamma,N+1}$
and set
\begin{equation*}
	\widetilde{\mcal{S}_{\gamma,N+1}}:=\bigcup_{\substack{D\subsetneq \mbb{C}:\\ 1-\mrm{conn.}}}C_{\nabla}^{\infty}(D)^{\prime}\times_{a_{D,\gamma,N+1}} \mrm{Conf}^{<}_{N+1}(\del D).
\end{equation*}
It can be verified that the groupoid $\mcal{G}$ again acts on $\widetilde{\mcal{S}_{\gamma,N+1}}$.
Then the space $\mcal{S}_{\gamma, N+1}$ is constructed as
\begin{equation*}
	\mcal{S}^{\mrm{univ}}_{N+1}\xrightarrow{/\bigcup_{D}a_{D,\gamma.N+1}} \widetilde{\mcal{S}_{\gamma,N+1}}\xrightarrow{/\mcal{G}}\mcal{S}_{\gamma,N+1}.
\end{equation*}

Let us write each component of $\widetilde{\mcal{S}_{\gamma,N+1}}$ as
\begin{equation*}
	\widetilde{\mcal{S}_{\gamma,N+1}}(D):=C_{\nabla}^{\infty}(D)^{\prime}\times_{a_{D,\gamma,N+1}} \mrm{Conf}^{<}_{N+1}(\del D).
\end{equation*}
Because the action of the groupoid $\mcal{G}$ is simply transitive,
the space $\mcal{S}_{\gamma, N+1}$ is noncanonically isomorphic to $\widetilde{\mcal{S}_{\gamma,N+1}}(D)$ for every $D$.

For simplicity, let us identify $\mcal{S}_{\gamma,N+1}$ with $\widetilde{\mcal{S}_{\gamma,N+1}}(\mbb{H})$
and consider the following commutative diagram:
\begin{equation*}
\xymatrix{
	C^{\infty}_{\nabla}(\mbb{H})^{\prime}\times \mrm{Conf}^{<}_{N+1}(\del\mbb{H}) \ar@{->>}[r] \ar@{->>}[d]& \mrm{Conf}^{<}_{N+1}(\del\mbb{H}) \ar@{->>}[d]\\
	\widetilde{\mcal{S}_{\gamma,N+1}}(\mbb{H}) \ar@{->>}[r]^{\pi} & \mrm{Aut}(\mbb{H})\backslash\mrm{Conf}^{<}_{N+1}(\del \mbb{H}).
}
\end{equation*}
\begin{enumerate}
\item 	If $N\le 2$, the space $\mrm{Aut}(\mbb{H})\backslash\mrm{Conf}^{<}_{N+1}(\del\mbb{H})=\{\ast\}$ consists of a single element.
		Thus the space $\mcal{S}_{\gamma,N+1}$ is isomorphic to the fiber $\pi^{-1}(\ast)$.
		\begin{enumerate}
		\item 	If $N=0$, the point $\infty \in\mrm{Conf}^{<}_{1}(\del\mbb{H})$ is fixed by the subgroup 
				$\mrm{Aff}(\mbb{H})=\{z\mapsto az+b|a>0, b\in\mbb{R}\}$
				of affine transformations in $\mrm{Aut}(\mbb{H})$.
				Thus, the fiber over $[\infty]\in \mrm{Aut}(\mbb{H})\backslash\mrm{Conf}^{<}_{1}(\del\mbb{H})$ becomes
				\begin{equation*}
					\pi^{-1}[\infty]\simeq C^{\infty}_{\nabla}(\mbb{H})^{\prime}/a_{\mbb{H},\gamma}(\mrm{Aff}(\mbb{H})).
				\end{equation*}
		\item 	If $N=1$, the point $(0,\infty)\in \mrm{Conf}^{<}_{2}(\del\mbb{H})$ is fixed by the subgroup
				$\mrm{Scl}(\mbb{H})=\{z\mapsto az|a>0\}$ of scale transformations in $\mrm{Aut}(\mbb{H})$.
				Thus, the fiber over $[0,\infty]\in\mrm{Aut}(\mbb{H})\backslash\mrm{Conf}^{<}_{2}(\del\mbb{H})$ becomes
				\begin{equation*}
					\pi^{-1}[0,\infty]\simeq C^{\infty}_{\nabla}(\mbb{H})^{\prime}/a_{\mbb{H},\gamma}(\mrm{Scl}(\mbb{H})).
				\end{equation*}
		\item 	If $N=2$, the action of $\mrm{Aut}(\mbb{H})$ on $\mrm{Conf}^{<}_{3}(\del\mbb{H})$ is simply transitive.
				Thus, the fiber over $[0,1,\infty]\in\mrm{Aut}(\mbb{H})\backslash\mrm{Conf}^{<}_{3}(\del\mbb{H})$ becomes
				\begin{equation*}
					\pi^{-1}[0,1,\infty]\simeq C^{\infty}_{\nabla}(\mbb{H})^{\prime}.
				\end{equation*}
		\end{enumerate}
\item 	If $N\ge 3$, the space $\mrm{Aut}(\mbb{H})\backslash\mrm{Conf}^{<}_{N+1}(\del\mbb{H})$ is $(N-2)$-dimensional over $\mbb{R}$
		and each fiber becomes
		\begin{equation*}
			\pi^{-1}[x_{1},\dots, x_{N+1}]\simeq C^{\infty}_{\nabla}(\mbb{H})^{\prime},\ \ (x_{1},\dots, x_{N+1})\in\mrm{Conf}^{<}_{N+1}(\del \mbb{H}).
		\end{equation*}
\end{enumerate}

We show an alternative construction of $\widetilde{\mcal{S}^{\mrm{Rot}}_{\gamma,N+1}}(\mbb{H})$ used to define
a $\gamma$-QS-$(N+1)$-MBPs of the $(\bm{X},\alpha)$-standard type in Sect.\ \ref{sect:formulation_problem}.
Let $\mrm{Rot}(\mbb{H})\subset \mrm{Aut}(\mbb{H})$ be the subgroup consisting of rotations of $\mbb{H}$.
We consider the following object:
\begin{equation*}
	\widetilde{\mcal{S}^{\mrm{Rot}}_{\gamma,N+1}}(\mbb{H}):=C^{\infty}_{\nabla}(\mbb{H})^{\prime}\times_{a_{\mbb{H},\gamma,N+1}(\mrm{Rot}(\mbb{H}))} \mrm{Conf}^{<}_{N+1}(\del\mbb{H}),
\end{equation*}
which is a fiber bundle over $\mrm{Rot}(\mbb{H})\backslash \mrm{Conf}^{<}_{N+1}(\del\mbb{H})$.
By sending the $(N+1)$-st point to $\infty$, we have
\begin{equation*}
	\mrm{Rot}(\mbb{H})\backslash \mrm{Conf}^{<}_{N+1}(\del\mbb{H})\simeq \mrm{Conf}^{<}_{N}(\mbb{R}),
\end{equation*}
and the fiber over $\bm{x}=(x_{1},\dots, x_{N})\in\mrm{Conf}^{<}_{N}(\mbb{R})$ is isomorphic to $C^{\infty}_{\nabla}(\mbb{H})^{\prime}$.
Since $\mrm{Conf}^{<}_{N}(\mbb{R})$ is contractible, the fiber bundle
$\widetilde{\mcal{S}^{\mrm{Rot}}_{\gamma,N+1}}(\mbb{H})\to\mrm{Conf}^{<}_{N}(\mbb{R})$ is trivial:
\begin{equation*}
	\widetilde{\mcal{S}^{\mrm{Rot}}_{\gamma,N+1}}(\mbb{H})\simeq C^{\infty}_{\nabla}(\mbb{H})^{\prime}\times \mrm{Conf}^{<}_{N}(\mbb{R}),
\end{equation*}
reducing to Eq.\ (\ref{eq:def_rot_space}).
In this construction, it becomes clear that the surjection $\pi^{\infty}_{\gamma,N+1}$ defined in (\ref{eq:surj_rot_to_surf}) is just the quotient map.

\section{Driving processes of a multiple SLE from an auxiliary function}
\label{app:driving_processes}
A time change of the Dyson model also appeared in ~\cite{BauerBernardKytola2005} as a particular example of a set of driving processes.
In their work, in connection to CFT, a set of driving processes $\bm{X}_{t}=(X^{(1)}_{t},\dots,X^{(N)}_{t})$, $t\ge 0$, $N\in\mbb{Z}_{\ge 1}$,
was derived from an auxiliary function $Z(x_{1},\dots, x_{N})$ annihilated by operators
\begin{equation*}
	\mcal{D}_{i}=\frac{\kappa}{2}\del_{x_{i}}^{2}+2\sum_{j;j\neq i}\left(\frac{1}{x_{i}-x_{j}}\del_{x_{j}}+\frac{h_{\kappa}}{(x_{i}-x_{j})^{2}}\right),\ \ i=1,\dots,N,
\end{equation*}
where $h_{\kappa}=\frac{\kappa-6}{2\kappa}$ so that $\{\bm{X}_{t}=(X^{(1)}_{t},\dots, X^{(N)}_{t}):t\ge 0\}$ satisfies the system of SDEs
\begin{equation*}
	dX^{(i)}_{t}=\sqrt{\kappa}dB^{(i)}_{t}+\kappa(\del_{x_{i}}\log Z)(\bm{X}_{t})dt+\sum_{j;j\neq i}\frac{2dt}{X^{(i)}_{t}-X^{(j)}_{t}},\ \ t\ge 0,\ \ i=1,\dots, N.
\end{equation*}
Our set of driving processes (\ref{eq:driving_process_Schramm}) associated with functions (\ref{eq:function_canonical})
can be obtained by taking the following auxiliary function:
\begin{equation*}
	Z(x_{1},\dots, x_{N})=\prod_{i<j}(x_{i}-x_{j})^{\frac{2}{\kappa}}.
\end{equation*}
This auxiliary function is a correlation function of the Coulomb gas
and was argued in ~\cite{BauerBernardKytola2005} to be related to interfaces according to the criterion by Dub{\'e}dat \cite{Dubedat2007}.

It is also possible to directly derive the reverse flow of a multiple SLE in an analogous way as the one used in ~\cite{BauerBernardKytola2005}.
The central idea is to require correlation functions of a CFT evaluated by certain stochastic processes to be local martingales.
Let us begin with the reverse flow of a single SLE \cite{Lawler2009b,ViklundLawler2012}:
\begin{equation*}
	\frac{d}{dt} g^{\mrm{R}}_{t}(z)=-\frac{2}{g^{\mrm{R}}_{t}(z)-\sqrt{\kappa}B_{t}},\ \ t\ge 0,\ \ \ g^{\mrm{R}}_{0}(z)=z\in\mbb{H},
\end{equation*}
where $\{B_{t}:t\ge 0\}$ is a standard Brownian motion and $\kappa>0$ is a parameter.
If we define $f^{\mrm{R}}_{t}(z):=g^{\mrm{R}}_{t}(z)-\sqrt{\kappa}B_{t}$, then it satisfies
\begin{equation*}
	df^{\mrm{R}}_{t}(z)=-\frac{2dt}{f^{\mrm{R}}_{t}(z)}-\sqrt{\kappa}dB_{t},\ \ t\ge 0,\ \ z\in\mbb{H}.
\end{equation*}
Now let us recall the group theoretical formulation of SLE \cite{BauerBernard2003, BauerBernard2004} (see also ~\cite[Sect.\ II]{SK2018}),
in which this evolution can be enhanced to an operator-valued stochastic process acting on the representation space
of the Virasoro algebra.
The resulting stochastic process denoted as $R(f_{t}^{\mrm{R}})$ satisfies
\begin{equation*}
	R(f_{t}^{\mrm{R}})^{-1}dR(f_{t}^{\mrm{R}})=\left(2L_{-2}+\frac{\kappa}{2}L_{-1}^{2}\right)dt + \sqrt{\kappa}L_{-1}dB_{t},\ \ t\ge 0,\ \ \ R(f_{0})=\mrm{Id},
\end{equation*}
where $L_{n}$, $n\in\mbb{Z}$ are the standard Virasoro generators.
By the standard argument (see also ~\cite{Fukusumi2017}), for a highest weight vector $\ket{c,h}$ for the Virasoro algebra,
the stochastic process $R(f_{t}^{\mrm{R}})\ket{c,h}$, $t\ge 0$, is a local martingale if the highest weight is chosen as
\begin{equation*}
	c=c^{\mrm{R}}_{\kappa}=1+\frac{3(\kappa+4)^{2}}{2\kappa},\ \ \ h=h_{\kappa}^{\mrm{R}}=-\frac{\kappa+6}{2\kappa}.
\end{equation*}
Note that the same local martingale is also expressed as
\begin{equation*}
R(f_{t}^{\mrm{R}})\ket{c_{\kappa}^{\mrm{R}},h_{\kappa}^{\mrm{R}}}=R(g_{t}^{\mrm{R}})\Psi_{h_{\kappa}^{\mrm{R}}}(\sqrt{\kappa}B_{t})\ket{0},
\end{equation*}
where $\Psi_{h_{\kappa}^{\mrm{R}}}$ is the primary field of conformal weight $h_{\kappa}^{\mrm{R}}$.

The multiple analogue of the reverse flow of an SLE would require a CFT datum as an input.
Let us consider the following correlation function:
\begin{equation*}
	Z(x_{1},\cdots,x_{N})=\braket{h_{\infty}|\Psi_{h_{\kappa}^{\mrm{R}}}(x_{1})\cdots \Psi_{h_{\kappa}^{\mrm{R}}}(x_{N})|0},
\end{equation*}
the central charge for which is $c_{\kappa}^{\mrm{R}}$.
It follows from the existence of a singular vector $(2L_{-2}+\frac{\kappa}{2}L_{-1}^{2})\ket{c_{\kappa}^{\mrm{R}},h_{\kappa}^{\mrm{R}}}$ that
the correlation function $Z(x_{1},\cdots,x_{N})$ solves the following system of differential equations:
\begin{equation}
	\label{eq:annihilating_condition}
	\mcal{D}_{i}^{\mrm{R}}Z=0,\ \ \  i=1,\dots, N,
\end{equation}
where 
\begin{equation*}
	\mcal{D}_{i}^{\mrm{R}}=\frac{\kappa}{2}\del_{x_{i}}^{2}-2\sum_{j;j\neq i}\left(\frac{1}{x_{j}-x_{i}}\del_{x_{j}}-\frac{h_{\kappa}^{\mrm{R}}}{(x_{j}-x_{i})^{2}}\right),\ \ \ i=1,\cdots, N.
\end{equation*}

\begin{exam}
The following function gives a solution to the system of differential equations (\ref{eq:annihilating_condition}):
\begin{equation}
\label{eq:auxiliary_reverse}
	Z(x_{1},\cdots,x_{N})=\prod_{i<j}(x_{i}-x_{j})^{-\frac{2}{\kappa}}.
\end{equation}
\end{exam}

As in the case of the forward flow of a multiple SLE in ~\cite{BauerBernardKytola2005}, we define driving processes as follows:
\begin{defn}
Let $\{B_{t}^{(i)}:t\ge 0\}_{i=1}^{N}$ be mutually independent standard Brownian motions,
and $Z(x_{1},\cdots,x_{N})$ be a solution to the system of differential equations (\ref{eq:annihilating_condition}).
The associated set of driving processes $\{\bm{Y}^{(i)}_{t}=(Y_{t}^{(1)},\dots, Y^{(N)}_{t}):t\ge 0\}$ is defined as a solution to the system of SDEs:
\begin{equation*}
	dY_{t}^{(i)}=\sqrt{\kappa}dB_{t}^{(i)}+\kappa (\del_{x_{i}} \log Z)(\bm{Y}_{t})dt-\sum_{j;j\neq i}\frac{2dt}{Y_{t}^{(i)}-Y_{t}^{(j)}},\ \ i=1,\dots, N,\ \ t\ge 0.
\end{equation*}
\end{defn}

\begin{exam}
For the function given in (\ref{eq:auxiliary_reverse}), the associated set of driving processes satisfies (\ref{eq:driving_process_reverse}).
Thus the reverse flow of multiple SLE considered in Sect.\ \ref{sect:coupling_reverse} is a particular example of ones defined below.
\end{exam}

Associated with these data, we define the reverse flow of a multiple SLE as follows:
\begin{defn}
Let $Z(x_{1},\cdots,x_{N})$ be a solution to the system of differential equations (\ref{eq:annihilating_condition})
and $\{\bm{Y}_{t}:t\ge 0\}$ be the associated driving processes.
The reverse flow of an multiple SLE$_{\kappa}$ associated with these data is a stochastic process $\{g^{\mrm{R}}_{t}(\cdot)\}_{t\ge 0}$ solving
the following multiple version of the reverse flow:
\begin{equation*}
	\frac{d}{dt}g^{\mrm{R}}_{t}(z)=-\sum_{i=1}^{N}\frac{2}{g^{\mrm{R}}_{t}(z)-Y_{t}^{(i)}},\ \ t\ge 0,\ \ \ g^{\mrm{R}}_{0}(z)=z.
\end{equation*}
\end{defn}

The reverse flow defined above is connected to CFT in the following sense:
\begin{thm}
Let $Z(x_{1},\cdots,x_{N})$ be a solution to the system of differential equations (\ref{eq:annihilating_condition}),
$\{\bm{Y}_{t}:t\ge 0\}$ be the associated set of driving processes,
and let $\{g_{t}^{\mrm{R}}(\cdot)\}_{t\ge 0}$ be the corresponding reverse flow of an multiple SLE.
Then the stochastic process
\begin{equation*}
	M_{t}:=\frac{1}{Z(\bm{Y}_{t})}R(g_{t}^{\mrm{R}})\Psi_{h_{\kappa}^{\mrm{R}}}(Y_{t}^{(1)})\cdots \Psi_{h_{\kappa}^{\mrm{R}}}(Y_{t}^{(N)})\ket{0},\ \ t\ge 0,
\end{equation*}
is a local martingale on a representation space of the Virasoro algebra.
\end{thm}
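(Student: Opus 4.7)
The plan is to apply Ito's formula to $M_t$ and to verify that the drift vanishes as a direct consequence of the annihilating condition (\ref{eq:annihilating_condition}). The argument parallels the derivation given in \cite{BauerBernardKytola2005} for the forward flow, with the conformal weight and relevant signs adjusted for the reverse flow.

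First I would establish the operator-valued stochastic differential for $R(g_t^{\mrm{R}})$. Since $\{g_t^{\mrm{R}}\}_{t\ge 0}$ satisfies a pathwise deterministic ODE in the parameters $\bm{Y}_t$, a multi-point extension of the Bauer-Bernard formalism gives
\begin{equation*}
R(g_t^{\mrm{R}})^{-1}dR(g_t^{\mrm{R}}) = -2\sum_{i=1}^{N}\ell(Y_t^{(i)})\,dt,
\end{equation*}
where $\ell(y)=\sum_{n\ge 0}y^{n}L_{-n-2}$ is the Virasoro operator representing the vector field $\frac{1}{z-y}\partial_{z}$ at infinity. This differential carries no Brownian part because $g_t^{\mrm{R}}$ depends on the Brownian motions only through the parameters $Y_t^{(i)}$. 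Using the primary-field commutation relations $[L_n,\Psi_h(y)]=(y^{n+1}\partial_y+(n+1)h y^{n})\Psi_h(y)$, commuting $\ell(Y_t^{(i)})$ past $\Psi_{h_\kappa^{\mrm{R}}}(Y_t^{(j)})$ for $j\ne i$ reproduces exactly the kernels $\frac{1}{Y_t^{(i)}-Y_t^{(j)}}\partial_{y_j}+\frac{h_\kappa^{\mrm{R}}}{(Y_t^{(i)}-Y_t^{(j)})^{2}}$ appearing in $\mathcal{D}_i^{\mrm{R}}$, while the diagonal contribution yields $L_{-2}$ acting on the $i$-th primary, which by the reverse-flow singular vector $(2L_{-2}+\frac{\kappa}{2}L_{-1}^{2})\ket{c_\kappa^{\mrm{R}},h_\kappa^{\mrm{R}}}=0$ can be replaced by $-\frac{\kappa}{4}\partial_{y_i}^{2}$, producing the second-derivative term of $\mathcal{D}_i^{\mrm{R}}$.

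Applying Ito to the moving insertions $\Psi_{h_\kappa^{\mrm{R}}}(Y_t^{(i)})$ and to the scalar prefactor $1/Z(\bm{Y}_t)$ using the SDE for $\bm{Y}_t$ contributes the Brownian pieces together with additional drift terms: the Girsanov-type drift $\kappa(\partial_{x_i}\log Z)$ in $dY_t^{(i)}$ combines with the cross-terms between $dZ$ and $d\Psi$, while the non-colliding interaction drift $-\sum_{j\ne i}\frac{2}{Y_t^{(i)}-Y_t^{(j)}}$ absorbs the off-diagonal contributions from $\ell(Y_t^{(i)})$ obtained above. After this reorganization the total drift becomes proportional to $\sum_{i=1}^{N}(\mathcal{D}_i^{\mrm{R}}Z)(\bm{Y}_t)$, which vanishes by hypothesis, so $M_t$ is a local martingale.

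The main obstacle will be the careful bookkeeping of signs and numerical coefficients, in particular ensuring that the combination produces the reverse-flow operator $\mathcal{D}_i^{\mrm{R}}$ rather than its forward-flow analogue $\mathcal{D}_i$; the difference originates from the opposite sign of the ODE for $g_t^{\mrm{R}}$ together with the reverse-flow singular vector, and these two inputs must conspire consistently for the cancellation to close.
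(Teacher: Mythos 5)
The paper states this theorem without supplying a proof, so there is no "paper's own proof" to compare against; the theorem is clearly intended to be the reverse-flow analogue of the argument in~\cite{BauerBernardKytola2005}, which is exactly the route you take. Your plan --- It\^o applied to $M_t$, the operator-valued SDE for $R(g_t^{\mrm{R}})$ with no Brownian part (since $g_t^{\mrm{R}}$ is pathwise of bounded variation once $\bm{Y}_t$ is given), commutation of Virasoro modes past the moving primaries, the level-two singular vector $\bigl(2L_{-2}+\tfrac{\kappa}{2}L_{-1}^{2}\bigr)\ket{c_\kappa^{\mrm{R}},h_\kappa^{\mrm{R}}}$ to replace $L_{-2}$ by $-\tfrac{\kappa}{4}\partial^{2}$, and the BPZ annihilation conditions $\mcal{D}_i^{\mrm{R}}Z=0$ killing the drift --- is the correct structure.

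Two specific points in the write-up need fixing before the cancellation actually closes, beyond what you flagged loosely as ``sign bookkeeping.'' First, the anti-homomorphism property of $R$ reverses the sign of the vector field: the paper's single-curve reverse formula has $df^{\mrm{R}}_t=-\tfrac{2\,dt}{f^{\mrm{R}}_t}-\sqrt{\kappa}\,dB_t$ paired with $R(f^{\mrm{R}}_t)^{-1}dR(f^{\mrm{R}}_t)=(2L_{-2}+\tfrac{\kappa}{2}L_{-1}^{2})dt+\sqrt{\kappa}L_{-1}dB_t$, so the coefficient of $L_{-2}$ is $+2$ while the ODE has $-2$. With your normalization $\ell(y)=\sum_{n\ge0}y^{n}L_{-n-2}$ (so that $\ell(0)=L_{-2}$), the multi-point version should read $R(g_t^{\mrm{R}})^{-1}dR(g_t^{\mrm{R}})=+2\sum_{i}\ell(Y_t^{(i)})\,dt$, not $-2$. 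Second, the commutator is $[\ell(y'),\Psi_h(y)]=\bigl(\tfrac{1}{y-y'}\partial_y+\tfrac{h}{(y-y')^{2}}\bigr)\Psi_h(y)$, with $y-y'$ in the denominator, not $y'-y$. Once both are corrected, the $\partial_{y_j}$ pieces produced by commuting $2\ell(Y^{(i)})$ past $\Psi(Y^{(j)})$ are $+\tfrac{2}{Y^{(j)}-Y^{(i)}}\partial_{Y^{(j)}}$, and these cancel exactly against the interaction drift $-\tfrac{2}{Y^{(j)}-Y^{(i)}}\partial_{Y^{(j)}}$ coming from $dY^{(j)}_t$ in $d\Psi(Y^{(j)}_t)$ --- so your phrase ``absorbs the off-diagonal contributions'' is the right picture, but is in tension with your earlier claim that those same commutator kernels ``reproduce exactly'' the off-diagonal part of $\mcal{D}_i^{\mrm{R}}$. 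What in fact survives to form $\mcal{D}_i^{\mrm{R}}Z/Z$ is the combination of the second-order It\^o terms, the $h/(Y^{(j)}-Y^{(i)})^{2}$ residues, the Girsanov drift $\kappa\partial_{x_i}\log Z$, and the It\^o differential of the prefactor $1/Z$, arranged $i$ by $i$; it is cleaner to exhibit the drift as a sum over $i$ of terms each individually proportional to $(\mcal{D}_i^{\mrm{R}}Z)(\bm{Y}_t)$ rather than to the aggregate $\sum_i\mcal{D}_i^{\mrm{R}}Z$. With these sign and bookkeeping corrections the argument does close.
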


\bibliographystyle{alpha}
\bibliography{sle_gff}

\newcommand{\etalchar}[1]{$^{#1}$}
\begin{thebibliography}{CDCH{\etalchar{+}}14}

\bibitem[AGZ10]{AndersonGuionnetZeitouni2010}
G.~W. Anderson, A.~Guionnet, and O.~Zeitouni.
\newblock {\em An Introduction to Random Matrices}.
\newblock Cambridge University Press, Cambridge, 2010.

\bibitem[BB03]{BauerBernard2003}
M.~Bauer and D.~Bernard.
\newblock Conformal field theories of stochastic {Loewner} evolutions.
\newblock {\em Commun. Math. Phys.}, 239:493--521, 2003.

\bibitem[BB04]{BauerBernard2004}
M.~Bauer and D.~Bernard.
\newblock Conformal transformations and the {SLE} partition function
  martingale.
\newblock {\em Ann. Henri Poincar{\'e}}, 5:289--326, 2004.

\bibitem[BBK05]{BauerBernardKytola2005}
M.~Bauer, D.~Bernard, and K.~Kyt{\"o}l{\"a}.
\newblock Multiple {Schramm--Loewner} evolutions and statistical mechanics
  martingales.
\newblock {\em J. Stat. Phys.}, 120:1125--1163, 2005.

\bibitem[Ber16]{Berestycki2016}
N.~Berestycki.
\newblock Introduction to the {Gaussian} free field and {Liouville} quantum
  gravity, 2016.
\newblock available at
  {https://homepage.univie.ac.at/nathanael.berestycki/articles.html}.

\bibitem[BPW18]{BeffaraPeltolaWu2018}
V.~Beffara, E.~Peltola, and H.~Wu.
\newblock On the uniqueness of global multiple {SLE}, 2018.
\newblock arXiv:1801.07699.

\bibitem[BPZ84]{BelavinPolyakovZamolodchikov1984}
A.~A. Belavin, A.~M. Polyakov, and A.~B. Zamolodchikov.
\newblock Infinite conformal symmetry in two-dimensional quantum field theory.
\newblock {\em Nucl. Phys. B}, 241:333--380, 1984.

\bibitem[Bru91]{Bru1991}
M.~F. Bru.
\newblock {Wishart} process.
\newblock {\em J. Theor. Probab.}, 4:725--751, 1991.

\bibitem[Car03a]{Cardy2003a}
J.~Cardy.
\newblock Stochastic {Loewner} evolution and {Dyson's} circular ensembles.
\newblock {\em J. Phys. A: Math. Gen.}, 36:L379--L386, 2003.

\bibitem[Car03b]{Cardy2003b}
J.~Cardy.
\newblock Corrigendum: {Stochastic Loewner} evolution and {Dyson's} circular
  ensembles.
\newblock {\em J. Phys. A: Math. Gen.}, 36:12343, 2003.

\bibitem[Car04]{Cardy2004}
J.~Cardy.
\newblock {Calogero--Sutherland} model and bulk-boundary correlations in
  conformal field theory.
\newblock {\em Phys. Lett. B}, 582:121--126, 2004.

\bibitem[CDCH{\etalchar{+}}14]{ChelkakDuminil-CopinHonglerKemppainenSmirnov2014}
D.~Chelkak, H.~Duminil-Copin, C.~Hongler, A.~Kemppainen, and S.~Smirnov.
\newblock Convergence of {Ising} interfaces to {Schramm's SLE} curves.
\newblock {\em Comptes Rendus Mathematique}, 352:157--161, 2014.

\bibitem[CL97]{CepaLepingle1997}
E.~C{\'e}pa and D.~L{\'e}pingle.
\newblock Diffusing particles with electrostatic repulsion.
\newblock {\em Probab. Theory Relat. Fields}, 107:429--449, 1997.

\bibitem[DKRV16]{DavidKupiainenRhodesVargas2016}
F.~David, A.~Kupiainen, R.~Rhodes, and V.~Vargas.
\newblock {Liouville} quantum gravity on the {Riemann} sphere.
\newblock {\em Commun. Math. Phys.}, 342:869--907, 2016.

\bibitem[dMHS18]{delMonacoHottaSchleissinger2018}
A.~del Monaco, I.~Hotta, and S.~Schleissinger.
\newblock Tightness results for infinite-slit limits of the chordal {Loewner}
  equation.
\newblock {\em Comput. Methods Funct. Theory}, 18:9--33, 2018.

\bibitem[DMS14]{DuplantierMillerSheffield2014}
B.~Duplantier, J.~Miller, and S.~Sheffield.
\newblock {Liouville} quantum gravity as a mating of trees, 2014.
\newblock arXiv:1409.7055.

\bibitem[dMS16]{delMonacoSchleissinger2016}
A.~del Monaco and S.~Schleissinger.
\newblock Multiple {SLE} and the complex {Burgers} equation.
\newblock {\em Math. Nachr.}, 289:2007--2018, 2016.

\bibitem[DS11]{DuplantierSheffield2011}
B.~Duplantier and S.~Sheffield.
\newblock {Liouville} quantum gravity and {KPZ}.
\newblock {\em Invent. Math.}, 185:333--393, 2011.

\bibitem[Dub07]{Dubedat2007}
J.~Dub{\'e}dat.
\newblock Commutation relations for {Schramm--Loewner} evolutions.
\newblock {\em Commun. Pure and Appl. Math.}, LX:1792--1847, 2007.

\bibitem[Dub09]{Dubedat2009}
J.~Dub\'{e}dat.
\newblock {SLE} and the free field: Partition functions and couplings.
\newblock {\em J. Amer. Math. Soc.}, 22:995--1054, 2009.

\bibitem[Dys62]{Dyson1962}
F.~J. Dyson.
\newblock A {Brownian}-motion model for the eigenvalues of a random matrix.
\newblock {\em J. Math. Phys.}, 3:1191--1198, 1962.

\bibitem[For10]{Forrester2010}
P.~J. Forrester.
\newblock {\em Log-gases and Random Matrices}.
\newblock London Math. Soc. Monographs. Princeton University Press, Princeton,
  2010.

\bibitem[Fuk17]{Fukusumi2017}
Y.~Fukusumi.
\newblock Time reversing procedure of {SLE} and 2d gravity, 2017.
\newblock arXiv:1710.08670.

\bibitem[GM13]{GraczykMalecki2013}
P.~Graczyk and J.~Ma{\l}ecki.
\newblock Multidimensional {Yamada--Watanabe} theorem and its appications to
  particle systems.
\newblock {\em J. Math. Phys.}, 54:021503, 2013.

\bibitem[GM14]{GraczykMalecki2014}
P.~Graczyk and J.~Ma{\l}ecki.
\newblock Strong solutions of non-collidng particle systems.
\newblock {\em Electron. J. Probab.}, 19:1--21, 2014.

\bibitem[GMS19]{GwynneMillerSheffield2019}
E.~Gwynne, J.~Miller, and S.~Sheffield.
\newblock Harmonic functions on mated-{CRT} maps.
\newblock {\em Electron. J. Probab.}, 24:58, 2019.

\bibitem[GMS20]{GwynneMillerSheffield2020}
E.~Gwynne, J.~Miller, and S.~Sheffield.
\newblock The {Tutte} embedding of the {Poisson}--{Voronoi} tessellation of the
  {Brownian} disk converges to $\sqrt{8/3 }$-{Liouville} quantum gravity.
\newblock {\em Commun. Math. Phys.}, 374:735--784, 2020.

\bibitem[Gra07]{Graham2007}
K.~Graham.
\newblock On multiple {Schramm--Loewner} evolutions.
\newblock {\em J. Stat. Mech.}, 2007:P03008, 2007.

\bibitem[Hid80]{Hida1980}
T.~Hida.
\newblock {\em Brownian Motion}, volume~11 of {\em Applications of
  Mathematics}.
\newblock Springer-Verlag New York Heidelberg Berlin, 1980.

\bibitem[HK18]{HottaKatori2018}
I.~Hotta and M.~Katori.
\newblock Hydrodynamic limit of multiple {SLE}.
\newblock {\em J. Stat. Phys.}, 171:166--188, 2018.

\bibitem[HRV18]{HuangRhodesVargas2018}
Y.~Huang, R.~Rhodes, and V.~Vargas.
\newblock {Liouville} quantum gravity on the unit disk.
\newblock {\em Ann. Inst. H. Poincar{\'e} Probab. Statist.}, 54:1694--1730,
  2018.

\bibitem[HS19]{HoldenSun2019}
N.~Holden and X.~Sun.
\newblock Convergence of uniform triangulations under the {Cardy} embedding,
  2019.
\newblock arXiv:1905.13207.

\bibitem[IW89]{IkedaWatanabe1989}
N.~Ikeda and S.~Watanabe.
\newblock {\em Stochastic Differential Equations and Diffusion Processes}.
\newblock North-Holland/Kodansha, Tokyo, 2nd edition, 1989.

\bibitem[Kat15]{Katori2015}
M.~Katori.
\newblock {\em Bessel Processes, Schramm--Loewner Evolution, and the Dyson
  Model}, volume~11 of {\em SpringerBriefs in Mathematical Physics}.
\newblock Springer, 2015.

\bibitem[KK20]{KatoriKoshida2020}
M.~Katori and S.~Koshida.
\newblock Gaussian free fields coupled with multiple {SLEs} driven by
  stochastic log-gases, 2020.
\newblock arXiv:2001.03079.

\bibitem[KM13]{KangMakarov2013}
N.-G. Kang and N.~G. Makarov.
\newblock {\em Gaussian Free Field and Conformal Field Theory}.
\newblock Ast{\'e}risque. American Mathematical Society, 2013.

\bibitem[KO18]{KawamotoOsada2018}
Y.~Kawamoto and H.~Osada.
\newblock Finite-particle approximations for interacting {Brownian} particles
  with logarithmic potential.
\newblock {\em J. Math. Soc. Japan}, 70:921--952, 2018.

\bibitem[Kos18]{SK2018}
S.~Koshida.
\newblock Local martingales associated with {Schramm--Loewner} evolutions with
  internal symmetry.
\newblock {\em J. Math. Phys.}, 59:101703, 2018.

\bibitem[KSS68]{KufarevSobolevSporyseva1968}
P.~P. Kufarev, V.~V. Sobolev, and L.~V. Spory\v{s}eva.
\newblock A certain method of investigation of extremal problems for functions
  that are univalent in the half-plane.
\newblock {\em Trudy Tomsk. Gos. Univ. Ser. Meh.-Mat.}, 200:142--164, 1968.

\bibitem[KT04]{KatoriTanemura2004}
M.~Katori and H.~Tanemura.
\newblock Symmetry of matrix-valued stochastic processes and noncolliding
  diffusion particle systems.
\newblock {\em J. Math. Phys.}, 45:3058--3085, 2004.

\bibitem[KT10]{KatoriTanemura2010}
M.~Katori and H.~Tanemura.
\newblock Non-equilibrium dynamics of {Dyson's} model with an infinite number
  of particles.
\newblock {\em Commun. Math. Phys.}, 293:469--497, 2010.

\bibitem[Law09a]{Lawler2009b}
G.~F. Lawler.
\newblock Multifractal analysis of the reverse flow for the {Schramm--Loewner}
  evolution.
\newblock {\em Progress in Probability}, 61:73--107, 2009.

\bibitem[Law09b]{Lawler2009a}
G.F. Lawler.
\newblock Conformal invariance and 2{D} statistical physics.
\newblock {\em Bull. Amer. Math. Soc. (N.S.)}, 46:35--54, 2009.

\bibitem[L{\"o}w23]{Loewner1923}
K.~L{\"o}wner.
\newblock Untersuchungen {\"u}ber schlichte konforme {Abbildungen} des
  {Einheitskreises}. {I}.
\newblock {\em Mathematische Annalen}, 89:103--121, 1923.

\bibitem[MS16a]{MillerSheffield2016a}
J.~Miller and S.~Sheffield.
\newblock Imaginary geometry {I}: interacting {SLEs}.
\newblock {\em Probab. Theory Relat. Fields}, 164:553--705, 2016.

\bibitem[MS16b]{MillerSheffield2016b}
J.~Miller and S.~Sheffield.
\newblock Imaginary geometry {II}: reversibility of
  {SLE}$_{\kappa}(\rho_{1},\rho_{2})$ for $\kappa\in (0,4)$.
\newblock {\em Ann. Prob.}, 44:1647--1722, 2016.

\bibitem[MS16c]{MillerSheffield2016c}
J.~Miller and S.~Sheffield.
\newblock Imaginary geometry {III}: reversibility of {SLE}$_{\kappa}$ for
  $\kappa\in (4,8)$.
\newblock {\em Ann. Math.}, 184:455--486, 2016.

\bibitem[MS17]{MillerSheffield2017}
J.~Miller and S.~Sheffield.
\newblock Imaginary geometry {IV}: interior rays, whole-plane reversibility,
  and space-filling trees.
\newblock {\em Probab. Theory Relat. Fields}, 169:729--869, 2017.

\bibitem[Osa12]{Osada2012}
H.~Osada.
\newblock Inifinite-dimensional stochastic differential equations related to
  random matrices.
\newblock {\em Probab. Theory Relat. Fields}, 153:471--509, 2012.

\bibitem[Osa13]{Osada2013}
H.~Osada.
\newblock Interacting {Brownian} motions in infinite dimensions with
  logarithmic interaction potentials.
\newblock {\em Ann. of Probab.}, 41:1--49, 2013.

\bibitem[OT16]{OsadaTanemura2016}
H.~Osada and H.~Tanemura.
\newblock Strong {Markov} property of determinantal processes with extended
  kernels.
\newblock {\em Stoch. Proc. Appl.}, 126:186--208, 2016.

\bibitem[OT20]{OsadaTanemura2020}
H.~Osada and H.~Tanemura.
\newblock Infinite-dimensional stochastic differential equations and tail
  $\sigma$-fields.
\newblock {\em Probab. Theory Relat. Fields}, 177:1137--1242, 2020.

\bibitem[Pol81a]{Polyakov1981a}
A.~M. Polyakov.
\newblock Quantum geometry of bosonic strings.
\newblock {\em Phys. Lett. B}, 103:207--210, 1981.

\bibitem[Pol81b]{Polyakov1981b}
A.~M. Polyakov.
\newblock Quantum geometry of fermionic strings.
\newblock {\em Phys. Lett. B}, 103:211--213, 1981.

\bibitem[PW19]{PeltolaWu2019}
E.~Peltola and H.~Wu.
\newblock Global and local multiple {SLE} for $\kappa \le 4$ and connection
  probabilities for level line of {GFF}.
\newblock {\em Commun. Math. Phys.}, 366:469--536, 2019.

\bibitem[QW18]{QianWerner2018}
W.~Qian and W.~Werner.
\newblock Coupling the {Gaussian} free fields with free and with zero boundary
  conditions via common level lines.
\newblock {\em Commun. Math. Phys.}, 361:53--80, 2018.

\bibitem[RS93]{RogersShi1993}
L.~C.~G. Rogers and Z.~Shi.
\newblock Interacting {Brownian} particles and the {Wigner} law.
\newblock {\em Probab. Theory Relat. Fields}, 95:555--570, 1993.

\bibitem[RS05]{RohdeSchramm2005}
S.~Rohde and O.~Schramm.
\newblock Basic properties of {SLE}.
\newblock {\em Ann. Math.}, 161:883--924, 2005.

\bibitem[RS17]{RothSchleissinger2017}
O.~Roth and S.~Schleissinger.
\newblock The {Schramm--Loewner} equation for multiple slits.
\newblock {\em J. Anal. Math.}, 131:73--99, 2017.

\bibitem[RV17]{RhodesVargas2017}
R.~Rhodes and V.~Vargas.
\newblock Gaussian multiplicative chaos and {Liouville} quantum gravity.
\newblock In {\em Stochastic Processes and Random Matrices: Lecture Notes of
  the Les Houches Summer School: Volume 104, July 2015}. Oxford University
  Press, Oxford, 2017.

\bibitem[Sch00]{Schramm2000}
O.~Schramm.
\newblock Scaling limits of loop-eraced random walks and uniform spanning
  trees.
\newblock {\em Israel J. Math.}, 118:221--288, 2000.

\bibitem[Sch13]{Schleissinger2013}
S.~Schleissinger.
\newblock {\em Embedding problems in {Loewner} theory}.
\newblock PhD thesis, Julius-Maximilians-Universit{\"a}t W{\"u}uzburg, 2013.
\newblock arXiv:1501.04507.

\bibitem[She07]{Sheffield2007}
S.~Sheffield.
\newblock Gaussian free fields for mathematicians.
\newblock {\em Probab. Theory Relat. Fields}, 139:521--541, 2007.

\bibitem[She16]{Sheffield2016}
S.~Sheffield.
\newblock Conformal weldings of random surfaces: {SLE} and the quantum gravity
  zipper.
\newblock {\em Ann. Prob.}, 44:3474--3545, 2016.

\bibitem[Smi01]{Smirnov2001}
S.~Smirnov.
\newblock Critical percolation in the plane: conformal invariance, {Cardy's}
  formula, scaling limits.
\newblock {\em C. R. Acad. Sci. Paris}, 333:239--244, 2001.

\bibitem[SS13]{SchrammSheffield2013}
O.~Schramm and S.~Sheffield.
\newblock A contour line of the continuum {Gaussian} free field.
\newblock {\em Probab. Theory Relat. Fields}, 157:47--80, 2013.

\bibitem[SW05]{SchrammWilson2005}
O.~Schramm and D.~B. Wilson.
\newblock {SLE} coordinate changes.
\newblock {\em New York J. Math.}, 11:659--669, 2005.

\bibitem[Tak14]{Takebe2014}
T.~Takebe.
\newblock Dispersionless {BKP} hierarchy and quadrant {L{\"o}wner} equation.
\newblock {\em SIGMA}, 10:023, 2014.

\bibitem[Tsa16]{Tsai2016}
L.-C. Tsai.
\newblock Infinite dimensional stochastic differential equations for {Dyson's}
  model.
\newblock {\em Probab. Theory Relat. Fields}, 166:801--850, 2016.

\bibitem[VL12]{ViklundLawler2012}
F.~J. Viklund and G.~F. Lawler.
\newblock Almost sure multifractal spectrum for the tip of an {SLE} curve.
\newblock {\em Acta Math.}, 209:265--322, 2012.

\bibitem[Wer04]{Werner2004a}
W.~Werner.
\newblock Random planar curves and {Schramm--Loewner} evolutions.
\newblock In {\em Lectures on Probability Theory and Statistics. Lecture Notes
  in Math. 1840}, pages 107--195. Springer, Berlin, 2004.

\end{thebibliography}
\end{document}